\definecolor{red}{rgb}{1,0,0}
\def\red{\color{red}}
\definecolor{blu}{rgb}{0,0,1}
\definecolor{qqqqff}{rgb}{0.,0.,1.}
\def\noi{\noindent}
\newcommand{\Gc}{\overline{G}}
\newcommand{\pd}{\gamma_P}
\newcommand{\Z}{\operatorname{Z}}
\newcommand{\maxpd}{\mathcal{T}}
\newcommand{\pbar}{\bar p}
\newcommand{\tw}{\operatorname{tw}}
\newcommand{\diam}{\operatorname{diam}}
\newtheorem{thm}{Theorem}[section]
\newtheorem{cor}[thm]{Corollary}
\newtheorem{lem}[thm]{Lemma}
\newtheorem{prop}[thm]{Proposition}
\newtheorem{conj}[thm]{Conjecture}
\newtheorem{obs}[thm]{Observation}
\newtheorem{quest}[thm]{Question}
\theoremstyle{definition}
\newtheorem{rem}[thm]{Remark}
\theoremstyle{definition}
\theoremstyle{definition}
\newtheorem{ex}[thm]{Example}
\newcommand{\bit}{\begin{itemize}}
\newcommand{\eit}{\end{itemize}}
\newcommand{\ben}{\begin{enumerate}}
\newcommand{\een}{\end{enumerate}}
\newcommand{\beq}{\begin{equation}}
\newcommand{\eeq}{\end{equation}}
\newcommand{\bea}{\begin{eqnarray*}}
\newcommand{\eea}{\end{eqnarray*}}
\newcommand{\bpf}{\begin{proof}}
\newcommand{\epf}{\end{proof}}
\newcommand{\lc}{\left\lceil}
\newcommand{\rc}{\right\rceil}
\newcommand{\lf}{\left\lfloor}
\newcommand{\rf}{\right\rfloor}
\title{Note on Nordhaus-Gaddum problems for power domination}%\thanks{Research partially supported by NSF DMS 1239280}}
\author{Katherine F. Benson\thanks{Department of Mathematics and Physics, Westminster College, Fulton, MO 65251, USA (katie.benson@westminster-mo.edu).} \and Daniela Ferrero\thanks{Department of Mathematics, Texas State University, San Marcos, TX 78666, USA (dferrero@txstate.edu).} \and Mary Flagg\thanks{Department of Mathematics, Computer Science and Cooperative Engineering, University of St. Thomas, 3800 Montrose, Houston, TX 77006, USA (flaggm@stthom.edu).} \and Veronika Furst\thanks{Department of Mathematics, Fort Lewis College, Durango, CO 81301,
USA (furst$\_$v@fortlewis.edu).} \and Leslie Hogben\thanks{Department of Mathematics, Iowa State University,
Ames, IA 50011, USA (LHogben@iastate.edu) and American Institute of Mathematics, 600 E. Brokaw Road, San Jose, CA 95112, USA
(hogben@aimath.org).} \and Violeta Vasilevska\thanks{Department of Mathematics, Utah Valley University, Orem, UT, 84058, USA (Violeta.Vasilevska@uvu.edu).} }
\begin{document}
\maketitle

%\linenumbers
\vspace{-10pt}
\begin{abstract} The upper and lower Nordhaus-Gaddum bounds over all graphs for the power domination number follow from known bounds on the domination number and examples.  In this note we improve the upper sum bound for the power domination number substantially for graphs having the property that both the graph and its complement must be connected.  For these graphs, our bound is tight and is also significantly better than the corresponding bound for domination number.   
We also   improve the product upper bound for the power domination number  for graphs with certain properties.  \end{abstract}

\noi {\bf Keywords} power domination, domination, zero forcing, Nordhaus-Gaddum%,  minor monotone floor.

\noi{\bf AMS subject classification} 05C69, 05C57 

%%%%%%%%%%%%%%%%%%%%%%%%%%%%%%%%%%%%%%%%%

\section{Introduction}\label{sintro}
 
The study of the power domination number of a graph arose from the question of how to monitor electric power networks at minimum cost, see    
Haynes et al. \cite{HHHH02}. Intuitively, the power domination problem consists of finding a set of vertices in a graph that can observe the entire graph according to certain observation rules. The formal definition is given below immediately after some graph theory terminology.

A  {\em graph} $G=(V,E)$  is an ordered pair formed by a finite nonempty set of {\em  vertices}
$V=V(G)$ and a set of {\em edges} $E=E(G)$ containing unordered pairs
of distinct vertices (that is, all graphs are simple and undirected). %We write $u\sim v$ to denote that vertices $u$ and $v$ are adjacent (i.e., $\{u,v\}\in E(G))$.  
The {\em complement} of  $G=(V,E)$ is the 
graph $\overline{G}=(V,\overline{E})$, where $\overline{E}$
consists of all two element subsets of $V$ that are not in $E$.  For any vertex $v \in V$, the {\em neighborhood} of $v$ is the set $N(v) = \{u \in V: \{u,v\} \in E
\}$ % (or $N_G(v)$ if $G$ is not clear from context),
and the
{\em closed neighborhood} of $v$ is the set $N[v] = N(v) \cup \{ v \}$.
Similarly, for any set of vertices $S$, $N(S) = \cup_{v \in S} N(v)$
and $N[S] = \cup_{v \in  S} N[v]$.  

 For a  set $S$ of vertices in a graph $G$, define  $PD(S)\subseteq V(G)$ recursively as follows:
 \ben
 \item $PD(S):=N[S]= S\cup N(S)$.
 \item While there exists $ v\in PD(S)$ such that $|N(v)\setminus PD(S)|=1$:  $PD(S):=PD(S)\cup N(v)$.
 \een
%When Step 2 is applied to $v\in PD(S)$ to add $w$ to $PD(S)$, we say that $v$ {\em forces} $w$.
 A set $S\subseteq V(G)$ is called a {\em power dominating set} of  a graph $G$ if, at the end of the process above, $PD(S)=V(G)$. A {\em minimum power dominating set} is a power dominating set of minimum cardinality.  The {\em power domination number} of  $G$,  denoted by $\pd (G)$,  is the cardinality of a minimum power dominating set.

Power domination is naturally related to domination and to  zero forcing. A set $S\subseteq V(G)$ is called a {\em dominating set} of  a graph $G$ if $N[S]=V(G)$. A {\em minimum dominating set} is a dominating set of minimum cardinality.  The {\em domination number}  of  $G$, denoted by $\gamma(G)$, is the cardinality of a minimum dominating set.  Clearly $\pd(G)\le \gamma(G)$.  

Zero forcing was introduced independently in combinatorial matrix theory \cite{AIM08} and control of quantum systems \cite{BG07}.  
From a graph theory point of view, zero forcing is a coloring game on a  graph played according to the {\em color change rule}: If $u$ is a blue vertex and exactly one  neighbor $w$ of $u$ is white, then change the color of $w$ to blue.  We say $u$ {\em forces} $w$. A {\em zero forcing set} for  $G$ is a subset of vertices $B$ such that when the vertices in $B$ are colored blue and the remaining vertices are colored white initially,  repeated application of the color change rule can color all vertices of $G$ blue.  A {\em minimum zero forcing set} is a zero forcing set of minimum cardinality.  The {\em zero forcing number}  of  $G$, denoted by $\Z(G)$, is the cardinality of a minimum zero forcing set.   
Power domination can be seen as a domination step followed by a zero forcing process, and we will use the terminology ``$v$ forces $w$'' to refer to Step 2 of power domination. Clearly $\pd(G)\le \Z(G)$. % (a vertex {\em dominates} its closed neighborhood).  

For a graph parameter $\zeta$, the following are \emph{Nordhaus-Gaddum} problems:
\bit
\item Determine a (tight) lower or upper bound on $\zeta(G)+\zeta(\Gc)$.
\item Determine a (tight) lower or upper bound on $\zeta(G)\cdot \zeta(\Gc)$.
\eit
%\medskip 
The name comes from the next theorem of Nordhaus and Gaddum, where $\chi(G)$ denotes the chromatic number  of $G$.
\begin{thm}\label{NGthm} {\rm \cite{NG}}  For any graph $G$ of order $n$, \vspace{-3pt}
\[2\sqrt{n}\le \chi(G)+\chi(\Gc)\le n+1\vspace{-3pt}\]
and\vspace{-3pt}
\[n\le \chi(G)\cdot\chi(\Gc)\le \left(\frac{ n+1} 2\right)^2.\vspace{-3pt}\]
Each bound is assumed for infinitely many values of $n$.
\end{thm}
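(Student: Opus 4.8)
\ms
\noi\textbf{Proof idea.} The plan is to prove the sum upper bound $\chi(G)+\chi(\Gc)\le n+1$ by induction on $n$, to derive the product lower bound $\chi(G)\cdot\chi(\Gc)\ge n$ from a counting argument on color classes, and then to obtain the two remaining inequalities from these by the arithmetic--geometric mean inequality; the ``infinitely many $n$'' claims will come from explicit families.

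\ms
\noi For the sum upper bound I would induct on $n$, the case $n=1$ being clear since $\chi(G)+\chi(\Gc)=2=n+1$. For the step, pick a vertex $v$, set $G'=G-v$, and note $\Gc-v=\overline{G'}$, so by induction $\chi(G')+\chi(\overline{G'})\le n$. Write $d=\deg_G(v)$, so $\deg_{\Gc}(v)=(n-1)-d$. If $d\le\chi(G')-1$ then a proper $\chi(G')$-coloring of $G'$ has a free color for $v$, giving $\chi(G)\le\chi(G')$; symmetrically, if $d\ge n-\chi(\overline{G'})$ (equivalently $(n-1)-d\le\chi(\overline{G'})-1$) then $\chi(\Gc)\le\chi(\overline{G'})$. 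In either case, using $\chi(\Gc)\le\chi(\overline{G'})+1$ (resp.\ $\chi(G)\le\chi(G')+1$), we get $\chi(G)+\chi(\Gc)\le\chi(G')+\chi(\overline{G'})+1\le n+1$. In the only remaining case, $\chi(G')\le d\le n-\chi(\overline{G'})-1$, so $\chi(G')+\chi(\overline{G'})\le n-1$ and the crude bound $\chi(G)+\chi(\Gc)\le\chi(G')+\chi(\overline{G'})+2\le n+1$ already works. This degree case analysis is the one step I expect to need care.

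\ms
\noi For the product lower bound, fix a proper coloring of $G$ with $\chi(G)$ colors; each color class is independent in $G$, hence a clique in $\Gc$, so it has at most $\chi(\Gc)$ vertices, and summing over the $\chi(G)$ classes gives $n\le\chi(G)\cdot\chi(\Gc)$. Then $\chi(G)+\chi(\Gc)\ge 2\sqrt{\chi(G)\cdot\chi(\Gc)}\ge 2\sqrt n$ and $\chi(G)\cdot\chi(\Gc)\le\bigl(\tfrac{\chi(G)+\chi(\Gc)}{2}\bigr)^2\le\bigl(\tfrac{n+1}{2}\bigr)^2$, which are the remaining two bounds.

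\ms
\noi Finally, $G=K_n$ attains $\chi(G)+\chi(\Gc)=n+1$ for every $n$; for $n=k^2$, letting $G$ be the disjoint union of $k$ copies of $K_k$ (so $\Gc$ is complete $k$-partite with parts of size $k$) gives $\chi(G)=\chi(\Gc)=k$, hence $\chi(G)+\chi(\Gc)=2\sqrt n$ and $\chi(G)\cdot\chi(\Gc)=n$; and for $n=2m-1$, letting $G=K_m\du\overline{K_{m-1}}$ (so $\Gc$ is a clique on $m-1$ vertices completely joined to an independent set of $m$ vertices) gives $\chi(G)=\chi(\Gc)=m$, hence $\chi(G)\cdot\chi(\Gc)=\bigl(\tfrac{n+1}{2}\bigr)^2$. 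Thus the main obstacle is confined to the inductive step for the sum upper bound; everything else is short counting or AM--GM.
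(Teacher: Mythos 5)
Your proof is correct. Note that the paper itself does not prove this statement at all: Theorem \ref{NGthm} is quoted from Nordhaus and Gaddum \cite{NG} purely as motivation for the Nordhaus--Gaddum problem, so there is no in-paper argument to compare against. What you have written is essentially the classical proof of that cited result: the induction on $n$ for $\chi(G)+\chi(\Gc)\le n+1$ with the three-way degree case analysis is complete and airtight (the cases $d\le\chi(G')-1$, $d\ge n-\chi(\overline{G'})$, and $\chi(G')\le d\le n-\chi(\overline{G'})-1$ do exhaust all possibilities, and the crude $+1$ bounds are used correctly in each); the color-class counting gives $n\le\chi(G)\cdot\chi(\Gc)$ since each class is a clique in $\Gc$ and hence has at most $\chi(\Gc)$ vertices; and AM--GM correctly transfers these to the remaining two inequalities. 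Your extremal families also check out: $K_n$ gives $\chi(G)+\chi(\Gc)=n+1$ for all $n$; for $n=k^2$ the union of $k$ copies of $K_k$ gives $\chi(G)=\chi(\Gc)=k$, attaining both lower bounds; and for $n=2m-1$ the graph $K_m\du\overline{K_{m-1}}$ has complement equal to $K_{m-1}$ joined to an independent set of size $m$, so $\chi(G)=\chi(\Gc)=m$ and the product upper bound $\left(\frac{n+1}{2}\right)^2$ is attained. No gaps.
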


Nordhaus-Gaddum bounds have been found for both domination and zero forcing.  In addition to the original papers cited here, Nordhaus-Gaddum results for domination and several variants (but not power domination) are discussed in Section 9.1 of the book  \cite{HHS98} and in the survey paper \cite{NGsurvey13}.  
\begin{thm}{\rm \cite{JP72}}\label{NGdom}
For any graph $G$ of order $n\ge 2$,
\[3\le \gamma(G) +\gamma(\Gc)\le n + 1 \qquad\mbox{ and }\qquad 2\le \gamma(G) \cdot\gamma(\Gc)\le n.\]
The upper bounds are realized by the complete graph $K_n$, and the lower bounds are realized by the star (complete bipartite graph) $K_{1,n-1}$.
\end{thm}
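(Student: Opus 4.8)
The plan is to prove the four inequalities separately, using only elementary facts about domination together with Ore's theorem (an isolate-free graph of order $n$ has domination number at most $n/2$).

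\emph{Lower bounds.} Every graph has domination number at least $1$, so it is enough to note that when $n\ge 2$ we cannot have $\gamma(G)=\gamma(\overline G)=1$: if $\gamma(G)=1$ then $G$ has a dominating vertex, which is isolated in $\overline G$, and a graph on $n\ge 2$ vertices with an isolated vertex has domination number at least $2$. Hence one of $\gamma(G),\gamma(\overline G)$ is at least $2$, giving $\gamma(G)+\gamma(\overline G)\ge 3$ and $\gamma(G)\cdot\gamma(\overline G)\ge 2$. For $K_{1,n-1}$ we have $\gamma(K_{1,n-1})=1$ while $\overline{K_{1,n-1}}=K_{n-1}\du K_1$ has domination number $2$, so both bounds are sharp.

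\emph{Upper bound on the sum.} I would first record the elementary inequality $\gamma(H)\le n-\Delta(H)$: if $v$ has maximum degree, then $\{v\}\cup\bigl(V(H)\setminus N_H[v]\bigr)$ dominates $H$ and has size $n-\Delta(H)$. Next, since $\deg_G(v)+\deg_{\overline G}(v)=n-1$ for every vertex $v$, taking $v$ of maximum degree in $G$ yields $\Delta(G)+\Delta(\overline G)\ge n-1$. Combining, $\gamma(G)+\gamma(\overline G)\le \bigl(n-\Delta(G)\bigr)+\bigl(n-\Delta(\overline G)\bigr)=2n-\bigl(\Delta(G)+\Delta(\overline G)\bigr)\le n+1$. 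Equality holds for $K_n$, where $\gamma(K_n)=1$ and $\gamma(\overline{K_n})=n$.

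\emph{Upper bound on the product.} Assume without loss of generality that $\gamma(G)\le\gamma(\overline G)$, and argue by cases on $\gamma(G)$. If $\gamma(G)=1$ then $\gamma(G)\cdot\gamma(\overline G)=\gamma(\overline G)\le n$. If $\gamma(G)=2$, then $G$ has no dominating vertex, so $\overline G$ is isolate-free and Ore's theorem gives $\gamma(\overline G)\le n/2$, hence $\gamma(G)\cdot\gamma(\overline G)=2\gamma(\overline G)\le n$. The remaining case $\gamma(G)\ge 3$ is where the work lies. One first shows that both $G$ and $\overline G$ are connected (a disconnected graph has a complement that is dominated by any two vertices lying in two distinct components) and of diameter exactly $2$ (diameter at least $3$ again forces the complement to have domination number at most $2$, while diameter $1$ gives $K_n$ with domination number $1$); then, since in a diameter-$2$ graph the open neighborhood of any vertex is a dominating set, one gets $\gamma(G)\le\delta(G)$ and $\gamma(\overline G)\le\delta(\overline G)$. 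Together with $\delta(G)+\delta(\overline G)\le n-1$ this settles all sufficiently small $n$ at once, and a parity argument together with a finite check of the few remaining small orders shows that $\gamma(G),\gamma(\overline G)\ge 3$ forces $n\ge 9$ with $\gamma(G)=\gamma(\overline G)=3$, so $\gamma(G)\cdot\gamma(\overline G)=9\le n$. Once more $K_n$ is extremal.

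\emph{Expected obstacle.} Everything above, apart from the last case, uses only the bound $\gamma(H)\le n-\Delta(H)$, Ore's theorem, and the two standard observations that a disconnected graph, or a graph of diameter at least $3$, has a complement with domination number at most $2$. The genuinely delicate step is the case $\min\{\gamma(G),\gamma(\overline G)\}\ge 3$ of the product inequality: the crude estimates $\gamma\le\delta$ and $\delta(G)+\delta(\overline G)\le n-1$ leave the orders $n=7,8$ open, and closing them --- equivalently, proving that two domination numbers both at least $3$ force $n\ge 9$ --- is the heart of the original argument of \cite{JP72}.
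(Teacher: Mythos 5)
Your lower bounds, your sum upper bound, and the cases $\gamma(G)\in\{1,2\}$ of the product bound are all correct and standard (the paper states Theorem~\ref{NGdom} only as a citation to \cite{JP72}, with no proof to compare against line by line). The genuine gap is exactly where you locate ``the heart'': the case $\min\{\gamma(G),\gamma(\overline{G})\}\ge 3$ of the product inequality is not proved, because the statement you invoke to close it --- that $\gamma(G),\gamma(\overline{G})\ge 3$ forces $\gamma(G)=\gamma(\overline{G})=3$ (with $n\ge 9$) --- is false. Both domination numbers can be simultaneously large: for the random graph $G(n,\frac12)$, whose complement is again distributed as $G(n,\frac12)$, both $\gamma(G)$ and $\gamma(\overline{G})$ are $(1\pm o(1))\log_2 n$ with probability tending to $1$ (this fact is even quoted in Section~\ref{sdiscuss} of the paper), so for all sufficiently large $n$ there are graphs with $\min\{\gamma(G),\gamma(\overline{G})\}\ge 4$. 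Consequently no ``parity argument together with a finite check of small orders'' can reduce this case to the single verification $3\cdot 3\le n$.

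Moreover, the tools you permit yourself there cannot yield the product bound even in principle: $\gamma(G)\le\delta(G)$, $\gamma(\overline{G})\le\delta(\overline{G})$ and $\delta(G)+\delta(\overline{G})\le n-1$ give only the sum bound $\gamma(G)+\gamma(\overline{G})\le n-1$, and a sum of at most $n-1$ is compatible with a product of order $(n-1)^2/4$, far above $n$; so these estimates do not ``settle all sufficiently small $n$'' in any sense relevant to the product, and they leave every $n\ge 7$ open, not just $n=7,8$. What the remaining case actually needs is an inequality of the shape $\gamma(\overline{G})\le n/\gamma(G)$ whenever $\gamma(G)\ge 3$ (equivalently, a direct proof that $n\ge pq$ when $\gamma(G)=p\ge 3$ and $\gamma(\overline{G})=q\ge p$), and none of Ore's theorem, the diameter-$2$ reductions, or the degree inequalities supplies it. That missing step is precisely the nontrivial content of the Jaeger--Payan argument, so as written your proposal does not prove the upper bound $\gamma(G)\cdot\gamma(\overline{G})\le n$.
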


It is  known  that for a graph $G$ of order $n\ge 2$,\vspace{-3pt}
\[n-2\le \Z(G)+\Z(\Gc)\le 2n-1\vspace{-3pt}\]
 and\vspace{-3pt}
\[n-3\le \Z(G)\cdot\Z(\Gc)\le n^2-n,\vspace{-3pt}\]
with the upper bounds realized by the complete graph $K_n$ and the lower bounds realized by the path $P_n$ for $n\ge 4$.  That the  upper bounds are correct is immediate.  The result $n-2\le \Z(G)+\Z(\Gc)$ appears in  \cite{EGR11}.  Then $n-3\le \Z(G)\cdot\Z(\Gc)$ follows, because $1\le \Z(G)$ for all $G$ and the function $f(z)=z(n-2-z)$ attains its minimum on the interval $[1,n-3]$ at the endpoints.   %from $n-3\le \tw(G)\cdot\tw(\Gc)$ \cite{ H15IMA} (where $\tw(H)$ denotes the tree-width of $H$), because $\tw(H)\le \Z(H)$ for every graph $H$.

The general Nordhaus-Gaddum upper bounds for power domination number follow from those for domination number given in  %uses the same graphs used to establish 
Theorem \ref{NGdom}.
The  inequalities $2\le \pd(G) +\pd(\Gc)$ and $1\le \pd(G) \cdot\pd(\Gc)$ are obvious since $1\le \pd(G)$ for every graph, and these are realized by the path $P_n$ (it is straightforward to verify  that $\pd(P_n)=1=\pd(\overline{P_n})$). 
\begin{cor}\label{NGpowerdom}
For any graph $G$  of order $n$,\vspace{-3pt}
\[2\le \pd(G) +\pd(\Gc)\le n + 1 \qquad\mbox{ and }\qquad1\le \pd(G) \cdot\pd(\Gc)\le n.\vspace{-3pt}\]
The upper bounds are realized by the complete graph $K_n$, and the lower bounds are realized by the path $P_n$. %and these bounds are tight for graphs of arbitrarily large order.  
\end{cor}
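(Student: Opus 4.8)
The plan is to obtain all four inequalities from facts already established in the excerpt, using that the power domination number of any graph $H$ satisfies $1\le\pd(H)\le\gamma(H)$. For the two upper bounds I would apply $\pd(H)\le\gamma(H)$ with $H=G$ and with $H=\Gc$ and then combine the two inequalities additively and multiplicatively, obtaining $\pd(G)+\pd(\Gc)\le\gamma(G)+\gamma(\Gc)$ and $\pd(G)\cdot\pd(\Gc)\le\gamma(G)\cdot\gamma(\Gc)$; Theorem~\ref{NGdom} then bounds the right-hand sides by $n+1$ and $n$, respectively. Since Theorem~\ref{NGdom} is stated for $n\ge 2$, the degenerate case $n=1$ (where $G=\Gc=K_1$ and every quantity equals its asserted bound) should be noted separately.

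For the two lower bounds I would use only that a graph on a nonempty vertex set cannot be power dominated by the empty set, because $PD(\emptyset)=\emptyset$ and no force is ever available; hence $\pd(H)\ge 1$ for every $H$. Therefore $\pd(G)+\pd(\Gc)\ge 2$ and $\pd(G)\cdot\pd(\Gc)\ge 1$.

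It then remains to check that the claimed extremal graphs are extremal. For $K_n$: a single vertex closed-dominates $K_n$, so $\pd(K_n)=1$; and $\overline{K_n}$ has no edges, so $PD(S)=N[S]=S$ for every $S$ and no forcing step ever applies, giving $\pd(\overline{K_n})=n$. Thus $\pd(K_n)+\pd(\overline{K_n})=n+1$ and $\pd(K_n)\cdot\pd(\overline{K_n})=n$, matching both upper bounds. For the path I would invoke the fact recorded just before the statement that $\pd(P_n)=1=\pd(\overline{P_n})$, which immediately yields sum $2$ and product $1$, matching both lower bounds.

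I do not expect any genuine obstacle: the corollary is essentially a packaging of $\pd\le\gamma$, the trivial bound $\pd\ge 1$, Theorem~\ref{NGdom}, and two small computations. The only point demanding a moment's care is the path realization, specifically the assertion $\pd(\overline{P_n})=1$; one should check that after the initial domination step a single well-chosen vertex leaves a configuration on which the forcing process completes on $\overline{P_n}$ (and observe that the small orders $n\in\{2,3\}$, where $\overline{P_n}$ is disconnected, behave differently and are simply not covered by this example).
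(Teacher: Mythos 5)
Your proposal is correct and follows essentially the same route as the paper: the upper bounds come from $\pd\le\gamma$ together with Theorem \ref{NGdom}, the lower bounds from $\pd\ge 1$, and the extremes are realized by $K_n$ (with $\pd(K_n)=1$, $\pd(\overline{K_n})=n$) and by $P_n$ via $\pd(P_n)=1=\pd(\overline{P_n})$. Your extra care about the degenerate order $n=1$ and the small paths $n\in\{2,3\}$ is a reasonable refinement of the same argument, not a different approach.
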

%\bpf     The  inequalities $2\le \pd(G) +\pd(\Gc)$ and $1\le \pd(G) \cdot\pd(\Gc)$ are obvious since $1\le \pd(G)$ for every graph, and are realized by the path $P_n$ (it is straightforward to verify  that $\pd(P_N)=1=\pd(\overline{P_n})$. %It is well known that  a vertex of degree 1 is a zero forcing set for  $P_n$ (and hence a power dominating set), and by Proposition \ref{GBarBd1}, $\pd(\overline{P_n})=1$. \epf

%In this note we establish results for the case  when both $G$ and $\Gc$ are connected.  %in which every (connected) component of both $G$ and $\Gc$ has order at least 3, and also consider the case%Sometimes we need assume only there are no isolated vertices, or that all connected components of both $G$ and $\Gc$ have order at least 3.  
In Section \ref{sNGsum} we improve  the sum upper bound for the power domination number  significantly under the assumption that  both $G$ and $\Gc$ are connected, or more generally all components of  both have order at least 3, and show that this bound is substantially different from the analogous bound for domination number.  In Section \ref{sNGprod} we refine the product bounds for certain special cases.   Section \ref{stools} contains additional results that   we  use in Sections \ref{sNGsum} and \ref{sNGprod}.  Section 5 summarizes the bounds for  domination number, power domination number, and zero forcing number. % and discusses what is know about average behavior.

Some additional notation is used: 
Let  $K_{p,q}$ denote a complete bipartite graph with partite sets of cardinality $p$ and $q$.  The {\em degree} of vertex $v$ is $\deg_G v=|N_G(v)|$. Let $\delta(G)$ (respectively, $\Delta(G)$)  denote the minimum (respectively, maximum) of the degrees of the vertices of $G$.  A {\em cut-set} is a set of vertices whose removal disconnects $G$. The {\em vertex-connectivity} of $G\ne K_n$, denoted by $\kappa (G)$, is  the minimum cardinality of  a cut-set  (note $\kappa(G)=0$ if $G$ is disconnected), and  $\kappa(K_n)=n-1$. An {\em edge-cut} is a set of edges whose removal disconnects $G$, and the {\em edge-connectivity} of $G$, denoted by  $\lambda (G)$, is the minimum cardinality of an  edge-cut. Observe that $\kappa (G)\leq \lambda (G) \leq \delta (G)$.  The {\em distance}  between vertices $u$ and $v$ in $G$, $d_G(u,v)$,  is the length of a shortest path between $u$ and $v$ in $G$.  The {\em diameter} of $G$, $\diam (G)$, is the maximum distance between  two vertices in a connected graph $G$; $\diam(G)=\infty$ if $G$ is not connected.  A {\em component} of a graph is a maximal connected subgraph. 

%%%%%%%%%%%%%%%%%%%%%%%%%%%%%%%%%%%%%%%%%

\section{Tools for Nordhaus-Gaddum bounds for power domination}\label{stools}

In this section we establish results that will be applied to improve Nordhaus-Gaddum upper bounds for both the sum and product of the power domination number with additional assumptions, such as every component of the graph and its complement  has order at least 3.  
The next result is immediate from Corollary \ref{NGpowerdom}.

\begin{cor}\label{cor:NGpowerdom} For any graph $G$  of order $n$, $\pd(G)\le \lf \frac n {\pd(\Gc)}\rf$.
\end{cor}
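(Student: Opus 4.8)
The plan is to derive this immediately from the product upper bound in Corollary~\ref{NGpowerdom}. That corollary gives $\pd(G)\cdot\pd(\Gc)\le n$, and since $\pd(\Gc)\ge 1$ for every graph, dividing both sides by the positive integer $\pd(\Gc)$ yields $\pd(G)\le \frac{n}{\pd(\Gc)}$.

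The only additional observation is that $\pd(G)$ is a nonnegative integer, so an integer bounded above by the real number $\frac{n}{\pd(\Gc)}$ is in fact bounded above by $\lf \frac{n}{\pd(\Gc)}\rf$. This gives the claimed inequality $\pd(G)\le \lf \frac{n}{\pd(\Gc)}\rf$.

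There is no real obstacle here; the statement is a routine arithmetic consequence (integrality plus division) of the already-established product bound, and is recorded as a corollary because the floored form is the convenient one to invoke repeatedly in Sections~\ref{sNGsum} and~\ref{sNGprod}.
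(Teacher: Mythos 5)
Your argument is correct and is exactly the route the paper takes: Corollary~\ref{cor:NGpowerdom} is stated as immediate from the product bound $\pd(G)\cdot\pd(\Gc)\le n$ of Corollary~\ref{NGpowerdom}, with the floor supplied by the integrality of $\pd(G)$. Nothing further is needed.
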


Next we consider the relationship between the power domination number of $G$ or $\Gc$ and the minimum degree or vertex-connectivity of $G$.

 \begin{rem} For any graph $G$ of order $n$, $\gamma(\Gc)\le\delta(G)+1$, because a vertex of maximum degree in $\Gc$, which is $n-1-\delta(G)$, together with all its non-neighbors is a dominating set of $\Gc$.
\end{rem}

\begin{prop}\label{GBarBd1} 
Let $G$ be a graph such that neither $G$ nor $\Gc$ has  isolated vertices.
Then $\pd(\Gc)\le\delta(G)$.  If $\delta(G)=1$, then $\pd(\Gc)=1$.
%In particular, if  $G$ has a leaf then $\pd(\Gc)=1$.
\end{prop}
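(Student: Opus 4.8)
The plan is to produce, for any graph $G$ in which neither $G$ nor $\Gc$ has an isolated vertex, an explicit power dominating set of $\Gc$ of size exactly $\delta(G)$; this gives $\pd(\Gc)\le\delta(G)$ at once, and the case $\delta(G)=1$ will then follow from the trivial bound $\pd(\Gc)\ge 1$. Since $G$ has no isolated vertex, $\delta:=\delta(G)\ge 1$. I would fix a vertex $v$ with $\deg_G v=\delta$ and write $N_G(v)=\{u_1,u_2,\dots,u_\delta\}$. The remark preceding the proposition shows that the closed neighborhood $N_G[v]$ is a dominating set of $\Gc$ of size $\delta+1$ (the $\Gc$-neighbors of the maximum-degree vertex $v$ of $\Gc$ are precisely $V\setminus N_G[v]$). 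The key idea is to trim this set to $S:=N_G[v]\setminus\{u_1\}=\{v\}\cup\{u_2,\dots,u_\delta\}$, which has $\delta$ vertices, and to recover the deleted vertex $u_1$ by a single forcing step.

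First I would run the domination step for $S$ in $\Gc$. Since $N_{\Gc}(v)=V\setminus N_G[v]$, we get $N_{\Gc}[v]=V\setminus\{u_1,\dots,u_\delta\}$, and adjoining $u_2,\dots,u_\delta\in S$ yields $PD(S)=N_{\Gc}[S]\supseteq V\setminus\{u_1\}$. Hence either $S$ already dominates $\Gc$, or after the domination step $PD(S)=V(\Gc)\setminus\{u_1\}$.

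In the remaining case I would perform one force. Because $\Gc$ has no isolated vertex, $u_1$ has a neighbor $x$ in $\Gc$; then $x\ne u_1$, so $x\in PD(S)$, and $N_{\Gc}(x)\setminus PD(S)=N_{\Gc}(x)\cap\{u_1\}=\{u_1\}$, so $x$ forces $u_1$ and $PD(S)$ becomes $V(\Gc)$. Thus $S$ is a power dominating set of $\Gc$ and $\pd(\Gc)\le|S|=\delta(G)$. If in addition $\delta(G)=1$, then $\pd(\Gc)\le 1$; since the empty set is never a power dominating set of a nonempty graph, $\pd(\Gc)\ge 1$, and therefore $\pd(\Gc)=1$.

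There is no computational difficulty in this argument; its only real content is the choice of $S$. The step I expect to matter is exploiting the two-phase nature of power domination — a domination step followed by zero forcing — to shave the obvious size-$(\delta+1)$ dominating set $N_G[v]$ of $\Gc$ down to size $\delta$, the hypothesis that $\Gc$ has no isolated vertex being exactly what licenses the single forcing step. The one thing worth checking carefully is the degenerate case $\delta=1$, where $S=\{v\}$ and $N_G(v)=\{u_1\}$; the same reasoning applies verbatim.
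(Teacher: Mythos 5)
Your proof is correct and is essentially the paper's own argument: your set $S=N_G[v]\setminus\{u_1\}$ for a minimum-degree vertex $v$ of $G$ is exactly the paper's set consisting of a maximum-degree vertex of $\Gc$ together with all but one of its non-neighbors in $\Gc$, with the same single forcing step (licensed by $\Gc$ having no isolated vertex) recovering the last vertex. The handling of the case $\delta(G)=1$ also matches the paper.
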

\begin{proof}
%The case of a leaf in first item follows immediately from Observation \ref{degn-2}.  More generally, 
Construct a power dominating set $S$ for $\Gc$ of cardinality $\delta(G)$ as follows: Put a vertex $v$ of maximum degree in $\Gc$ into $S$, so $|N_{\Gc}[v]|=\Delta(\Gc)+1=n-1-\delta(G)+1=n-\delta(G)<n$, where $n$ is the order of $G$.  Then add all but one of the vertices in $V(\Gc)\setminus N_{\Gc}[v]$ into $S$, i.e., add $\delta(G)-1\ge 0$ vertices to $S$, so $|S|=\delta(G)$.  Now $N_{\Gc}[S]$ contains all but at most one vertex, and since $\Gc$ has no isolated vertices, any neighbor of such a vertex can force it.  The last statement then follows since $\pd(G)\ge 1$ for all graphs $G$.
\end{proof}

\begin{thm}\label{kappa}{\rm \cite{HV06}} If $G$ is a graph with $\diam (G) =2$, then $\gamma (G)\leq \kappa(G)$. 
\end{thm}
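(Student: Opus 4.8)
The plan is to show that any minimum cut-set of $G$ is automatically a dominating set of $G$, which immediately yields $\gamma(G)\le\kappa(G)$. First I would observe that $\diam(G)=2$ forces $G$ to be connected but not complete, so $G\ne K_n$, the quantity $\kappa(G)$ really is the minimum cardinality of a cut-set, and such a cut-set exists; fix one, say $S$, with $|S|=\kappa(G)$. Write the components of $G-S$ as $C_1,\dots,C_k$, where $k\ge 2$ because $S$ is a cut-set.

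Next I would verify that every vertex outside $S$ has a neighbor in $S$. Given $v\notin S$, say $v\in C_i$, pick any vertex $w$ in a different component $C_j$ (possible since $k\ge2$). Since $C_i$ and $C_j$ are distinct components of the induced subgraph $G-S$, the vertices $v$ and $w$ are nonadjacent in $G$, so $d_G(v,w)\ge 2$; combined with $\diam(G)=2$ this gives $d_G(v,w)=2$, so $v$ and $w$ have a common neighbor $u$. If $u$ were not in $S$, the edges $uv$ and $uw$ would place $u$ in the same component of $G-S$ as both $v$ and $w$, contradicting $C_i\ne C_j$; hence $u\in S$. Thus $u$ is a neighbor of $v$ lying in $S$. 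Since also $S\subseteq N[S]$, this shows $N[S]=V(G)$, and therefore $\gamma(G)\le|S|=\kappa(G)$.

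I do not anticipate a genuine obstacle here: the whole proof rests on the single observation that, under $\diam(G)=2$, a shortest path from $v$ to $w$ between two different components of $G-S$ has length exactly $2$ and is forced to route through $S$, so its middle vertex is a dominating neighbor of $v$. The only fussy points are confirming that $\kappa(G)$ is well defined in this setting (that is, that $\diam(G)=2$ rules out $G=K_n$) and noticing that the argument is unaffected by whether some component $C_i$ happens to be a single vertex, since the choice of $w$ in another component never uses the internal structure of $C_i$.
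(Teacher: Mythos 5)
Your proof is correct: the observation that, when $\diam(G)=2$, the middle vertex of a length-two path between two vertices in different components of $G-S$ must lie in $S$ shows that every minimum cut-set is a dominating set, and the preliminary check that $\diam(G)=2$ rules out $K_n$ (so a cut-set exists) is handled properly. The paper itself gives no proof, citing \cite{HV06}, but your argument is exactly the reasoning the paper uses implicitly in its proof of Theorem \ref{kappapd}, where it takes a minimum cut-set $S$ and asserts that every vertex of $V\setminus S$ is adjacent to a vertex of $S$ because $\diam(G)=2$; so your write-up is essentially the standard argument, just made explicit.
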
\vspace{-2pt}

Next we state several results that give sufficient conditions for $\gamma(G)\le 2$ or $\gamma(\Gc)\le 2$, which then imply $\pd(G)\le 2$ or $\pd(\Gc)\le 2$.  %\vspace{-4pt}

\begin{thm}\label{diam3+} {\rm \cite{BCD88}, \cite[Theorem 2.25]{HHS98}}
If $G$ is a  graph with $\diam(G)\geq 3$, then $\gamma (\Gc)\leq 2$. 
\end{thm}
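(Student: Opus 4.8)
The plan is to prove the contrapositive: if $\gamma(\Gc) \geq 3$, then $\diam(G) \leq 2$. So suppose $\gamma(\Gc)\geq 3$, and let $u,v$ be two arbitrary vertices of $G$; the goal is to show $d_G(u,v)\leq 2$, i.e., that $u$ and $v$ are either adjacent in $G$ or have a common neighbor in $G$. First I would dispose of the adjacent case, so assume $\{u,v\}\notin E(G)$, which means $\{u,v\}\in E(\Gc)$. Now I would argue by contradiction toward the diameter bound: suppose $u$ and $v$ have no common neighbor in $G$. The key observation is then that $\{u,v\}$ should be a dominating set of $\Gc$, contradicting $\gamma(\Gc)\geq 3$.

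To see why $\{u,v\}$ dominates $\Gc$, take any vertex $w\notin\{u,v\}$. Since $u$ and $v$ have no common neighbor in $G$, the vertex $w$ fails to be adjacent in $G$ to at least one of $u,v$ (it cannot be adjacent in $G$ to both). Therefore $w$ is adjacent in $\Gc$ to $u$ or to $v$, so $w\in N_{\Gc}[\{u,v\}]$. Since $w$ was arbitrary and $u,v\in N_{\Gc}[\{u,v\}]$ trivially, we get $N_{\Gc}[\{u,v\}] = V(\Gc)$, so $\gamma(\Gc)\leq 2$. This contradicts the hypothesis $\gamma(\Gc)\geq 3$. Hence $u$ and $v$ do have a common neighbor in $G$ (when non-adjacent), so $d_G(u,v)\leq 2$ in all cases, giving $\diam(G)\leq 2$.

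There is one routine edge case worth checking: the argument implicitly requires $G$ to be connected so that $\diam(G)$ is finite and the claim "$\diam(G)\geq 3$" is the only alternative to "$\diam(G)\leq 2$"; but in fact the contrapositive I prove shows directly that any two vertices are at distance at most $2$, which forces connectivity, so no separate hypothesis is needed. I do not anticipate a genuine obstacle here — the whole content is the counting observation that a non-edge of $G$ with no common $G$-neighbor of its endpoints is exactly a dominating pair of $\Gc$ — so the "hard part" is merely being careful that the case split (adjacent in $G$ versus not) is exhaustive and that the domination check covers every vertex $w$, including confirming the endpoints $u,v$ themselves lie in the closed neighborhood, which is immediate.
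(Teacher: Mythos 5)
Your proof is correct, and the key point to note is that the paper does not prove this statement at all: it is quoted from \cite{BCD88} and \cite[Theorem 2.25]{HHS98}. The standard proof in those sources is simply the direct form of your central observation: since $\diam(G)\ge 3$ (including $\diam(G)=\infty$ for disconnected $G$), pick $u,v$ with $d_G(u,v)\ge 3$; no vertex $w$ can be adjacent in $G$ to both (that would give $d_G(u,v)\le 2$), so every $w$ is adjacent in $\Gc$ to $u$ or to $v$, and $\{u,v\}$ is a dominating set of $\Gc$. Your contrapositive-plus-inner-contradiction packaging is logically equivalent but a slight detour: you quantify over all pairs $u,v$ and use $\gamma(\Gc)\ge 3$ to exclude a bad pair, whereas the direct argument needs only the single pair realizing the large distance, and it absorbs the disconnected case without comment (two vertices in different components are non-adjacent with no common neighbor). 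Both arguments rest on exactly the same counting fact, so there is no gap; the direct version is just marginally leaner and is what the cited references do.
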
\vspace{-2pt}
Note that Theorem \ref{diam3+} also applies to graphs that are not connected.

\begin{thm}\label{kappapd}
Suppose  $G$ is a graph with $\diam (G)=2$ such that $\Gc$ has no isolated vertices.  
 Then $\pd (G)\leq  \kappa (G)-1$ or $\pd (\Gc)\leq 2$. 
\end{thm}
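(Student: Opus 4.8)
The plan is to dispose first of the easy case $\gamma(G)\le\kappa(G)-1$, in which $\pd(G)\le\gamma(G)\le\kappa(G)-1$ since $\pd\le\gamma$, and then to work under the assumption $\gamma(G)=\kappa(G)$, which is permissible by Theorem~\ref{kappa}. Note $\kappa(G)\ge 2$: a cut-vertex of a graph of diameter $2$ is adjacent to every other vertex (otherwise two vertices would be at distance $\ge 3$), so the corresponding vertex of $\Gc$ would be isolated. Fix a minimum cut-set $S$ with $|S|=\kappa:=\kappa(G)$. As in the proof of Theorem~\ref{kappa}, $S$ is a dominating set (a vertex of $V(G)\setminus S$ with no neighbor in $S$ would be at distance $\ge 3$ from some other component), and by minimality of $S$ every vertex of $S$ has a neighbor in every component of $G-S$ (else $S$ minus that vertex would already be a cut-set); there are at least two components. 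The goal is to produce $v\in S$ for which $N[S\setminus\{v\}]$ is a zero forcing set of $G$, i.e.\ $S\setminus\{v\}$ is a power dominating set of size $\kappa-1$, and to show that when this fails one has $\pd(\Gc)\le 2$.

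For $v\in S$ let $P_v$ be the set of vertices of $G-S$ whose only neighbor in $S$ is $v$. Then the white set $M_v:=V(G)\setminus N[S\setminus\{v\}]$ equals $P_v$, together with $v$ itself precisely when $v$ has no neighbor in $S\setminus\{v\}$. If $P_v=\emptyset$ for some $v$, then every vertex of $G-S$ has a neighbor in $S\setminus\{v\}$, so $M_v\subseteq\{v\}$; and $M_v\ne\emptyset$, else $S\setminus\{v\}$ is a dominating set of size $\kappa-1$, contradicting $\gamma(G)=\kappa$. Thus $M_v=\{v\}$, and any (necessarily blue) neighbor of $v$ forces it, so $\pd(G)\le\kappa-1$. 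So assume $P_v\ne\emptyset$ for every $v\in S$. Using $\diam(G)=2$: if $x\in P_v$ lies in a component $C$ and $y$ lies in a different component, then $v$ is the only possible common neighbor of $x$ and $y$, so $v\sim y$; hence each $v\in S$ has type (I) $P_v$ lies in one component $C=C(v)$ and $v$ is adjacent to every other component, or type (II) $v$ is adjacent to all of $V(G)\setminus S$. If every $v\in S$ has type (II), then for any $s\in S$ and $u\in V(G)\setminus S$ the set $\{s,u\}$ dominates $G$ ($s$ covers $V(G)\setminus S$, $u$ covers $S$), so $\gamma(G)\le 2$ and thus $\kappa=\gamma(G)=2$; then $S=\{v_1,v_2\}$ with $v_1\not\sim v_2$ (else $v_1$ is universal and $\Gc$ has an isolated vertex), and $\{v_1\}$ is a power dominating set, giving $\pd(G)=1=\kappa-1$.

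The remaining case has some $v\in S$ of type (I), with $P_v\subseteq C$. Recall that a set $B$ is a zero forcing set exactly when it meets every \emph{fort}, i.e.\ every nonempty $F$ such that no vertex of $V(G)\setminus F$ has exactly one neighbor in $F$; so $N[S\setminus\{v\}]$ fails to be a zero forcing set iff $M_v$ contains a fort $F$. Since every vertex of a component other than $C$ is adjacent to $v$ but to no vertex of $C$, such an $F$ cannot contain $v$, so $F\subseteq P_v\subseteq C$; applying $\diam(G)=2$ to a vertex of $F$ and a vertex of another component shows $F\subseteq N_G(v)$, whence $|F|\ge 2$. If $|P_v|=1$ this is a contradiction, and the forcing completes (reaching $v$ as well, if it is white, through a vertex of another component), so $\pd(G)\le\kappa-1$. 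If $|P_v|\ge 2$, one applies $\diam(G)=2$ once more, comparing vertices of $F$ with a vertex of $C$ not adjacent to $v$ (which exists because $v$ has type (I), not (II)), to conclude that either no such fort $F$ exists — so $\pd(G)\le\kappa-1$ — or $G$ has an edge lying in no triangle, i.e.\ two vertices with disjoint neighborhoods, which is equivalent to $\gamma(\Gc)\le 2$ and hence gives $\pd(\Gc)\le 2$.

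Everything through the first three paragraphs is bookkeeping. The substantive difficulty, and the step I expect to be the main obstacle, is the final case $|P_v|\ge 2$ with $v$ of type (I): converting the existence of a stalling fort inside $P_v\cap N_G(v)$ into the concrete conclusion that $\Gc$ has domination number at most $2$ (equivalently, that $G$ has a triangle-free edge) is exactly where the diameter-$2$ hypothesis must be pushed hardest.
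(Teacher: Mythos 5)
Your reduction to the case $\gamma(G)=\kappa(G)$ via Theorem~\ref{kappa}, the observation that $\kappa(G)\ge 2$, the description of the white set $M_v$ in terms of $P_v$, the case $P_v=\emptyset$, and the all-type-(II) case are all correct. However, the proof is not complete: in the final case (some $v\in S$ of type (I), not type (II), with $|P_v|\ge 2$), the step you yourself flag as the main obstacle is genuinely missing. You assert, without argument, that applying $\diam(G)=2$ ``once more'' shows that either no fort lies inside $P_v$ (so $\pd(G)\le \kappa(G)-1$) or $G$ has an edge lying in no triangle (so $\gamma(\Gc)\le 2$); nothing in the proposal substantiates this dichotomy, and it is not clear that it even holds as stated. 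So, as written, the conclusion is not established in exactly the case that carries the real content. (A smaller point: the fort characterization of zero forcing sets that you ``recall'' is not in this paper and would need a citation or a proof.)

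The gap is easy to close, and the paper's own proof does so with none of this machinery: it splits only on whether some vertex $u\in V\setminus S$ has exactly one neighbor $v$ in $S$, i.e.\ whether some $P_v\ne\emptyset$. If so, pick $x$ in a component of $G-S$ different from the component $G_1$ containing $u$. Since $N_G(u)\subseteq V(G_1)\cup\{v\}$ and $N_G(x)$ avoids $V(G_1)$, in $\Gc$ the vertex $u$ dominates $S\setminus\{v\}$ and all components other than $G_1$, while $x$ dominates $V(G_1)$; thus $\{u,x\}$ dominates everything except possibly $v$, and because $\Gc$ has no isolated vertices a neighbor of $v$ in $\Gc$ forces $v$, giving $\pd(\Gc)\le 2$. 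Otherwise every vertex of $V\setminus S$ has at least two neighbors in $S$, so for any $v\in S$ the set $S\setminus\{v\}$ dominates $V\setminus\{v\}$ and a neighbor of $v$ in $G$ forces it, giving $\pd(G)\le\kappa(G)-1$. Note that the first alternative already disposes of your entire remaining case (indeed of every case in which some $P_v\ne\emptyset$), which is why insisting on the stronger conclusion $\pd(G)\le\kappa(G)-1$ there, via forts, was both harder than necessary and the point where your argument stalls; if you want to salvage your write-up, replace the fort analysis in that case by the two-vertex set $\{u,x\}$ argument above.
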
 \vspace{-12pt}

\begin{proof} Since $\Gc$ has no isolated vertices, every vertex has a neighbor  in $\Gc$.  Let $S$ be a minimum cut-set for $G$. Since $\diam (G)=2$, every vertex in $V\setminus S$ is adjacent to at least one  vertex in $S$. 

 \underline{Case 1:} There exists  a vertex $u\in V\setminus S$ that is adjacent to exactly one vertex in $S$, say $v$ (Case 1 is the only possible case when $\kappa(G)=1$). Let $G_1$ denote the component of $G-S$ containing $u$.  
  In $\Gc$, $u$ dominates $S\setminus \{v\}$ and all vertices in components of $G-S$ other than $G_1$. Let $x$ be any vertex in a component of $G-S$ that is not equal to $G_1$.  Then $x$ dominates the vertices of $G_1$. Therefore,  $\{u,x\}$ dominates all vertices in $V$ except possibly $v$, and any neighbor of $v$ in $\Gc$ can force $v$, so $\{u,x\}$ is a power dominating set for $\Gc$. Thus, $\pd (\Gc)\leq  2$.
  
 \underline{Case 2:} Every vertex in $V\setminus S$ is adjacent to  at least two vertices in $S$.  Then $S\setminus \{v\}$ is a power dominating set for any vertex $v\in S$, because $S\setminus \{v\}$ dominates $V\setminus \{v\}$,  %Since $G$ does not have a dominating vertex, every vertex has a neighbor  in $\Gc$, and such a 
and any neighbor of $v$ in $G$ can force $v$. Thus, $\pd (G)\leq  \kappa (G)-1$.
\end{proof}

\begin{thm}{\rm \cite{GH02}}\label{planar2}
If $G$ is planar and $\diam(G)=2$, then $\gamma(G)\le 2$ or $G=S_4(K_3)$, the graph shown in Figure \ref{fig:S4K3}.  Furthermore, $\gamma(S_4(K_3))=3$.
\end{thm}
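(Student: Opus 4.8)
The plan is to suppose $\gamma(G)\ge 3$, derive enough structure to pin $G$ down to the single graph $S_4(K_3)$, and then verify directly that $\gamma(S_4(K_3))=3$.

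First I would collect the structural consequences of the hypotheses. Since $\diam(G)=2$, Theorem~\ref{kappa} gives $\gamma(G)\le\kappa(G)$, so $\kappa(G)\ge 3$; in particular $G$ is $3$-connected and $\delta(G)\ge\kappa(G)\ge 3$. Since $G$ is planar with at least three vertices, Euler's formula gives $|E(G)|\le 3n-6$, so the average degree is less than $6$ and $\delta(G)\le 5$. Finally, for \emph{any} vertex $v$ the set $N(v)$ is a dominating set of $G$: every vertex other than $v$ lies in $N(v)$ or, being at distance $2$ from $v$, has a neighbor in $N(v)$, and $v$ is dominated by any of its neighbors. Thus $3\le\delta(G)\le 5$, and for a vertex $v$ of minimum degree the set $N(v)$ of size $\delta(G)$ already dominates $G$; the task is to show it can always be shrunk to a dominating pair unless $G=S_4(K_3)$.

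Next I would fix a vertex $v$ with $\deg v=\delta:=\delta(G)\in\{3,4,5\}$, write $N(v)=\{u_1,\dots,u_\delta\}$ and $W=V\setminus N[v]$ (which is nonempty, since $\gamma(G)>1$), and record: (i) every $w\in W$ has a neighbor among the $u_i$, and $N(w)\subseteq N(v)\cup W$ with $\deg w\ge\delta$; (ii) every two vertices of $W$ are adjacent or have a common neighbor; (iii) no two vertices of $G$ dominate $G$ — in particular $\{v,u_i\}$ fails to dominate, so for each $i$ there is some $w\in W$ with $w\notin N[u_i]$; and (iv) planarity restricts the configuration sharply: $N(v)$ induces an outerplanar graph (as $v$ is adjacent to all of $N(v)$), while the bipartite subgraph of cross edges between $N(v)$ and $W$ contains no $K_{3,3}$ and has at most $2(\delta+|W|)-4$ edges. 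Combining (i) and (iv) bounds $|W|$, and then for each $\delta\in\{3,4,5\}$ I would enumerate the admissible adjacency patterns on $N(v)\cup W$. Each such pattern should either exhibit a dominating set of size $2$ — contradicting $\gamma(G)\ge 3$ — or coincide with $S_4(K_3)$. The remaining assertion $\gamma(S_4(K_3))=3$ is then a finite verification: $S_4(K_3)$ is planar with diameter $2$, it has an obvious dominating set of size $3$, no vertex has degree $n-1$ (so $\gamma\ge 2$), and checking the orbits of vertices, and then of pairs, under its automorphism group shows that no two vertices dominate it (so $\gamma\ge 3$).

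I expect the main obstacle to be the case analysis for $\delta=4$ and $\delta=5$: one must use planarity carefully (the exclusion of $K_{3,3}$, the cross-edge count, and the outerplanarity of $N(v)$) both to keep $|W|$ and the number of configurations finite and to be certain that \emph{every} planar diameter-$2$ graph with $\gamma\ge 3$ has been accounted for, so that $S_4(K_3)$ is genuinely the unique exception. A secondary subtlety is that $3$-connectivity is only available through Theorem~\ref{kappa} under the standing assumption $\gamma(G)\ge 3$, so it should be re-invoked when pruning sub-configurations rather than treated as an independent hypothesis.
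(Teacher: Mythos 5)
First, note that the paper does not prove this statement at all: Theorem~\ref{planar2} is quoted from Goddard and Henning \cite{GH02} as a known result, so there is no internal proof to compare your argument against. Your proposal therefore has to stand on its own as a proof of the Goddard--Henning theorem, and as written it does not. The opening reductions are fine (under $\gamma(G)\ge 3$, Theorem~\ref{kappa} gives $\kappa(G)\ge 3$, planarity gives $3\le\delta(G)\le 5$, and $N(v)$ dominates $G$ for every $v$), but the heart of the theorem is exactly the part you defer to ``enumerate the admissible adjacency patterns,'' and the planarity facts you list do not make that enumeration finite. With $W=V\setminus N[v]$, the bipartite bound of $2(\delta+|W|)-4$ cross edges together with ``each $w\in W$ has a neighbor in $N(v)$'' yields the inequality $|W|\le 2(\delta+|W|)-4$, which is vacuous; nothing in (i)--(iv) bounds $|W|$. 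Indeed planar diameter-$2$ graphs are arbitrarily large (e.g.\ two adjacent vertices with $m$ common neighbors), so any bound on the configurations must come from a much more careful exploitation of the assumption that \emph{no} pair of vertices dominates $G$, not from planarity alone. That analysis is the actual content of \cite{GH02} (and even the weaker statement $\gamma(G)\le 3$ for planar diameter-$2$ graphs, due to MacGillivray and Seyffarth, already requires a nontrivial argument); your sketch acknowledges the $\delta=4,5$ cases as ``the main obstacle'' but gives no mechanism for closing them or for certifying that the case analysis is exhaustive, so the uniqueness of $S_4(K_3)$ is asserted rather than proved.

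The final assertion $\gamma(S_4(K_3))=3$ is unproblematic: it is a finite verification on the $9$-vertex graph of Figure~\ref{fig:S4K3}, and your plan (exhibit a dominating triple, then rule out dominating pairs, using the automorphism group to cut down the pairs to check) is sound. But for the main implication you would either need to supply the full structural argument of \cite{GH02} or simply cite it, as the paper does.
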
\vspace{-10pt}

\begin{figure}[!ht]
\begin{center}
\scalebox{.3}{\includegraphics{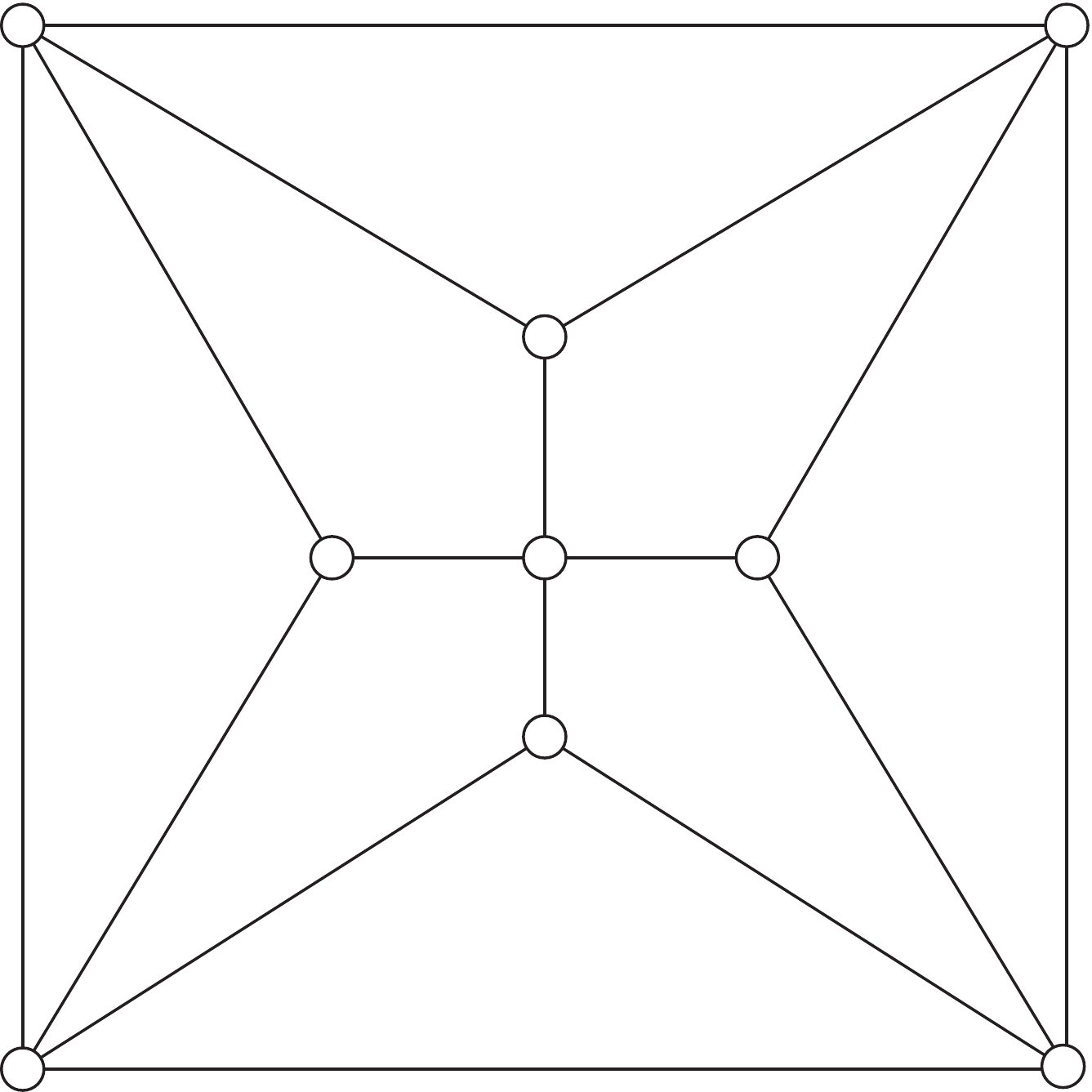}}\vspace{-4pt}
\caption{The graph $S_4(K_3)$, which is the only planar graph with diameter 2 and domination number greater than $2$.}\label{fig:S4K3}\vspace{-10pt}
 \end{center}
\end{figure}

\begin{cor}\label{planar2pd} If $G$ is planar and $\diam(G)=2$, then $\pd(G)\le 2$. \end{cor}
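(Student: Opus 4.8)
The plan is to deduce Corollary \ref{planar2pd} directly from Theorem \ref{planar2} by handling two cases according to whether $G$ is the exceptional graph $S_4(K_3)$ or not. Recall the chain of inequalities $\pd(G)\le\gamma(G)$ established in the introduction, so for every planar $G$ with $\diam(G)=2$ that is not $S_4(K_3)$, Theorem \ref{planar2} gives $\gamma(G)\le 2$ and hence $\pd(G)\le\gamma(G)\le 2$ immediately.

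The only remaining work is the single graph $G=S_4(K_3)$, for which $\gamma(G)=3$ and the inequality $\pd(G)\le\gamma(G)$ is not enough. Here I would exhibit an explicit power dominating set of size $2$ (or even argue $\pd(S_4(K_3))=1$ if that holds). Looking at the structure of $S_4(K_3)$ in Figure \ref{fig:S4K3}, I would pick a convenient vertex $v$, compute $N[v]=PD(\{v\})$ in step 1 of the power domination process, and then check whether the forcing rule in step 2 propagates to the rest of the graph; if a single vertex does not suffice, add one more well-chosen vertex so that after the initial domination step every remaining white vertex is the unique white neighbor of some blue vertex, allowing the forcing process to finish. Since $S_4(K_3)$ has only a handful of vertices, this is a short finite verification.

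The main (and essentially only) obstacle is this explicit check for $S_4(K_3)$: one must be careful to apply the forcing rule correctly (a blue vertex forces a white neighbor only when it has exactly one white neighbor) and to verify that the chosen set of size at most $2$ really does drive $PD(S)=V(S_4(K_3))$. Everything else is an immediate consequence of $\pd\le\gamma$ together with Theorem \ref{planar2}. Combining the generic case and the exceptional case yields $\pd(G)\le 2$ for all planar $G$ with $\diam(G)=2$, as claimed.
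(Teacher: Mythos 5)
Your proposal is correct and follows essentially the same route as the paper, which deduces the corollary from Theorem \ref{planar2} together with the fact that $\pd(S_4(K_3))=2$ (handled there, as in your plan, by a direct finite check on that one exceptional graph). The only difference is that you spell out the $\pd\le\gamma$ step and the intent to verify $S_4(K_3)$ explicitly, which the paper leaves implicit.
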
 
\bpf This follows from Theorem \ref{planar2} and the fact that $\pd(S_4(K_3))=2$. \epf

 When $\kappa (G)=  \lambda (G)=\delta (G)$, $G$ is {\em maximally connected}. In every maximally connected graph $G$, for any vertex $v$ such that $\deg v =\delta (G)$, $N_G(v)$ is a minimum cut-set and the set of all edges incident with $v$ is a minimum edge-cut. In this case we say the cut is {\em trivial}, because it leaves a connected component formed by one isolated vertex. A maximally connected graph $G$ is {\em super-$\lambda$} if every minimum edge-cut is trivial. Super-$\lambda$ graphs of diameter $2$ were characterized by Wang and Li:

\begin{thm} {\rm \cite{WL99}} A connected graph $G$ with $\diam (G)=2$ is super-$\lambda$ if and only if $G$ contains no subgraph $K_{\delta (G)}$ in which all of the vertices have degree $($in $G)$ equal to $\delta (G)$.
\end{thm}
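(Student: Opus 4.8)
The plan is to reduce the statement to a purely local counting fact and then prove each implication by contraposition, invoking one and the same pigeonhole argument twice. Write $\delta=\delta(G)$ and $n=|V(G)|$, and assume $\delta\ge 2$. First I would record two preliminaries. (i) Since $\diam(G)\le 2$, one has $\lambda(G)=\delta$ (this is classical, and it also drops out of the argument below), so $G$ is maximally edge-connected and ``super-$\lambda$'' means precisely that every edge-cut consisting of $\delta$ edges isolates a single vertex. (ii) $\diam(G)=2$ forces $G\ne K_n$. For any edge-cut $[X,\overline X]$ of $G$ I will use the identity $|[X,\overline X]|=\sum_{x\in X}\deg_G(x)-2e(X)$, where $e(X)$ denotes the number of edges of $G$ with both ends in $X$.

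Next I would prove the contrapositive of the ``only if'' direction: if $G$ is not super-$\lambda$, then $G$ contains a subgraph $K_\delta$ all of whose vertices have degree $\delta$ in $G$. Choose a non-trivial minimum edge-cut $[X,\overline X]$ with $2\le |X|\le |\overline X|$ and $|X|$ as small as possible. From the identity, using $\deg_G(x)\ge\delta$ and $e(X)\le\binom{|X|}{2}$, one gets $\delta=|[X,\overline X]|\ge |X|(\delta-|X|+1)$, which for $|X|\ge 2$ rearranges to $|X|\ge\delta$. For the opposite inequality, suppose $|X|\ge\delta+1$. The $\delta$ cut-edges are incident with at most $\delta<|X|$ vertices of $X$, so some $x_0\in X$ has all of its neighbours in $X$; since $\diam(G)=2$, every $v\in\overline X$ lies at distance $2$ from $x_0$ and hence has a neighbour in $N(x_0)\subseteq X$, which yields a cut-edge incident with $v$. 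Distinct vertices of $\overline X$ produce distinct cut-edges, so $|\overline X|\le\delta<|X|\le|\overline X|$, a contradiction. Hence $|X|=\delta$, and then equality must hold throughout the counting inequality, forcing $\deg_G(x)=\delta$ for every $x\in X$ and $e(X)=\binom{\delta}{2}$; that is, $G[X]\cong K_\delta$, the required subgraph.

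For the ``if'' direction I would again argue by contraposition: suppose $G$ has a vertex set $Y$ with $G[Y]\cong K_\delta$ and $\deg_G(y)=\delta$ for every $y\in Y$. Each $y\in Y$ has $\delta-1$ neighbours inside $Y$, hence exactly one neighbour in $\overline Y:=V(G)\setminus Y$, so $[Y,\overline Y]$ has exactly $\delta$ edges and is therefore a minimum edge-cut. It is non-trivial: $\overline Y=\emptyset$ would give $n=\delta$, which is impossible; and $|\overline Y|=1$ would mean all $\delta$ cut-edges land on that single vertex, giving $G\cong K_{\delta+1}$ and $\diam(G)=1$, contrary to hypothesis; so $|\overline Y|\ge 2$, while $|Y|=\delta\ge 2$. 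Thus $[Y,\overline Y]$ is a non-trivial minimum edge-cut, so $G$ is not super-$\lambda$.

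I expect the only genuine obstacle to be the inner step of the first direction: recognizing that the diameter-$2$ hypothesis lets one ``push'' from a vertex $x_0$ with no incident cut-edge out to every vertex of $\overline X$ through common neighbours, thereby bounding $|\overline X|$ and, together with the elementary degree count, pinning $|X|$ down to exactly $\delta$. The rest is bookkeeping, with a little care needed only for the degenerate configurations $G=K_{\delta+1}$ and $|\overline Y|\le 1$.
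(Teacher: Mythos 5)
This statement is quoted from \cite{WL99}; the paper itself gives no proof of it, so there is nothing in the paper to compare your argument against line by line --- what can be judged is whether your proof stands on its own, and it does. Both implications check out: the identity $|[X,\overline{X}]|=\sum_{x\in X}\deg_G(x)-2e(X)$ with $e(X)\le\binom{|X|}{2}$ gives $|X|\ge\delta$ for a non-trivial minimum edge-cut; the diameter-$2$ pigeonhole step (a vertex $x_0\in X$ missed by all cut-edges forces every $v\in\overline{X}$ to send its own cut-edge into $N(x_0)\subseteq X$, so $|\overline{X}|\le\delta$) rules out $|X|\ge\delta+1$; and the equality analysis at $|X|=\delta$ correctly forces $\deg_G(x)=\delta$ for all $x\in X$ and $G[X]\cong K_\delta$. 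Conversely, a $K_\delta$ whose vertices all have degree $\delta$ sends exactly one edge per vertex outside, giving a cut of size $\delta=\lambda$, and your treatment of the degenerate cases ($\overline{Y}=\emptyset$, and $|\overline{Y}|=1$ forcing $G\cong K_{\delta+1}$, impossible at diameter $2$) is exactly what makes that cut non-trivial. Your parenthetical claim that $\lambda(G)=\delta(G)$ ``drops out'' is also justified: running the same two counting steps on a hypothetical minimum cut of size less than $\delta$ yields the same contradiction.

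Three remarks. First, your standing assumption $\delta\ge 2$ is not just convenience but necessity: for $\delta=1$ every diameter-$2$ graph is super-$\lambda$ in the edge sense (a non-trivial bridge would put two vertices at distance at least $3$), yet it always contains a $K_1$ whose vertex has degree $1$, so the equivalence as literally stated fails for stars; this is an edge case of the cited statement, not a defect of your argument. Second, you interpret super-$\lambda$ in the standard way (every minimum edge-cut isolates a vertex, hence $\lambda=\delta$), which is the reading intended in \cite{WL99}; the paper's surrounding text additionally folds $\kappa=\lambda=\delta$ into ``maximally connected,'' and under that stricter reading the theorem would break down (two copies of $K_4$ glued along an edge have diameter $2$, $\kappa=2<3=\delta$, every minimum edge-cut trivial, and no $K_3$ of degree-$3$ vertices), so your choice of definition is the right one. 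Third, two cosmetic points: you have the labels ``if'' and ``only if'' interchanged (each of your contrapositives proves the other direction), though both implications are in fact proved, so the biconditional is established; and the minimality of $|X|$ imposed when choosing the cut is never used.
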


\begin{prop}\label{superlambda}
Let $G$ be a connected graph with $\diam (G)=2$. If $G$ is not super-$\lambda$, then $\pd (G)\leq 2$.
\end{prop}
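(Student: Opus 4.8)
The plan is to feed the hypothesis into the Wang--Li characterization \cite{WL99} and read off enough structure to write down a power dominating set of size $2$ explicitly.

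Write $\delta = \delta(G)$. Since $G$ is connected with $\diam(G)=2$ and is \emph{not} super-$\lambda$, the Wang--Li theorem gives a clique $W\subseteq V(G)$ with $|W|=\delta$ all of whose vertices have degree exactly $\delta$ in $G$. Each $w\in W$ is already adjacent to the other $\delta-1$ vertices of $W$, so it has precisely one neighbor $u_w$ outside $W$; moreover $W\neq V(G)$, since otherwise $G=K_\delta$ would have diameter at most $1$. I would then claim that for any fixed $w\in W$ the set $S=\{w,u_w\}$ power dominates $G$, and verify this in three short steps. \emph{Step 1 (domination):} after the first step of power domination the blue set equals $N[w]\cup N[u_w]=W\cup\{u_w\}\cup N[u_w]$. \emph{Step 2 (forcing the outside neighbors):} for every $w'\in W$, all neighbors of $w'$ other than $u_{w'}$ lie in $W$ and are already blue, so $w'$ forces $u_{w'}$; hence after Step 2 the blue set contains $W\cup N[u_w]\cup U$, where $U=\{\,u_{w'}:w'\in W\,\}$. \emph{Step 3 ($\diam=2$ closes it up):} given any $v\in V(G)$, either $v\in N[w]=W\cup\{u_w\}$, or there is $a$ with $w\sim a\sim v$ and $a\in N(w)=(W\setminus\{w\})\cup\{u_w\}$; if $a\in W\setminus\{w\}$ then $v\in N(a)\subseteq W\cup U$, and if $a=u_w$ then $v\in N[u_w]$. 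Thus $V(G)=W\cup U\cup N[u_w]$, which is contained in the blue set, so $\pd(G)\le|S|=2$.

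The argument is essentially the observation in Step 2 --- that the single ``outside'' neighbor of each clique vertex can be forced for free --- combined with the fact that the diameter bound confines every remaining vertex to $N[u_w]$. I do not foresee a genuine obstacle; the only care needed is with harmless degeneracies (some of the $u_{w'}$ may coincide, or may already be blue after Step 1, and $w$ itself may have no white neighbor), none of which affect the conclusion. It is worth noting that graphs of this type may well have $\gamma(G)>2$, so the zero-forcing step genuinely cannot be dropped; this is also why the same idea used without forcing would only recover the weaker statement via Theorem~\ref{kappa} in the case $\delta\le 2$.
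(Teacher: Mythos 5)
Your proposal is correct and follows essentially the same route as the paper: invoke the Wang--Li characterization to obtain a clique of $\delta(G)$ vertices each of degree $\delta(G)$, take one clique vertex together with its unique outside neighbor as the candidate set, let each clique vertex force its own outside neighbor, and use $\diam(G)=2$ to see that every remaining vertex is already covered. Your Step 3 just spells out more explicitly the case analysis that the paper's proof states in words, so there is nothing further to add.
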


\bpf
Since  $G$ is not super-$\lambda$, there exists a subgraph  $K_{\delta (G)}$ in which all of the vertices have degree equal to $\delta (G)$ in $G$. Let $v$ be a vertex in this $K_{\delta (G)}$, so $v$  has exactly one neighbor outside $K_{\delta (G)}$, say $w$. Then, $v$ dominates all vertices in $K_{\delta (G)}$ and  $w$. Since every vertex $u$ in $K_{\delta (G)}$ has degree in $G$ equal to $\delta (G)$ and  $u$ has  $\delta (G)-1$ dominated neighbors, $u$ can force its one remaining neighbor. Therefore,  all vertices in $K_{\delta (G)}$ and their neighbors are observed. Since $\diam (G)=2$,   $d(v,x)=1$ or $d(v,x)=2$ for every vertex $x\ne v$ in $G$. If  $d(v,x)=1$, then $x$ is dominated by $v$. If $d(v,x)=2$, then $x$ is a neighbor of a vertex in $N_G(v)$. Since the vertices in $N_G(v)$ that are in $K_{\delta (G)}$ have forced their neighbors, the only case in which $x$ is not observed is if it is a neighbor of $w$.  Thus $\{v,w\}$ is a power dominating set.
\epf

\begin{cor}\label{pdle2}
Assume that $G$ and $\Gc$ both have all components of order at least $3$.   Then $\pd(G)\le 2$ or $\pd(\Gc)\le 2$ if any of the conditions below is satisfied:\vspace{-3pt}
\ben
%\item\label{c1} $G$ is a cograph.
\item\label{c2}  $\diam(G)\geq 3$ or $\diam(\Gc)\geq 3$.\vspace{-3pt}
%\item\label{c4} $\delta(G)\le 3$ or $\delta(\Gc)\le 3$.
\item\label{c1a} $G$ or $\Gc$ is planar.\vspace{-3pt}
\item\label{c3} $\kappa(G)\le 3$  or $\kappa(\Gc)\le 3$. \vspace{-3pt}
\item\label{c4} $G$ or $\Gc$ is not super-$\lambda$. \vspace{-3pt}
\een
\end{cor}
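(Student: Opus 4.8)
The plan is to dispatch each of the four conditions by reducing it to one of the already-established tools, handling connected and disconnected cases separately, since every tool quoted (Theorems~\ref{diam3+}, \ref{kappapd}, Corollary~\ref{planar2pd}, Proposition~\ref{superlambda}) is stated either for diameter-2 graphs or for graphs of arbitrary connectivity. First I would record the basic dichotomy: for any graph $H$ in which every component has order at least $3$, either $H$ is disconnected, or $H$ is connected with $\diam(H)\le 2$, or $H$ is connected with $\diam(H)\ge 3$. Also note that under the hypothesis of the corollary, neither $G$ nor $\Gc$ has an isolated vertex, so Proposition~\ref{GBarBd1} and Theorem~\ref{kappapd} are available; and if $G$ is disconnected then $\diam(G)=\infty\ge 3$, and if moreover all components of $G$ have order $\ge 3$ then $\Gc$ is connected with $\diam(\Gc)\le 2$ (a standard fact), so condition~\eqref{c2} already covers all disconnected cases.

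For condition~\eqref{c2}: if $\diam(G)\ge 3$ (the case $\diam(\Gc)\ge 3$ is symmetric), Theorem~\ref{diam3+} gives $\gamma(\Gc)\le 2$, hence $\pd(\Gc)\le\gamma(\Gc)\le 2$. By the remark above, this also subsumes the situation where $G$ or $\Gc$ is disconnected. So from now on I may assume both $G$ and $\Gc$ are connected with $\diam(G)=\diam(\Gc)=2$. For condition~\eqref{c1a}: if $G$ is planar with $\diam(G)=2$, Corollary~\ref{planar2pd} gives $\pd(G)\le 2$ directly; same for $\Gc$. For condition~\eqref{c4}: if $G$ is connected with $\diam(G)=2$ and not super-$\lambda$, Proposition~\ref{superlambda} gives $\pd(G)\le 2$; same for $\Gc$. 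For condition~\eqref{c3}: suppose $\kappa(G)\le 3$ (symmetric for $\Gc$). Since $\Gc$ is connected it has no isolated vertices, so Theorem~\ref{kappapd} applies to $G$ (using $\diam(G)=2$) and yields $\pd(G)\le\kappa(G)-1\le 2$ or $\pd(\Gc)\le 2$; one should separately note that $\kappa(G)\ge 2$ here, since if $\kappa(G)\le 1$ then $G$ disconnected or $G$ has a cut vertex — but a connected diameter-2 graph with a cut vertex $v$ has the property that $G-v$ has exactly the non-neighbors-of-$v$ components as singletons or, more carefully, one checks $\pd(G)\le 1$ or $\pd(\Gc)\le 2$ directly, so it is cleaner to observe $\kappa(G)=1$ forces, via the cut vertex $v$, that $\Gc$ has diameter $\ge 3$ or handle it inside the Case~1 argument of Theorem~\ref{kappapd}, whose proof in fact never used $\kappa(G)\ge 2$.

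The main obstacle I anticipate is the bookkeeping at the boundary: making sure that in condition~\eqref{c3} the inequality $\pd(G)\le\kappa(G)-1$ actually delivers $\pd(G)\le 2$, which needs $\kappa(G)\le 3$ together with the tool being applicable, and confirming that the low-connectivity degenerate cases ($\kappa=0,1$) are genuinely covered — either by~\eqref{c2} (disconnected, or $\diam\ge 3$) or by the Case~1 branch of Theorem~\ref{kappapd}, which is stated to be ``the only possible case when $\kappa(G)=1$.'' Everything else is a direct citation. I would therefore structure the written proof as: (i) reduce to the situation where $G,\Gc$ are both connected of diameter exactly $2$, absorbing all disconnected and diameter-$\ge 3$ cases into~\eqref{c2} via Theorem~\ref{diam3+}; (ii) treat~\eqref{c1a} via Corollary~\ref{planar2pd}; (iii) treat~\eqref{c4} via Proposition~\ref{superlambda}; (iv) treat~\eqref{c3} via Theorem~\ref{kappapd}, noting $\kappa(G)-1\le 2$.

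\begin{proof}
Under the stated hypothesis neither $G$ nor $\Gc$ has an isolated vertex. We first observe that if $G$ (or $\Gc$) is disconnected, then since every component of $G$ has order at least $3$, its complement $\Gc$ is connected with $\diam(\Gc)=2$; in particular $\diam(G)=\infty\ge 3$, so condition~\eqref{c2} holds in that case. By Theorem~\ref{diam3+}, if $\diam(G)\ge 3$ then $\gamma(\Gc)\le 2$, hence $\pd(\Gc)\le\gamma(\Gc)\le 2$; symmetrically if $\diam(\Gc)\ge 3$ then $\pd(G)\le 2$. This proves the statement under condition~\eqref{c2} and reduces the remaining cases to the situation in which both $G$ and $\Gc$ are connected with $\diam(G)=\diam(\Gc)=2$.

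Assume henceforth $\diam(G)=\diam(\Gc)=2$. If condition~\eqref{c1a} holds, say $G$ is planar, then $\diam(G)=2$ and Corollary~\ref{planar2pd} give $\pd(G)\le 2$; the case of $\Gc$ planar is symmetric. If condition~\eqref{c4} holds, say $G$ is not super-$\lambda$, then since $G$ is connected with $\diam(G)=2$, Proposition~\ref{superlambda} gives $\pd(G)\le 2$; the case of $\Gc$ not super-$\lambda$ is symmetric.

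Finally suppose condition~\eqref{c3} holds, say $\kappa(G)\le 3$. Since $\Gc$ is connected, $\Gc$ has no isolated vertices, and since $\diam(G)=2$, Theorem~\ref{kappapd} applies to $G$ and yields $\pd(G)\le\kappa(G)-1$ or $\pd(\Gc)\le 2$. In the former case $\pd(G)\le\kappa(G)-1\le 2$. The case $\kappa(\Gc)\le 3$ is symmetric. This completes the proof.
\end{proof}
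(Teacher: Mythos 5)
Your proof is correct and follows essentially the same route as the paper: condition (1) via Theorem \ref{diam3+}, then reduction to the case $\diam(G)=\diam(\Gc)=2$ (disconnectedness and completeness being excluded by the component-order hypothesis), and then direct citations of Corollary \ref{planar2pd}, Theorem \ref{kappapd}, and Proposition \ref{superlambda} for the remaining conditions. The extra bookkeeping you add about $\kappa(G)\in\{0,1\}$ is harmless but unnecessary, since those cases are already absorbed by the diameter reduction and by the disjunctive conclusion of Theorem \ref{kappapd}.
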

\bpf Part %\eqref{c1}, 
 \eqref{c2} follows from %Propositions \ref{cograph}, 
 Theorem \ref{diam3+}.  Since $G$ and $\Gc$ both have all components of order at least $3$, $\diam(G)\ne 1$ and $\diam(\Gc)\ne 1$.  The case $\diam(G)\ge 3$ is covered by part \eqref{c2}.  So assume  $\diam(G)=2$. Then \eqref{c1a}, \eqref{c3}, and \eqref{c4} follow from Corollary \ref{planar2pd}, Theorem \ref{kappapd}, and  Proposition \ref{superlambda}, respectively. %Then \eqref{c1a} follows from Corollary \ref{planar2pd}, and \eqref{c3} follows from Proposition \ref{kappa}  for $\kappa(G)\le 2$ and Theorem \ref{kappapd}  for $\kappa(G)=3$.   %Proposition \ref{kappa} implies \eqref{c3}. 
\epf

Let $\maxpd$ be the family of graphs constructed by starting with a  connected graph $H$ and for each $v\in V(H)$ adding two new vertices $v'$ and $v''$, each adjacent to $v$ and possibly to each other but not to any other vertices.
The next result appears in \cite{ZKC06} without the floor function.

\begin{thm}{\rm \cite{ZKC06}}\label{n/3boundcor}
Suppose every component of a graph $G$ has order at least $3$ and $n$ denotes the order of $G$.  Then $\pd(G) \leq \lf\frac{n}{3}\rf.$  Furthermore, if $\pd(G)=\frac n 3$, then every component of $G$ is in $\maxpd\cup\{K_{3,3}\}$. \end{thm}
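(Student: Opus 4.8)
The plan is to prove the bound $\pd(G) \le \lfloor n/3 \rfloor$ by building an explicit power dominating set, and then analyze the equality case. Since the bound is additive over components (the power domination number of a disconnected graph is the sum over components, and a sum of floors is at most the floor of the sum), I would reduce to the case where $G$ is connected with $n \ge 3$. The key structural idea is to find a spanning forest of $G$ in which every tree has at least $3$ vertices, then use one vertex per tree as a power dominating ``seed'' and argue the forcing process sweeps out the rest.

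First I would fix a spanning tree $T$ of $G$ and root it. The goal is to partition $V(G)$ into parts $V_1, \dots, V_k$ with $k \le \lfloor n/3 \rfloor$, where each $V_i$ induces a connected subgraph of $G$ of order at least $3$, and to choose one vertex $s_i$ in each $V_i$ so that $S = \{s_1, \dots, s_k\}$ is a power dominating set. A clean way to get such a partition is a greedy/peeling argument on $T$: repeatedly locate a deepest vertex $v$ whose subtree $T_v$ has at least $3$ vertices but all of whose children have subtrees of size at most $2$; the subtree $T_v$ then has order $3$, $4$, or $5$, carve it off as one part, and recurse on what remains. One must handle the leftover at the root (a piece of size $1$ or $2$) by merging it into an adjacent already-formed part, which keeps all parts of size $\ge 3$ and only decreases $k$. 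This yields $k \le \lfloor n/3 \rfloor$.

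Next I would verify that choosing $s_i$ appropriately in each part makes $S$ a power dominating set. Within the tree $T$, once $V_i$ is a connected subtree of order at least $3$ rooted at some vertex, a single well-chosen vertex (e.g., a vertex adjacent to the ``entry point'' of the part, or the part's root together with the domination step) dominates enough of $V_i$ that the remaining vertices of $V_i$ can be forced along the tree edges one at a time, since in a tree each vertex has few neighbors. I would argue that after the domination step $N[S]$ already covers a large fraction of each part, and then the forcing rule propagates through the tree edges of $T$ restricted to $V_i$; because $T$ is a tree, leaves and near-leaves get forced first, peeling inward. The extra edges of $G$ not in $T$ only help. The main obstacle here is making the ``one vertex per part suffices'' argument genuinely tight for all part shapes of order $3, 4, 5$ and correctly handling the merged part at the root — this bookkeeping is where a careless argument would break, so I would organize it via the $\maxpd$-type local structure (a vertex with two pendant-ish neighbors is exactly the worst case) to see that $3$ vertices per seed is the true ratio.

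Finally, for the equality statement: if $\pd(G) = n/3$ with every component of order $\ge 3$, then by additivity each component must itself achieve the ratio $\pd = (\text{order})/3$, so it suffices to characterize connected graphs with $\pd(G) = n/3$. I would run the peeling argument above and observe that equality $k = n/3$ forces every part to have order exactly $3$ and forces the root leftover to be empty (no merging), and moreover forces the ``one seed per part'' analysis to be tight in every part. Tightness of the local analysis should pin down that each part of order $3$ is attached to the rest of $G$ through essentially one vertex with its two companions as (near-)pendant vertices — i.e., the block structure of $\maxpd$ — unless the whole graph is a single exceptional configuration, which turns out to be $K_{3,3}$ (here $n = 6$, $\pd = 2 = 6/3$, but $K_{3,3} \notin \maxpd$ because it has no cut vertex of the required type). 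I expect the delicate point to be ruling out all other connected graphs on $n$ vertices with $\pd = n/3$ besides $\maxpd \cup \{K_{3,3}\}$: one must show that any connected graph achieving the ratio either has the recursive pendant-pair structure defining $\maxpd$ or is forced, by a short case analysis on small diameter / connectivity using the tools of this section, to be $K_{3,3}$.
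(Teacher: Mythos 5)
Note first that the paper does not prove this theorem at all: it is quoted verbatim from \cite{ZKC06}, so the only comparison available is with the original argument there, which is a global induction on connected graphs rather than anything resembling your spanning-tree partition.

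Your proposal has a genuine gap at its central step. The claim that ``the extra edges of $G$ not in $T$ only help'' is false for power domination: a vertex can force only when it has exactly one unobserved neighbor \emph{in all of $G$}, so edges outside the spanning tree can block forces, and a set that power dominates a spanning tree need not power dominate $G$. The theorem's own exceptional graph witnesses this: $K_{3,3}$ has the spanning path $P_6$ with $\pd(P_6)=1$, yet $\pd(K_{3,3})=2$, because after dominating from any single vertex every potential forcer has two unobserved neighbors. The same phenomenon defeats the ``one seed per part'' plan: after the domination step, a vertex of a part $V_i$ that is adjacent to unobserved vertices in another part $V_j$ cannot force inside $V_i$, and distinct parts can deadlock on each other, so verifying the forcing ``within each part along tree edges'' does not establish that $S$ is a power dominating set of $G$. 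This is precisely why the true proof must argue globally and why $K_{3,3}$ shows up as an extremal graph, something your construction could never detect. In addition, the equality characterization --- that $\pd(G)=\frac n3$ forces every component into $\maxpd\cup\{K_{3,3}\}$ --- is the harder half of the theorem, and your sketch only asserts that tightness ``should pin down'' the pendant-pair structure; no argument is given that a connected extremal graph other than $K_{3,3}$ actually has the recursive structure of $\maxpd$. As written, the proof would fail at the verification step and is incomplete for the characterization.
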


The method used in the construction of  a graph $G\in\maxpd$ implies that $\pd(\Gc)=1$ if we start with a graph on at least 2 vertices:

\begin{lem}\label{foliatecomplem} 
Suppose $G$ is a graph  having vertices $w,u, v$, $v'$, and $v''$ such that $N[v']=N[v'']= \{v, v',v''\}$, $u\in N(v)$ and $w\not\in N(v)$.  Then $\pd(\Gc) = 1$.  
\end{lem}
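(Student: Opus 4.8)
The plan is to exhibit a single vertex of $\Gc$ that power-dominates $\Gc$, by showing that $\{v'\}$ (or equivalently $\{v''\}$) works. The key structural fact to exploit is the hypothesis $N[v']=N[v'']=\{v,v',v''\}$: in $G$, the vertices $v'$ and $v''$ see only $v$ and each other, and $v$ may or may not be adjacent to $v''$ — but in any case, $v'$ is adjacent in $G$ to $v$ and possibly $v''$, and to nothing else. Passing to the complement, $v'$ is adjacent in $\Gc$ to \emph{every} vertex of $G$ except $v$ and (possibly) $v''$. So $N_{\Gc}[v']$ already contains all of $V(G)$ except possibly $v$ and $v''$.

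First I would compute $N_{\Gc}[v']$ precisely. Since in $G$ we have $N_G(v')\subseteq\{v,v''\}$, in $\Gc$ we get $N_{\Gc}(v') \supseteq V(G)\setminus\{v',v,v''\}$, and $v'\in N_{\Gc}[v']$ trivially; the only vertices that could be missing from $N_{\Gc}[v']$ are $v$ and $v''$. Note $v$ is definitely missing (since $v\in N_G(v')$, so $v\notin N_{\Gc}(v')$), while $v''$ is missing iff $v''\notin N_G(v')$, i.e.\ iff $v'$ and $v''$ are nonadjacent in $G$. Either way, $|V(G)\setminus N_{\Gc}[v']|\le 2$, and in fact this set is contained in $\{v,v''\}$.

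Next I would run the forcing steps in $\Gc$ starting from the blue set $N_{\Gc}[v']$. The vertex $u$ is white-or-blue; since $u\in N_G(v)$ we have $u\notin N_{\Gc}(v)$, and since $u\ne v',v''$ (as $u\in N(v)$ but $v',v''$ are only adjacent to $v$ and each other — if $u=v'$ then $v'\in N(v)$ which is consistent, but then $u$ is blue anyway; let me just say $u\notin\{v,v''\}$ so $u$ is already blue, and if $u=v'$ it is also blue). So $u$ is blue. Now $u$ is a blue vertex whose neighborhood in $\Gc$ excludes $v$; in particular $v$ is a neighbor of $u$ in $\Gc$? No — $u$ is \emph{non}-adjacent to $v$ in $\Gc$. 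Instead I would use the vertex $w$: since $w\notin N_G(v)$ and $w\ne v$ (as $w\notin N(v)$ but also $w\ne v$ since $v\in N(v')$... I should check $w\ne v',v''$: since $N(v')=\{v,v',v''\}$ and $w\notin N(v)$, if $w=v'$ then $w\in N(v)$, contradiction, similarly $w\ne v''$, and $w\ne v$). Then $w\in N_{\Gc}(v)$, so $w$ is blue and $w$ is adjacent in $\Gc$ to $v$. If $v$ is still white, I need $w$ to have exactly one white neighbor. The candidates for white vertices are only $v$ and possibly $v''$. So I would argue: (i) if $v''\in N_{\Gc}[v']$ already (i.e.\ $v',v''$ nonadjacent in $G$ — wait, that's backwards; let me recheck: $v''\in N_{\Gc}(v')$ iff $v''\notin N_G(v')$, and $N_G(v')\subseteq\{v,v''\}$, so $v''\in N_{\Gc}(v')$ iff $v'\not\sim v''$ in $G$), then only $v$ is white; since $u$ (or $w$) is a blue vertex adjacent to $v$ in $\Gc$ with no other white neighbor, it forces $v$ and we are done. (ii) If instead $v'\sim v''$ in $G$, then both $v$ and $v''$ may be white; but $v$ is adjacent in $G$ to \emph{neither} $v'$... no wait, $v\sim v'$ in $G$. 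Hmm: in $\Gc$, is $v$ adjacent to $v''$? In $G$, $N_G(v'')\subseteq\{v,v'\}$; since currently $v'\sim v''$ in $G$, and $v\sim v''$ in $G$ iff $v\in N_G(v'')$ which may or may not hold. I would split on whether $v\sim v''$ in $G$: if not, then in $\Gc$ $v\sim v''$, so $\{v,v''\}$ is an edge of $\Gc$, both white, and a blue neighbor of $v$ (namely $w$ or $u$) that is non-adjacent to $v''$ in $\Gc$... $v''$ is adjacent in $\Gc$ to everything except $v'$ and (here) $v$; so $u$ is adjacent to $v''$ in $\Gc$, not helpful, but $v'$ is blue and $N_{\Gc}(v')=V\setminus\{v,v''\}$ so $v'$ has \emph{two} white neighbors... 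Instead, use $w$: $w\sim v$ and $w\sim v''$ in $\Gc$ (since $w\notin\{v',v,v''\}$), so $w$ has two white neighbors, no force. Here I would instead find a blue vertex adjacent to exactly one of $v,v''$: e.g.\ $v$ is white but some blue vertex $z$ with $z\sim_{\Gc} v$ and $z\not\sim_{\Gc}v''$; since $z\not\sim_{\Gc}v''$ means $z\sim_G v''$ means $z\in\{v,v'\}$, so $z=v'$, but $v'\not\sim_{\Gc}v$. Dead end — so in this sub-case I would instead color from $v''$'s side: the only $\Gc$-non-neighbors of $v''$ are $v'$ and $v$; thus once $v$ becomes blue, $v''$'s only white... circular.

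\textbf{The main obstacle} is exactly this last configuration ($v'\sim v''$ and $v\not\sim v''$ in $G$, so in $\Gc$ the set $\{v,v''\}$ forms an isolated-looking edge attached to the rest), and I expect the clean resolution is: observe that in $\Gc$, $N_{\Gc}(v)$ contains $w$ and indeed \emph{every} vertex of $G\setminus N_G[v]$, while $N_{\Gc}(v'')$ contains $u$ and everything except $v'$; since $\Gc$ restricted to $V\setminus\{v,v''\}$ is entirely blue and $v$ has a blue neighbor not adjacent to $v''$ OR vice versa — and this is forced by the assumption $u\ne w$: $u\sim_G v$ so $u\not\sim_{\Gc}v$, hence $u\sim_{\Gc}v''$ only; meanwhile $w\sim_{\Gc}v$. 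The vertex $w$ is blue, $w\sim_{\Gc}v$, and $w\sim_{\Gc}v''$ — two white neighbors — so $w$ cannot force. But consider: after no forces, is $\{v,v''\}$ really stuck? $v$'s $\Gc$-neighbors are all of $V\setminus N_G[v]\supseteq\{w\}$ plus possibly others, all blue; $v$ has $\ge 1$ blue neighbor but we need some blue vertex with $v$ as its unique white neighbor. Take any $x\in V$ with $x\sim_{\Gc}v$ and $x\not\sim_{\Gc}v''$: need $x\sim_G v''$, i.e.\ $x\in\{v,v'\}\setminus\{v\}=\{v'\}$, and $v'\not\sim_{\Gc}v$. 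So genuinely no vertex forces $v$ first, and symmetrically none forces $v''$ first. Therefore $\{v'\}$ does \emph{not} power-dominate $\Gc$ in this configuration — meaning I have the wrong starting set. The fix: start instead with $\{u\}$ or with $\{w\}$, or observe that the hypotheses guarantee $v\sim_G v''$ after all is \emph{not} assumed, so I should re-read: the correct proof almost surely starts the power-dominating set at a vertex like $u$ (which in $\Gc$ misses only $v$ from its closed neighborhood, since $N_G(u)\ni v$ but $u$ may have many $G$-neighbors — no, that's worse). The honest plan is: \emph{try $\{v'\}$; it dominates $V\setminus\{v,v''\}$; then $v$ gets forced by any $x$ with $x\sim_{\Gc} v$, $x\not\sim_{\Gc}v''$ if one exists, else $v''$ gets forced first by $u$ (since $u\sim_{\Gc}v''$ and $u$'s only possible white neighbor is $v''$ because $u\sim_{\Gc}$ everything but $v$, and if $v$ is already... ).} I will carry out the case analysis on the adjacencies among $\{v,v',v''\}$ as above, and in each of the (at most four) cases pick the appropriate single seed vertex among $v',v'',u$; the obstacle is ensuring a forcing chain reaches the last one or two white vertices, which comes down to the fact that $u\ne w$ provides a blue vertex on the "$v$ side" and a blue vertex on the "$v''$ side" with complementary adjacencies.
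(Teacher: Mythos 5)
Your central idea (seed with $\{v'\}$ and note $N_{\Gc}[v']=V(G)\setminus\{v,v''\}$) is the paper's idea, but your execution has a genuine gap that you never close. First, you misread the hypothesis: $N[v']=N[v'']=\{v,v',v''\}$ is an equality of \emph{closed} neighborhoods, so it pins the adjacencies down completely, namely $N_G(v')=\{v,v''\}$ and $N_G(v'')=\{v,v'\}$; in particular $v\sim v''$ and $v'\sim v''$ in $G$ are forced, and the configurations you spend most of the write-up on ($v'\not\sim v''$, or $v\not\sim v''$) simply contradict the hypothesis. There is only one case, so no case analysis on the adjacencies among $\{v,v',v''\}$ is needed.

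Second, and more importantly, within your analysis you stall and even assert something false. After the domination step from $\{v'\}$ the white vertices are $v$ and $v''$, and the forcing chain is: $u$ forces $v''$ (because $u\in N_G(v)$ gives $u\not\sim_{\Gc}v$, while $u\notin N_G(v'')=\{v,v'\}$ gives $u\sim_{\Gc}v''$, so $v''$ is the unique white $\Gc$-neighbor of the blue vertex $u$), and then $w$ forces $v$ (since $w\notin N_G(v)$ gives $w\sim_{\Gc}v$ and $v$ is the last white vertex). You searched only for a blue vertex that could force $v$ first, declared a ``dead end,'' and then claimed ``symmetrically none forces $v''$ first'' --- that symmetry is spurious, because $v$ has $G$-neighbors outside $\{v',v''\}$ (namely $u$) while $v''$ does not; indeed $u$ does force $v''$ even in your extraneous sub-case. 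You gesture at this in a trailing parenthesis but never carry it out, and your conclusion that $\{v'\}$ ``does not power-dominate'' in that configuration is wrong. As written, the proposal ends with an unexecuted plan rather than a proof; the fix is simply to use the exact neighborhoods forced by the hypothesis and run the two forces $u\to v''$, $w\to v$, which is the paper's argument.
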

\begin{proof}
 In $\Gc$, $u$ is not adjacent to $v$ but is adjacent to $v'$ and to $v''$.  Then  $\{v'\}$ is  a power dominating set for $\Gc$, because $u \in N_{\overline{G}}[v']=V(G)\setminus\{v,v''\}$ and  $u$ forces $v''$ in $\Gc$, and then $w$  forces $v$ in $\Gc$.  Thus  $\gamma_{p}(\overline{G}) =1$.    \end{proof}

\begin{prop}\label{foliatecomp} 
Suppose $G$ is a graph of order $n$ such that every component of $G$ and $\Gc$ has order at least $3$ and $\pd(G)=\frac n 3$. Then $\pd(\Gc)\le 2$.  
If, in addition, $G$   has a component $G_o\in \maxpd$ of order at least $6$, then $\pd(\Gc) = 1$.
\end{prop}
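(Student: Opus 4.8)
The plan is to prove the ``in addition'' statement first, since it is the more concrete one, and then to deduce the bound $\pd(\Gc)\le 2$ from it together with the structure theorem, Theorem~\ref{n/3boundcor}.

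For the ``in addition'' claim, suppose $G$ has a component $G_o\in\maxpd$ built from a connected base graph $H$. Since $|V(G_o)|=3|V(H)|$, the hypothesis $|V(G_o)|\ge 6$ forces $|V(H)|\ge 2$, so $H$ contains an edge $uv$. Let $v',v''$ be the two vertices attached to $v$ in the construction and let $u'$ be one of the two attached to $u$; then $N_G(v')\subseteq\{v,v''\}$, $N_G(v'')\subseteq\{v,v'\}$, while $u'$ is adjacent in $G$ only to $u$ and possibly $u''$, so in particular $u\in N_G(v)$ and $v\notin N_G(u')$. I would then split on whether the construction joined $v'$ and $v''$. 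If $v'v''\in E(G)$, then $N_G[v']=N_G[v'']=\{v,v',v''\}$, and Lemma~\ref{foliatecomplem} applies with $(w,u,v,v',v'')=(u',u,v,v',v'')$, giving $\pd(\Gc)=1$. If $v'v''\notin E(G)$, then $N_G(v')=\{v\}$, so $N_{\Gc}[v']=V(G)\setminus\{v\}$; thus the power domination process on $\Gc$ started from $\{v'\}$ observes every vertex except $v$ after the domination step, and then $u'$, which is observed and (since $v\notin N_G(u')$) adjacent to $v$ in $\Gc$, forces the unique remaining white vertex $v$. Hence $\{v'\}$ is a power dominating set of $\Gc$ and $\pd(\Gc)=1$.

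For the first statement I would split on connectivity of $G$. If $G$ is disconnected, then $\diam(G)=\infty\ge 3$, so Theorem~\ref{diam3+} gives $\gamma(\Gc)\le 2$ and hence $\pd(\Gc)\le\gamma(\Gc)\le 2$. If $G$ is connected, then Theorem~\ref{n/3boundcor}, applied to the single component $G$, yields $G\in\maxpd\cup\{K_{3,3}\}$. If $G=K_{3,3}$, then $\Gc=K_3\du K_3$, so $\pd(\Gc)=1+1=2$. If $G\in\maxpd$ with base $H$ on $m$ vertices, then $m\ge 2$: otherwise $G$ is $K_3$ or $P_3$, whose complement ($\overline{K_3}$, three isolated vertices, or $K_2\du K_1$) would have a component of order less than $3$, contradicting the hypothesis on $\Gc$. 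Therefore $|V(G)|=3m\ge 6$, so $G$ is itself a component of $G$ lying in $\maxpd$ and of order at least $6$; the ``in addition'' case, already proved, then gives $\pd(\Gc)=1\le 2$.

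I expect the only delicate points to be organizational: checking that the case split produced by Theorem~\ref{n/3boundcor} is exhaustive, noticing that $K_{3,3}$ must be excluded from the sharper conclusion (its complement has power domination number $2$, not $1$, which is exactly why the first statement only asserts the bound $2$), and handling both flavours of a $\maxpd$ component, since Lemma~\ref{foliatecomplem} as stated covers only the case in which the attached pair $v',v''$ is joined. None of the steps involves substantial computation.
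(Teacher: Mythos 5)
Your proposal is correct and takes essentially the same route as the paper: the disconnected case via Theorem \ref{diam3+}, the $K_{3,3}$ exception, and the $\maxpd$ structure supplied by Theorem \ref{n/3boundcor} combined with the twin-vertex configuration of Lemma \ref{foliatecomplem}. The only (harmless, arguably cleaner) difference is that you apply Lemma \ref{foliatecomplem}, together with a short direct argument replacing the citation of Proposition \ref{GBarBd1} in the case $v''\notin N(v')$, to the whole graph $G$ at once, rather than first proving $\pd(\overline{G_o})=1$ and then extending across components as the paper does.
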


\begin{proof}
Necessarily, $n$ is a multiple of 3 and $n\ne 3$.  If $G$ has 2 or more components, then $\gamma(\Gc)\le 2$ by Theorem \ref{diam3+}.  If $G=K_{3,3}$, then $\pd(\Gc)=2$.  Now suppose $G$ has a component $G_o\in \maxpd$ of order at least $6$ (this includes the case where $G$ has only one component that is not $K_{3,3}$).  Then $\pd(\overline{G_o})=1$ by Lemma \ref{foliatecomplem} and  Proposition \ref{GBarBd1} (for the case $v''\not\in N(v')$).  In $\Gc$, any vertex in $G_o$ dominates any vertex in a different component, so the one vertex that power dominates $\overline{G_o}$ also power dominates $\Gc$, and  $\pd(\Gc)=1$.  \end{proof}

%\begin{cor}\label{diam3+pd} Let $G$ be a  graph of order $n$ such that all components of $G$ and $\Gc$ have order at least $3$. If  $ \diam(G)\geq 3$ or $\diam(\Gc)\geq 3$.  Then $\pd(G) \cdot \pd(\Gc) \leq 2\lf \frac{n}{3}\rf.$ \end{cor}
%\bpf Without loss of generality, let us assume $\diam(G)\geq 3$. By Proposition \ref{diam3+} $\pd (\Gc)\leq \gamma (\Gc)\leq 2.$ Since $G$ is connected, $\pd(G) \leq \lf \frac{n}{3}\rf$ and it follows that $\pd(G) \cdot \pd(\Gc) \leq 2\lf \frac{n}{3}\rf.$ \epf

%Finally we list some additional results for domination number and consequences that are used in Section \ref{sNGprod}. 
\begin{thm}\label{domn4} {\rm\cite{HV06, MV14}}
Suppose $G$ is a graph of order $n$ with $\diam(G)=2$.  If $n\ge 24$, then $\gamma(G)\le \lf \frac n 4\rf$, and $\gamma(G)\le \lf \frac n 4\rf+1$ for $n\le 23$.
\end{thm}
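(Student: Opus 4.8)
The plan is to split on the minimum degree $\delta:=\delta(G)$: the two ends of the range fall to tools already available in the excerpt plus one classical inequality, and the middle collapses to a finite check.

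\emph{Small minimum degree.} Suppose first $\delta\le\lfloor n/4\rfloor$. Since $\diam(G)=2$, the closed neighborhood of \emph{any} vertex dominates $G$ (a vertex at distance $2$ from $v$ has a neighbor in $N(v)$), so $\gamma(G)\le\delta+1$; more sharply, Theorem~\ref{kappa} together with $\kappa(G)\le\lambda(G)\le\delta(G)$ gives $\gamma(G)\le\kappa(G)\le\delta\le\lfloor n/4\rfloor$. This settles the bound for $n\ge 24$ in this case and, a fortiori, the weaker bound $\lfloor n/4\rfloor+1$ needed for $n\le 23$.

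\emph{Large minimum degree.} For large $\delta$ I would invoke the classical degree bound $\gamma(G)\le n\,(1+\ln(\delta+1))/(\delta+1)$ (Arnautov, Payan, Lov\'asz), valid for every graph. Since the right-hand side grows only logarithmically in $\delta$ while $\delta+1$ is linear, a short estimate gives $n\,(1+\ln(\delta+1))/(\delta+1)<n/4$ as soon as $\delta\ge 14$, hence $\gamma(G)\le\lfloor n/4\rfloor$ in that range, for all $n$.

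\emph{The remaining window.} The two cases leave only $\lfloor n/4\rfloor+1\le\delta\le 13$, which forces $\lfloor n/4\rfloor\le 12$, i.e.\ $n\le 51$; the theorem is thereby reduced to a finite list of pairs $(n,\delta)$. Here I would combine (i) sharper domination bounds for each fixed small $\delta$ --- the McCuaig--Shepherd bound $\gamma(G)\le 2n/5$ for $\delta=2$ (away from its finitely many exceptions), Reed's $\gamma(G)\le 3n/8$ for $\delta=3$, and their counterparts for $4\le\delta\le 13$ --- with (ii) the rigidity of diameter $2$ (any two nonadjacent vertices share a neighbor, so neighborhoods of low-degree vertices are very efficient dominators). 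Working through the finitely many $(n,\delta)$ this way yields $\gamma(G)\le\lfloor n/4\rfloor$ whenever $n\ge 24$; for $n\le 23$ it yields $\gamma(G)\le\lfloor n/4\rfloor+1$ and simultaneously locates the sporadic extremal graphs realizing equality (for instance $C_4$, $C_5$, and the Petersen graph), confirming that $24$ is the correct threshold.

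\emph{Where the difficulty lies.} The first two steps are routine. The real work --- carried out carefully, and partly by computer, in the cited references \cite{HV06, MV14} --- is the ``remaining window'' step: it is finite but not covered by any single clean inequality, and verifying that no diameter-$2$ graph of order at least $24$ attains $\lfloor n/4\rfloor+1$ amounts to a delicate case analysis of moderate-order, moderate-minimum-degree graphs.
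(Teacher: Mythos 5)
The paper does not actually prove this statement: Theorem \ref{domn4} is imported verbatim from \cite{HV06, MV14} as a tool, so there is no in-paper argument to compare against and your proposal has to stand on its own. Its outer reduction is sound: for $\delta(G)\le\lfloor n/4\rfloor$ you correctly get $\gamma(G)\le\kappa(G)\le\delta(G)\le\lfloor n/4\rfloor$ from Theorem \ref{kappa} together with $\kappa(G)\le\lambda(G)\le\delta(G)$, and for $\delta(G)\ge 14$ the Arnautov--Payan--Lov\'asz bound does give $\gamma(G)\le n(1+\ln(\delta+1))/(\delta+1)<n/4$, so the whole theorem reduces to the window $\lfloor n/4\rfloor+1\le\delta(G)\le 13$, i.e.\ $n\le 51$, and for $n\ge 24$ this forces $7\le\delta(G)\le 13$.

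The genuine gap is that ``remaining window,'' which is exactly the substance of the theorem and is not proved. The tools you name cannot close it: McCuaig--Shepherd gives $2n/5$ and Reed gives $3n/8$, both strictly larger than $n/4$ (and irrelevant anyway, since the window for $n\ge 24$ has $\delta\ge 7$), while the degree-based ``counterparts'' for $7\le\delta\le 13$ (Caro--Roditty type bounds) still exceed $n/4$ through most of that range; ``the rigidity of diameter $2$'' is an intuition, not an argument. Nor is this a finite check one can simply work through or brute-force: diameter-two graphs with $24\le n\le 51$ and $7\le\delta\le 13$ are far too numerous to enumerate, and what \cite{HV06} and especially \cite{MV14} actually do there is a substantial structural case analysis --- that analysis \emph{is} the theorem for $n\ge 24$. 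The same incompleteness affects your claim of $\gamma(G)\le\lfloor n/4\rfloor+1$ for $n\le 23$ inside the window (your first step, via $\gamma\le\kappa\le\delta$, only covers $\delta\le\lfloor n/4\rfloor+1$). So as written the proposal is a correct and tidy reduction plus a citation of \cite{HV06, MV14} for the hard part, not a proof of the statement.
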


\begin{rem}\label{twins}  Let $G$ be a graph.  Suppose $W$ is a set of at least two vertices such that no vertex outside $W$ is adjacent to exactly one vertex in  $W$.  
Then every power dominating set $S$ must contain either a neighbor of $W$ or a vertex in $W$, because no  vertex outside of $W$ can force a vertex in $W$ unless  all but one  of the vertices in $W$ have already been power dominated.
%Let $G$ be a graph.  Suppose $W$ is a set of at least two vertices such that every vertex in $W$ has exactly the same neighbors outside $W$.  Then every power dominating set $S$ must contain either a neighbor of $W$ or a vertex in $W$, because no  vertex outside of $W$ can force a vertex in $W$ unless  all but one  of the vertices in $W$ have already been power dominated.
\end{rem}

For $r\ge 2$, the $r$th {\em necklace} graph, denoted by $N_r$, is constructed from $r$ copies of $K_4-e$ ($K_4$ with an edge deleted) by arranging them in a cycle and adding an edge between  vertices of degree $2$ in two consecutive copies of $K_4-e$.

\begin{thm}\label{3reg} {\rm\cite{{DHLMR13}}}
Suppose $G$ is a connected $3$-regular graph of order $n$ and $G\ne K_{3,3}$.  Then $\pd(G)\le \lf \frac n 4\rf$, and this bound is attained for arbitrarily large $n$ by $G=N_r$.
\end{thm}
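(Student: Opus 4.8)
The plan is to work with the reformulation $\pd(H)=\min\{|S|:N_H[S]\text{ is a zero forcing set of }H\}$, which is immediate from the two-step definition of $PD(S)$ (Step~1 colors $N[S]$ blue and Step~2 is exactly the color change rule). Since $|N_G[v]|=4$ for every vertex of a cubic graph, the target $\lf n/4\rf$ asserts that $G$ has a zero forcing set coverable by the closed neighborhoods of at most $\lf n/4\rf$ vertices, and when those closed neighborhoods are pairwise disjoint this is best possible; the necklaces $N_r$ will realize equality. For the upper bound I would argue by minimal counterexample: suppose $G$ is a connected cubic graph with $G\ne K_{3,3}$, $\pd(G)>\lf n/4\rf$, and $n$ smallest possible. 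The cases $n\le 6$ are settled by inspection (for instance $\pd(K_4)=1$, and the triangular prism has $\pd=1$ --- start at one triangle, and the two uncovered vertices of the other triangle are forced across the matching), and $K_{3,3}$ is excluded, so $n\ge 8$.

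The core of the argument is to exhibit in such a $G$ a \emph{reducible configuration}: a small induced subgraph $F$ with $|V(F)|\equiv 0\pmod 4$ --- generically $|V(F)|=4$, such as $F=N_G[v]$ for a well-chosen $v$, a copy of $K_4-e$, or the vertices around a bridge --- so that deleting $V(F)$ and inserting a carefully chosen set of ``patching'' edges among the vertices that lost a neighbor yields a cubic graph $G'$ on $n-|V(F)|$ vertices with the lifting property: whenever $S'$ power dominates $G'$, the set $S'$ together with $|V(F)|/4$ suitably chosen vertices of $F$ power dominates $G$. Granting this, applying minimality to $G'$ (or to each of its components, which are smaller connected cubic graphs, none allowed to be $K_{3,3}$) gives $\pd(G)\le\pd(G')+|V(F)|/4\le\lf(n-|V(F)|)/4\rf+|V(F)|/4=\lf n/4\rf$, a contradiction. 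Locating the configuration is a finite local case analysis organized by how many edges of $G[N_G[v]]$ are present: $G[N_G[v]]=K_4$ forces $G=K_4$; a triangle or a $K_4-e$ yields a reduction on that subgraph; and if $G$ is triangle-free one reduces on $N_G[v]$ itself, choosing the patching matching on the (generically six) second neighbors so that forcing is not obstructed. The remaining, genuinely irreducible case should be a highly structured graph (large girth, no small edge cuts), handled directly: a maximal $2$-packing $S$ (a set of vertices with pairwise disjoint closed neighborhoods) has $4|S|\le n$, and the girth and connectivity hypotheses force each neighbor of a packing center to acquire a unique white neighbor, which triggers a path-propagation of forces that blues all of $V$; thus $N_G[S]$ is a zero forcing set of size at most $n/4$, again a contradiction.

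For the sharpness claim I would verify $\pd(N_r)=r$ directly. Writing $N_r$ as $r$ copies of $K_4-e$ with the two degree-$2$ vertices of consecutive copies joined: in each copy the closed neighborhood of either of the two vertices having degree $3$ inside the copy equals that copy, so picking one such vertex per copy produces a dominating --- hence power dominating --- set, giving $\pd(N_r)\le r$. Conversely, in a fixed copy the pair $W$ of its degree-$3$ vertices has the property that no vertex outside $W$ is adjacent to exactly one vertex of $W$ (the only outside neighbors of $W$ are the two degree-$2$ vertices of that copy, each adjacent to both), so by Remark~\ref{twins} every power dominating set of $N_r$ meets each of the $r$ pairwise disjoint copies, giving $\pd(N_r)\ge r$. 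Since $|V(N_r)|=4r$, this yields $\pd(N_r)=\lf|V(N_r)|/4\rf$ for every $r\ge 2$, so the bound is attained for arbitrarily large order.

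The main obstacle is the inductive step. Proving that \emph{every} connected cubic graph other than $K_4$ and $K_{3,3}$ admits a reducible configuration requires a lengthy case analysis (graphs with bridges, with small cuts, and with many triangles each need separate treatment), and, more delicately, the patching edges have to be chosen so that a power dominating set of the reduced graph truly lifts: those edges are absent in $G$, so the order in which forces are applied can change and must be controlled. A linked subtlety is that the reduction must be arranged never to produce $K_{3,3}$ (or the finitely many cubic graphs reducing to it must be dispatched by hand), since the exceptional value $\pd(K_{3,3})=2$ does not survive the arithmetic above. Finally, the direct treatment of the irreducible high-girth case is precisely where the connectivity and girth hypotheses, and the emergence of $K_{3,3}$ as the unique exception, must be pinned down.
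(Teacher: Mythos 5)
This theorem is not proved in the paper at all: it is quoted from \cite{DHLMR13}, and the only new content here is Lemma \ref{Nrlem} about $\overline{N_r}$. Of your two halves, the sharpness half is correct and complete: choosing one vertex of degree $3$ (inside its copy) from each $K_4-e$ gives a dominating set of size $r$, and applying Remark \ref{twins} to the pair $W$ of degree-$3$ vertices of a copy (whose only outside neighbors are the two degree-$2$ vertices of that copy, each adjacent to both) forces every power dominating set to meet each of the $r$ disjoint copies, so $\pd(N_r)=r=\lfloor 4r/4\rfloor$. That is exactly the kind of argument the paper itself uses for $\overline{N_r}$.

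The upper bound half, however, has a genuine gap: it is a program, not a proof. The entire content of Theorem \ref{3reg} is the claim that every connected cubic graph other than $K_{3,3}$ admits your ``reducible configuration'' together with a lifting lemma that survives the patching edges and never produces $K_{3,3}$; you state this as the plan and then acknowledge it is unproven, so nothing is actually established. Worse, the fallback you offer for the ``irreducible, highly structured'' case is false as stated. Take the Petersen graph: cubic, girth $5$, $3$-connected, diameter $2$. Every $2$-packing has size $1$ (any two vertices are within distance $2$), and for a single vertex $v$ the set $N[v]$ is not a zero forcing set: by girth $5$ each neighbor of $v$ has two distinct white second neighbors, so no vertex of $N[v]$ has a unique white neighbor and no force ever occurs; indeed no single vertex power dominates the Petersen graph, and $\pd=2=\lfloor 10/4\rfloor$ is achieved only by a well-chosen pair, not by a packing-plus-propagation argument. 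So the terminal case of your induction fails exactly on the kind of graph it was meant to handle, and the reduction step it is meant to back up is also missing. As it stands, only the $N_r$ computation is proved; the bound $\pd(G)\le\lfloor n/4\rfloor$ must still be taken from \cite{DHLMR13} (where it is obtained by a substantially more involved argument, in fact for generalized power domination in $(k+2)$-regular graphs).
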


\begin{lem}\label{Nrlem} For $r\ge 2$, $\pd(\overline{N_r})=2$.
\end{lem}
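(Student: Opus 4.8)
The plan is to fix convenient notation for $N_r$ and prove the two inequalities $\pd(\overline{N_r})\le 2$ and $\pd(\overline{N_r})\ge 2$ separately. Write the $r$ copies of $K_4-e$ as $C_1,\dots,C_r$, where $C_i$ has vertices $a_i,b_i$ (the two of degree $2$ inside the copy) and $c_i,d_i$ (the two of degree $3$ inside the copy), and arrange the necklace so that its extra edges are exactly $\{b_i,a_{i+1}\}$ for $i=1,\dots,r$ (indices mod $r$). Then $N_{N_r}(c_i)=\{a_i,b_i,d_i\}$, $N_{N_r}(d_i)=\{a_i,b_i,c_i\}$, $N_{N_r}(a_i)=\{c_i,d_i,b_{i-1}\}$, and $N_{N_r}(b_i)=\{c_i,d_i,a_{i+1}\}$. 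In particular $N_{N_r}[c_i]=N_{N_r}[d_i]=\{a_i,b_i,c_i,d_i\}$, so in $\overline{N_r}$ the vertices $c_i$ and $d_i$ are nonadjacent twins, both having neighborhood $V(N_r)\setminus\{a_i,b_i,c_i,d_i\}$. Hence no vertex outside $\{c_i,d_i\}$ is adjacent in $\overline{N_r}$ to exactly one of $c_i,d_i$, so Remark~\ref{twins} applied with $W=\{c_i,d_i\}$ shows that every power dominating set of $\overline{N_r}$ must contain a vertex of $V(N_r)\setminus\{a_i,b_i\}$; in particular a single vertex of the form $a_i$ or $b_i$ cannot power dominate $\overline{N_r}$.

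For the upper bound, I claim $\{c_1,c_2\}$ is already a dominating set of $\overline{N_r}$. Indeed $N_{\overline{N_r}}[c_j]=V(N_r)\setminus N_{N_r}(c_j)=V(N_r)\setminus\{a_j,b_j,d_j\}$, and since $r\ge 2$ the sets $\{a_1,b_1,d_1\}$ and $\{a_2,b_2,d_2\}$ lie in distinct copies and hence are disjoint, so $N_{\overline{N_r}}[c_1]\cup N_{\overline{N_r}}[c_2]=V(N_r)$. Therefore $\pd(\overline{N_r})\le\gamma(\overline{N_r})\le 2$.

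For the lower bound, it remains to show that no single vertex $v$ power dominates $\overline{N_r}$. By the first paragraph $v$ is not of the form $a_i$ or $b_i$, so $v=c_i$ or $v=d_i$ for some $i$; using the symmetry exchanging $c_i$ and $d_i$, assume $v=c_i$. Since $N_r$ is $3$-regular, $PD(\{c_i\})$ initially equals $N_{\overline{N_r}}[c_i]=V(N_r)\setminus\{a_i,b_i,d_i\}$, so the three vertices $a_i,b_i,d_i$ of $C_i$ are white and all others are blue. I claim no forcing step is then possible, because every blue vertex has $0$, $2$, or $3$ white neighbors in $\overline{N_r}$. The vertex $c_i$ has none, as it is $N_r$-adjacent to all of $a_i,b_i,d_i$. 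A blue vertex $u$ lying outside $C_i$ is $\overline{N_r}$-adjacent to $d_i$ (because $N_{N_r}(d_i)\subseteq C_i$), and it is $\overline{N_r}$-adjacent to $a_i$ unless $u=b_{i-1}$ and to $b_i$ unless $u=a_{i+1}$ (these being the only neighbors of $a_i,b_i$ outside $C_i$, with $b_{i-1}\ne a_{i+1}$ since $a$ and $b$ label distinct vertices of a copy); hence $u$ has $3$, $2$, or $2$ white neighbors according as $u\notin\{b_{i-1},a_{i+1}\}$, $u=b_{i-1}$, or $u=a_{i+1}$. So the process halts with $a_i,b_i,d_i$ still white, and $\{c_i\}$ is not a power dominating set. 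Thus $\pd(\overline{N_r})\ge 2$, and together with the upper bound $\pd(\overline{N_r})=2$.

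The only delicate point is the case $v\in\{c_i,d_i\}$ in the lower bound: there the twin obstruction of Remark~\ref{twins} does not apply directly, since one of the two twins is already blue, so one must check by hand that the three white vertices of $C_i$ form a stalled configuration. This reduces to the structural facts that a vertex outside a copy of $K_4-e$ is joined to that copy by at most one necklace edge and is never adjacent to its two ``internal'' vertices $c_i,d_i$; the hypothesis $r\ge 2$ enters only to guarantee that two distinct copies are vertex-disjoint and that $b_{i-1}\ne a_{i+1}$.
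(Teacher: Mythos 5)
Your proposal is correct and follows essentially the same route as the paper: an explicit two-vertex dominating set drawn from two different copies of $K_4-e$ for the upper bound, and Remark~\ref{twins} with $W=\{c_i,d_i\}$ to rule out a single vertex of type $a_i$ or $b_i$. The only cosmetic difference is in the remaining case $v\in\{c_i,d_i\}$: the paper disposes of it by a second application of Remark~\ref{twins}, taking $W=\{a_i,b_i,d_i\}$ (no vertex outside this $W$ sees exactly one of its vertices in $\overline{N_r}$, and $c_i$ is neither in $W$ nor adjacent to it), which is exactly the stalled-configuration check you carried out by hand.
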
  
\bpf Any two vertices that are in different copies of $K_4-e$ and are not incident to the missing edges dominate $\overline{N_r}$, so $\pd(\overline{N_r})\le 2$.
To complete the proof, we show that no one vertex $v$ can power dominate $\overline{N_r}$.  
Denote the vertices of the $K_4-e$ that contains $v$ by $x,y,z,w$, where $e=\{x,y\}$. 
Apply Remark \ref{twins} to $W=\{z,w\}$ for $v=x$ and to $W=\{x,y,w\}$ for $v=z$  to conclude $\{v\}$ is not a power dominating set; the cases $v=y$ or $w$ are similar. %Apply Remark \ref{twins} to  $W=\{z,w\}$  to conclude $\{v\}$ is not a power dominating set  for $v=x$ or $y$. For  $v=z$,  every vertex dominated by $z$ is adjacent to at least two of $x,y,w$, so no forcing can take place.  Thus $\{z\}$ is not a power dominating set, and the case $v=x$ is similar. 
\epf

%%%%%%%%%%%%%%%%%%%%%%%%%%%%%%%%%%%%%%%%%

\section{Nordhaus-Gaddum sum bounds for power domination}\label{sNGsum}

In this section, we improve the tight Nordhaus-Gaddum sum upper bound of $n$ for all graphs (Corollary \ref{NGpowerdom}) to approximately $\frac n 3$  under one of  the assumptions that each component of $G$ and $\Gc$ has order at least 3 (Theorem \ref{NGsumthm} below), or that both $G$ and $\Gc$ are connected (Theorem \ref{NGsumthmcon} below), and to approximately $\frac n 4$ in some special cases.
The lower bound $2\le\pd(G)+\pd(\Gc)$ can be attained with both $G$ and $\Gc$  connected, specifically by the path $G=P_n$ (both $P_n$ and $\overline{P_n}$ are connected for $n\ge 4$).
But the upper bound for all graphs is attainable only by disconnecting $G$ or $\Gc$ with some very small components.  %We show that for graphs $G$ of order   $n\ne  13, 14, 16, 17, 20,$ such that each component of $G$ and $\Gc$  has order at least 3, $\pd(G) + \pd(\Gc) \leq  \lf \frac{n}{3}\rf+2$ (and $\pd(G) + \pd(\Gc) \leq  \lf \frac{n}{3}\rf+3$ without the restriction on $n$).

The next result   follows from    Corollary \ref{pdle2}  and 
Theorem  \ref{n/3boundcor}.

\begin{cor}\label{diam3+pdsum}
Let $G$ be a  graph of order $n$ such that  every component of $G$ and $\Gc$ has order at least $3$ and $(\diam(G)\geq 3$ or $\diam(\Gc)\geq 3$ or $\kappa(G)\le 3$  or $\kappa(\Gc)\le 3)$.  Then $\pd(G) + \pd(\Gc) \leq  \lf \frac{n}{3}\rf+2$.
\end{cor}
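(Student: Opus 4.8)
The plan is to obtain the bound as a direct combination of two tools already proved in Section~\ref{stools}: the dichotomy in Corollary~\ref{pdle2} and the $\lfloor n/3\rfloor$ bound in Theorem~\ref{n/3boundcor}. First I would note that the hypotheses here are symmetric in $G$ and $\Gc$: both the condition that every component of $G$ and of $\Gc$ has order at least $3$ and the disjunction ``$\diam(G)\geq 3$ or $\diam(\Gc)\geq 3$ or $\kappa(G)\le 3$ or $\kappa(\Gc)\le 3$'' are unchanged when $G$ and $\Gc$ are interchanged.

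Next I would invoke Corollary~\ref{pdle2}, specifically parts~\eqref{c2} and~\eqref{c3}, whose hypotheses are precisely the ones assumed here (every component of $G$ and $\Gc$ has order at least $3$, together with a diameter-at-least-$3$ condition or a vertex-connectivity-at-most-$3$ condition). This gives $\pd(G)\le 2$ or $\pd(\Gc)\le 2$. Using the symmetry just observed, I may assume without loss of generality that $\pd(\Gc)\le 2$.

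Finally, since every component of $G$ has order at least $3$, Theorem~\ref{n/3boundcor} yields $\pd(G)\le \lf \frac n 3\rf$. Adding the two inequalities gives $\pd(G)+\pd(\Gc)\le \lf \frac n 3\rf + 2$, which is the claim.

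I do not expect a genuine obstacle: the statement is an immediate consequence of the results established earlier in this section. The only points needing a moment's care are verifying that the stated hypotheses match exactly those of Corollary~\ref{pdle2}\eqref{c2} and~\eqref{c3}, and using the symmetry of the hypotheses to reduce to the case $\pd(\Gc)\le 2$ (so that Theorem~\ref{n/3boundcor} is applied to $G$, whose components all have order at least $3$).
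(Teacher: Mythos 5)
Your proof is correct and matches the paper's approach exactly: the paper derives this corollary directly from Corollary~\ref{pdle2} (parts \eqref{c2} and \eqref{c3}) together with Theorem~\ref{n/3boundcor}. (In fact the symmetry/WLOG step is not even needed, since both $G$ and $\Gc$ have all components of order at least $3$, so Theorem~\ref{n/3boundcor} applies to whichever graph is not the one with power domination number at most $2$.)
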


%As shown in \cite{AJ95}, the bound in Theorem \ref{NGdomJA95} can be achieved by the following construction:  Begin with a clique on $k$ vertices and construct $G$ by adding one leaf adjacent to each vertex in the clique, so the graph has  $n=2k$ vertices and $\gamma(G)=k=\frac n 2$.  

%\begin{thm}{\rm \cite{V11}, quoted in \cite{NGsurvey13}}\label{NGdomV11}  For any graph $G$ of order $n$ such that $\delta(G)\ge 6$ and $\delta(\Gc)\ge 6$, \[ \gamma(G) +\gamma(\Gc)\le \lf \frac {6n}{17}\rf+2.\] \end{thm}
\begin{thm}\label{NGsumthm}
Suppose $G$ is a graph of order $n$ such that every  component of $G$ and  $\Gc$ has order at least $3$.  Then for $n\ne  13, 14, 16, 17, 20$,  \[ \pd(G) + \pd(\Gc) \leq \lf\frac{n}{3}\rf + 2, \]
and this bound is attained for arbitrarily large $n$ by $G=rK_3$ $($where $r\ge 2)$.\\
%\hspace{1cm} 
For $n= 13, 14, 16, 17, 20$,
$ \pd(G) + \pd(\Gc) \leq \lf\frac{n}{3}\rf + 3. $
%and we do not have examples exceeding $ \lf\frac{n}{3}\rf + 2$.
\end{thm}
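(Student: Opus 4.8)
The plan is to reduce the theorem to a case analysis driven by the tools of Section~\ref{stools}, in particular Theorem~\ref{n/3boundcor}, Corollary~\ref{diam3+pdsum}, and the bounds on $\pd$ obtained from small connectivity or low diameter. First I would dispose of the generic case: if $\diam(G)\ge 3$, $\diam(\Gc)\ge 3$, $\kappa(G)\le 3$, or $\kappa(\Gc)\le 3$, then Corollary~\ref{diam3+pdsum} gives $\pd(G)+\pd(\Gc)\le\lf n/3\rf+2$ immediately, for \emph{all} $n$. So we may assume $\diam(G)=\diam(\Gc)=2$ (note $\diam\ne 1$ since all components have order at least $3$) and $\kappa(G),\kappa(\Gc)\ge 4$. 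In this remaining regime both $G$ and $\Gc$ are connected, so Theorem~\ref{n/3boundcor} applies to each: $\pd(G)\le\lf n/3\rf$ and $\pd(\Gc)\le\lf n/3\rf$.

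Next I would exploit the rigidity in the equality case of Theorem~\ref{n/3boundcor}. If $\pd(G)=n/3$, then (being connected) $G\in\maxpd$ or $G=K_{3,3}$; but a graph in $\maxpd$ has a vertex of degree $1$ (the pendant-like vertices $v',v''$ when they are not adjacent) or degree $2$, contradicting $\kappa(G)\ge 4$ (recall $\kappa\le\delta$), and $K_{3,3}$ has $\kappa=3$, again excluded. Hence in the remaining regime $\pd(G)\le\lf n/3\rf$ with equality impossible unless $3\nmid n$, so effectively $\pd(G)\le\lf (n-1)/3\rf$, and likewise for $\Gc$. More usefully, I would combine this with Proposition~\ref{foliatecomp}: whenever one of $\pd(G),\pd(\Gc)$ actually equals $\lf n/3\rf$, the other is at most $2$, and the sum bound $\lf n/3\rf+2$ holds. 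So the only way to beat $\lf n/3\rf+2$ is to have both $\pd(G)$ and $\pd(\Gc)$ strictly between $2$ and $\lf n/3\rf$, which forces $n$ small. Quantitatively, $\pd(G)+\pd(\Gc)\le\lf n/3\rf+3$ fails only if both parameters are at least $3$ and their sum exceeds $\lf n/3\rf+3$; feeding in $\pd(G)\le\lf (n-1)/3\rf$ (strict equality excluded) against $\pd(\Gc)\ge\lf n/3\rf+3-\pd(G)$ pins $n$ into a bounded range, which is where the exceptional values $13,14,16,17,20$ will come from.

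The main obstacle I anticipate is the finite case analysis for those small $n$: there one cannot appeal to asymptotics and must argue directly that $\pd(G)+\pd(\Gc)\le\lf n/3\rf+3$ (and only $+2$ for the other small $n$ not on the exceptional list). The natural tool is Theorem~\ref{domn4}: for $\diam(G)=2$ one has $\gamma(G)\le\lf n/4\rf+1$, hence $\pd(G)\le\lf n/4\rf+1$, and similarly for $\Gc$, giving $\pd(G)+\pd(\Gc)\le 2\lf n/4\rf+2$, which one checks is $\le\lf n/3\rf+2$ for all $n$ large enough and $\le\lf n/3\rf+3$ precisely outside a handful of small values; the truly exceptional $n\in\{13,14,16,17,20\}$ are exactly those where $2\lf n/4\rf+2$ overshoots $\lf n/3\rf+2$ by one and no sharper diameter-$2$ domination bound is available, so $+3$ is the best one can claim. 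Finally I would verify sharpness of the general bound: for $G=rK_3$ with $r\ge 2$ one has $\pd(G)=r$ and $\Gc=K_{r\times 3}$ (complete multipartite), for which $\pd(\Gc)=2$, so $\pd(G)+\pd(\Gc)=r+2=\lf n/3\rf+2$ with $n=3r$; this component-based example sidesteps the connectivity hypotheses of the earlier reduction precisely because $\diam(rK_3)=\infty$, landing it in the Corollary~\ref{diam3+pdsum} branch.
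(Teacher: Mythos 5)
Your reduction to the regime $\diam(G)=\diam(\Gc)=2$ and $\kappa(G),\kappa(\Gc)\ge 4$ matches the paper, and your tightness computation for $rK_3$ is correct, but the quantitative heart of your argument fails. Applying Theorem \ref{domn4} to both $G$ and $\Gc$ gives only $\pd(G)+\pd(\Gc)\le 2\lf\frac n4\rf+2$, and this is \emph{not} at most $\lf\frac n3\rf+2$ for large $n$: $2\lf\frac n4\rf\approx\frac n2>\lf\frac n3\rf$ for every $n$ (e.g.\ $n=40$ gives $22>15$), so the inequality you propose to ``check'' is false for all but tiny $n$. Similarly, your claim that having both parameters strictly between $2$ and $\lf\frac n3\rf$ ``forces $n$ small'' is unjustified as stated; nothing you invoke prevents both from being of order $n/4$. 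The missing ingredient is Corollary \ref{cor:NGpowerdom}, i.e.\ the product constraint $\pd(G)\cdot\pd(\Gc)\le n$. The paper sets $p=\pd(G)\le\pbar=\pd(\Gc)$ and splits on $p$: if $p\le 2$, Theorem \ref{n/3boundcor} bounds $\pbar$; if $p\ge 6$, then $p+\pbar\le\frac n{\pbar}+\frac np\le\frac n3$; and if $3\le p\le 5$, the bound $\pbar\le\lf\frac np\rf$ is combined with Theorem \ref{domn4} to get $p+\pbar\le\lf\frac n4\rf+4$ (for $p=3,4$) or $\lf\frac n5\rf+5$ (for $p=5$), which is $\le\lf\frac n3\rf+2$ exactly for $n\ge 21$ and $n=18,19$. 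That is where the exceptional orders $13,14,16,17,20$ actually come from, not from $2\lf\frac n4\rf+2$ overshooting by one (at $n=13$ it overshoots by two). Without the product inequality your argument cannot close the large-$n$ case at all.

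Two smaller problems. First, Proposition \ref{foliatecomp} has hypothesis $\pd(G)=\frac n3$, so it only applies when $3\mid n$; your use of it ``whenever one of $\pd(G),\pd(\Gc)$ equals $\lf\frac n3\rf$'' is not licensed for $3\nmid n$ (the paper uses it only for $n=12,15$, in tandem with $\pbar\le\lf\frac np\rf$, to exclude the pairs $(3,4)$ and $(3,5)$). Second, the orders $n=9,10,11$ are not covered by any of the tools you list; the paper disposes of them by computer verification \cite{sagedata}. So a correct proof along these lines needs the case split on $\min\{\pd(G),\pd(\Gc)\}$ driven by Corollary \ref{cor:NGpowerdom}, plus a separate finite analysis for $n\le 20$.
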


\bpf
Without loss of generality, we assume $\pd(G)\le \pd(\Gc)$, and let $p=\pd(G)$ and $\pbar=\pd(\Gc)$. 
If $p\le 2$, then $p+\pbar \leq \lf\frac{n}{3}\rf + 2$ follows from Theorem \ref{n/3boundcor}.    If $p\geq 6$, Corollary \ref{cor:NGpowerdom} gives 
$ p+\pbar \leq \frac{n}{\pbar}+\frac{n}p  \leq \frac{n}{3}.$ 
So we assume  $3\le p\le 5$.  Since $\diam(G),\diam(\Gc)\ne 1$, by Corollary \ref{diam3+pdsum} we may also assume $\diam(G)=\diam(\Gc)=2$ and $\kappa(G),\kappa(\Gc)\ge 4$.  The latter implies $n\ge 9$.
  Corollary \ref{cor:NGpowerdom} implies
$p+\pbar \leq p + \lf\frac{n}{p}\rf$. By Theorem \ref{domn4}, $p,\pbar \le\lf\frac n 4\rf+1$.  Thus we need to consider the following cases:\vspace{-3pt}
\bit
\item $p=3, 4$, in which case $p+\pbar\le  \lf\frac n 4\rf+4$.\vspace{-3pt}
\item $p=5$, in which case $p+\pbar  \le\lf\frac n 5\rf+5$.\vspace{-3pt}
\eit Algebra shows that  $\lf\frac n 4\rf+4 \le \lf \frac n 3 \rf +2$ and $\lf\frac n 5\rf+5 \le \lf \frac n 3 \rf +2$ for  $n\ge 21$ and $n=18, 19$.
  For $n=9, 10, 11$,  $p+\pbar\le  5=\lf\frac n 3\rf+2$  has been verified computationally \cite{sagedata}.

To complete the proof that $p+\pbar \leq  \frac{n}{3}+2$ for $n\ne  13, 14, 16, 17, 20$, we consider $n=12, 15$.  Since $p\le \pbar\le \frac n p$, the only possibilities are $n=12$ with $(p,\pbar)=(3,3), (3,4)$, or $n=15$ with $(p,\pbar)=(3,3), (3,4), (3,5)$. For $n=12$ with $ (p,\pbar)=(3,3)$, and $n=15$ with $(p,\pbar)=(3,3), (3,4)$,  $\pd(G)+ \pd(\Gc) \le \frac{n}{3}+2$.  In each of the remaining  cases, $n=12$ with $ (p,\pbar)=(3,4),$ or $n=15$ with $ (p,\pbar)= (3,5)$, observe that $\pbar=\frac n 3$ and $p=3$.  But this is prohibited by Proposition \ref{foliatecomp}.  

 If $G$ is a disjoint union of $r\ge 2$ copies of $K_3$, then  $\pd(G)+ \pd(\Gc) = \frac{n}{3}+2$, so the bound is tight for arbitrarily large $n$.
 
Finally, consider $n=  13, 14, 16, 17, 20$.  For $p=3$, $ \pd(G) + \pd(\Gc) \leq \lf\frac{n}{3}\rf + 3 $ is immediate from Theorem \ref{n/3boundcor}.  Since $p\le \pbar\le \frac n p$, the only remaining cases are $n=16$ or 17 with  $(p,\pbar)=(4,4)$, and $n=20$ with $(p,\pbar)=(4,4),\, (4,5)$. All of these satisfy
$ \pd(G) + \pd(\Gc) \leq \lf\frac{n}{3}\rf + 3 $.   \epf
 
We have no examples  contradicting  $\pd(G)+\pd(\Gc)\le  \lf\frac{n}{3}\rf+2$ for graphs $G$ of any order $n$ where  the order of each component of $G$ and $\Gc$ is at least 3.  We conjecture that these ``exceptional values"  $13, 14, 16, 17, 20$ of $n$ are not in fact exceptions:

\begin{conj}\label{sumupper3} If $G$ is graph of order $n$ such that   the order of each component of $G$ and $\Gc$ is at least $3$, then $\pd(G)+\pd(\Gc)\le  \lf\frac{n}{3}\rf+2$.
\end{conj}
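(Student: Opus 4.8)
\medskip

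\noindent\emph{Sketch of an approach toward Conjecture~\ref{sumupper3}.} By Theorem~\ref{NGsumthm} the inequality $\pd(G)+\pd(\Gc)\le\lf n/3\rf+2$ already holds for every admissible order except $n\in\{13,14,16,17,20\}$, so the plan is to re-run the proof of Theorem~\ref{NGsumthm} and to kill the finitely many pairs $(p,\pbar):=(\pd(G),\pd(\Gc))$, with $p\le\pbar$, that it leaves open at these five orders. The reductions already carried out there force $3\le p\le 5$, $\diam(G)=\diam(\Gc)=2$, $\kappa(G),\kappa(\Gc)\ge 4$, and both $G$ and $\Gc$ nonplanar and super-$\lambda$; intersecting the constraints $\pbar\le\lf n/p\rf$ (Corollary~\ref{cor:NGpowerdom}), $\pbar\le\lf n/3\rf$ (Theorem~\ref{n/3boundcor}), $p\le\pbar$, and $\pbar\le\lf n/4\rf+1$ (Theorem~\ref{domn4} with $\pd\le\gamma$), the pairs with $p+\pbar>\lf n/3\rf+2$ are exactly $(3,4)$ for $n=13,14$, then $(3,5)$ and $(4,4)$ for $n=16,17$, and $(3,6)$ and $(4,5)$ for $n=20$.

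I would attack these with two lemmas. \emph{Lemma A}: if $\diam(H)=2$, $\kappa(H)\ge 4$, $H$ is super-$\lambda$, and $13\le n\le 20$, then $\pd(H)\le\lf n/4\rf$ (i.e.\ the ``$+1$'' of Theorem~\ref{domn4} can be removed for power domination under these extra hypotheses). \emph{Lemma B}: under the hypotheses of Lemma~A, if moreover $\pd(H)=\lf n/4\rf$ for some $n\in\{16,17,20\}$, then $\pd(\overline H)\le 3$. Granting Lemma~A, the sharper bound $\pbar\le\lf n/4\rf$ makes $(3,4)$ at $n=13,14$, $(3,5)$ at $n=16,17$, and $(3,6)$ at $n=20$ simply impossible, since in each the required value of $\pbar$ exceeds $\lf n/4\rf$. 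The only survivors are $(4,4)$ at $n=16,17$ and $(4,5)$ at $n=20$; but in each of these one of $\pd(G)$, $\pd(\Gc)$ equals $\lf n/4\rf$, so Lemma~B forces the other to be at most $3$, contradicting its being $\ge 4$. (For $n=13,14$, and with more effort $n=16,17$, an alternative is exhaustive computation over graphs with $\diam=2$ and $\kappa\ge 4$, which would leave only $n=20$ and the pair $(4,5)$.)

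For Lemma~A I would build a power dominating set starting from a vertex $v$ of minimum degree: by the Wang--Li characterization used before Proposition~\ref{superlambda}, a super-$\lambda$ diameter-$2$ graph has no $K_{\delta(H)}$ in which every vertex has $H$-degree $\delta(H)$, so one can choose $v$ together with a few of its non-neighbors so that, after the initial domination step, the degree and diameter constraints let the forcing process sweep the rest of the graph; sharpening this bookkeeping on the narrow window $13\le n\le 20$ is what should produce $\lf n/4\rf$ rather than $\lf n/4\rf+1$. For Lemma~B I would show that attaining $\pd(H)=\lf n/4\rf$ at these orders forces $H$ to contain a twin set $W$ as in Remark~\ref{twins} or a pendant pair $v',v''$ as in Lemma~\ref{foliatecomplem}, which in $\overline H$ collapses to one or two power dominating vertices, exactly in the spirit of Proposition~\ref{foliatecomp} (the $n/3$ analogue).

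The main obstacle is Lemma~A. No bound of the form $\pd(H)\le\lf n/4\rf$ — indeed not even $\gamma(H)\le\lf n/4\rf$ — is currently known for diameter-$2$ graphs of small order; this is precisely why Theorem~\ref{domn4} carries the ``$+1$'' for $n\le 23$, so removing it (even under the extra hypotheses $\kappa(H)\ge 4$ and super-$\lambda$) requires a genuinely new structural argument, as does the companion equality characterization behind Lemma~B. There is also the possibility that Lemma~A fails for some diameter-$2$ super-$\lambda$ graph of order $16$, $17$, or $20$ while the conjectured sum bound still holds; in that event one would need to exploit the strong structural constraints satisfied by $G$ and $\Gc$ jointly, rather than bounding $\pd(G)$ and $\pd(\Gc)$ separately.
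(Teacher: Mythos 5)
First, note that the statement you are trying to prove is stated in the paper only as Conjecture~\ref{sumupper3}; the paper proves the bound $\lf n/3\rf+2$ only for $n\notin\{13,14,16,17,20\}$ (Theorem~\ref{NGsumthm}) and explicitly leaves these five orders open. So there is no proof in the paper to match yours against, and your proposal does not close the gap either: it is a conditional outline. Your reconstruction of the reductions in Theorem~\ref{NGsumthm} is accurate (the surviving pairs $(p,\pbar)$ with $p\le\pbar$ and $p+\pbar>\lf n/3\rf+2$ are indeed $(3,4)$ at $n=13,14$; $(3,5),(4,4)$ at $n=16,17$; $(3,6),(4,5)$ at $n=20$, with $\diam(G)=\diam(\Gc)=2$, $\kappa(G),\kappa(\Gc)\ge 4$, both nonplanar and super-$\lambda$), and the logic ``Lemma~A plus Lemma~B kills all survivors'' is internally consistent. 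But Lemmas~A and~B are exactly where all the difficulty sits, and you give no proof of either; as you yourself concede, Lemma~A would remove the ``$+1$'' of Theorem~\ref{domn4} in the range where that theorem genuinely carries it, and nothing in the paper (or in the cited literature) supplies such a strengthening even for domination, let alone a power-domination analogue. So the proposal has a genuine gap: the two key lemmas are unproven and are not routine consequences of the tools in Sections~\ref{stools}--\ref{sNGsum}.

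Two concrete warnings about the intended proofs of the lemmas. For Lemma~B, the mechanism you suggest (a pendant pair $v',v''$ as in Lemma~\ref{foliatecomplem}, ``in the spirit of Proposition~\ref{foliatecomp}'') cannot occur in the surviving cases: there $H$ and $\overline H$ both have $\kappa\ge 4$, hence $\delta(H)\ge 4$, while the configuration of Lemma~\ref{foliatecomplem} forces $\deg v'\le 2$; also Proposition~\ref{foliatecomp} is only available when $\pd=\frac n3$ with $n$ a multiple of $3$, which never happens for $n\in\{13,14,16,17,20\}$. Any Lemma~B would therefore have to run through sets $W$ as in Remark~\ref{twins} whose vertices all have degree at least $4$, and it is not clear such a set must exist in an extremal graph, nor that it would make $\pd(\overline H)\le 3$. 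Second, the computational fallback you mention is not realistic at these orders: the paper's verification \cite{sagedata} stops at $n=11$, and exhausting all graphs of order $13$--$17$ with $\diam=2$ and $\kappa\ge 4$ (together with the same conditions on the complement) is far beyond brute force without substantial additional theory. In short, your plan is a reasonable framing of what remains to be done, but as it stands it is a research program, not a proof, and you should present it as such.
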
 

Next we consider the case in which both $G$ and $\Gc$ are required to be connected.

\begin{thm}\label{NGsumthmcon}
Suppose $G$ is a graph of order $n$ such that both $G$ and  $\Gc$ are connected.  Then for $n\ne 12, 13, 14, 15, 16, 17, 18, 20, 21, 24$,  \[ \pd(G) + \pd(\Gc) \leq \lc\frac{n}{3}\rc + 1, \]
and this bound is attained for arbitrarily large $n\ge 6$ by $G\in\maxpd$.
\end{thm}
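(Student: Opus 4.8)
The plan is to mirror the structure of the proof of Theorem~\ref{NGsumthm}, but with the sharper target $\lc n/3\rc+1$ available because connectivity of both $G$ and $\Gc$ lets us apply Corollary~\ref{pdle2} more aggressively and, crucially, Proposition~\ref{foliatecomp} in its stronger form: when $\pd(G)=n/3$ with a component in $\maxpd$ of order at least $6$, we get $\pd(\Gc)=1$, not just $\le 2$. As before, set $p=\pd(G)\le\pbar=\pd(\Gc)$, assume $n$ is not one of the listed exceptions, and split on the size of $p$. If $p=1$, then $\pd(G)+\pd(\Gc)\le 1+\lf n/3\rf\le\lc n/3\rc+1$ by Theorem~\ref{n/3boundcor}, unless $\pd(\Gc)=n/3$ exactly and $3\mid n$; in that borderline case I would invoke Theorem~\ref{n/3boundcor}'s classification to push toward the construction $G\in\maxpd$ and check the bound holds there. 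If $p\ge 5$, Corollary~\ref{cor:NGpowerdom} gives $p+\pbar\le n/p+n/\pbar\le 2n/5<n/3+1$ only for large $n$, so actually the cleaner cutoff is $p\ge 6$ giving $p+\pbar\le n/3$; for $p=5$ I would use $p+\pbar\le 5+\lf n/5\rf$ and compare with $\lc n/3\rc+1$ via algebra, valid for $n$ past a threshold.

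The heart of the argument is $2\le p\le 5$ (the $p=1$ subcase having been handled). Since diameters cannot be $1$ (both graphs connected on $\ge 3$ vertices — actually $\ge 6$, since both connected forces $n\ge 4$ and then the exceptional-value list starts at $12$, so the interesting range is large), I would use Corollary~\ref{pdle2}\eqref{c2},\eqref{c3} to reduce to $\diam(G)=\diam(\Gc)=2$ and $\kappa(G),\kappa(\Gc)\ge 4$; the latter forces $n\ge 9$. Then Theorem~\ref{domn4} bounds $p,\pbar\le\lf n/4\rf+1$. The case $p=2$ now gives $p+\pbar\le 2+\lf n/4\rf+1=\lf n/4\rf+3\le\lc n/3\rc+1$ for $n$ large (roughly $n\ge 12$). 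For $p\in\{3,4\}$ we get $p+\pbar\le\lf n/4\rf+5$, and for $p=5$, $p+\pbar\le\lf n/5\rf+6$; algebra shows both are $\le\lc n/3\rc+1$ for $n$ past some bound, and the finitely many $n$ below that bound either fall among the listed exceptions or are cleared by the refined $\pd(\Gc)\le\frac np$ constraint combined with Proposition~\ref{foliatecomp}. Specifically, whenever the only way to reach the bound would require $\pbar=n/3$ with $p$ small, Proposition~\ref{foliatecomp} forces $\pd(\Gc)\le 2$ (or $=1$), collapsing that case.

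For tightness I would exhibit $G\in\maxpd$: start from a connected base graph $H$ on $k$ vertices and attach a pendant path of length $2$ (a ``cherry'') at each vertex, so $n=3k$ and $\pd(G)=k=n/3$ by Theorem~\ref{n/3boundcor} (these graphs achieve the $\maxpd$ extremal value), while $\pd(\Gc)=1$ by Lemma~\ref{foliatecomplem} and Proposition~\ref{foliatecomp}. One must check $\Gc$ is connected — it is, since in $\Gc$ the three vertices of each cherry are mutually ``spread out'' and a single pendant-leaf vertex dominates almost everything — and that $H$ can be chosen so both $G,\Gc$ are connected for every large $k$; taking $H=P_k$ works. Then $\pd(G)+\pd(\Gc)=n/3+1=\lc n/3\rc+1$ when $3\mid n$, giving the claimed infinite family.

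I expect the main obstacle to be the bookkeeping at the boundary: pinning down exactly which small $n$ with $p\in\{2,3,4,5\}$ survive the algebra and require either a computational check (as in \cite{sagedata}) or a case-specific structural argument via Proposition~\ref{foliatecomp}, and verifying that the excluded list $\{12,13,14,15,16,17,18,20,21,24\}$ is precisely the set of genuinely hard residual cases rather than an artifact of slack in the bounds. A secondary subtlety is the $p=1$ branch: showing that $\pd(G)=1$ together with $\pd(\Gc)$ large enough to threaten the bound forces $G$ into the $\maxpd\cup\{K_{3,3}\}$ classification, and that $K_{3,3}$ (whose complement is $2K_3$, disconnected) is excluded by the connectivity hypothesis — so only the genuine $\maxpd$ extremal graphs remain, and there $\pd(\Gc)=1$ anyway by Proposition~\ref{foliatecomp}, comfortably within the bound.
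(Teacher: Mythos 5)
Your overall architecture matches the paper's (the paper in fact dispatches every $n$ not divisible by $3$ in one line, since then $\lc\frac n3\rc+1=\lf\frac n3\rf+2$ and Theorem~\ref{NGsumthm} applies, and only reworks the multiples of $3$), but your treatment of the decisive $p=2$ case has a genuine gap. When $p=\pd(G)=2$ you cannot use Corollary~\ref{pdle2} to force $\diam(G)=\diam(\Gc)=2$ and $\kappa\ge 4$: its conclusion ``$\pd(G)\le 2$ or $\pd(\Gc)\le 2$'' is already satisfied, so the contrapositive yields nothing, and hence Theorem~\ref{domn4} need not apply to $\Gc$; the bound $\pbar\le\lf n/4\rf+1$ you use for $p=2$ is unjustified. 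For $p\le 2$ all you really have is $\pbar\le\lf n/3\rf$, which misses the target $\lc\frac n3\rc+1$ exactly when $3\mid n$ and $\pbar=\frac n3$. This is the one place where connectedness must do work: if $\pbar=\frac n3$, Theorem~\ref{n/3boundcor} puts the connected graph $\Gc$ in $\maxpd\cup\{K_{3,3}\}$; $\Gc=K_{3,3}$ is impossible because $G=\overline{K_{3,3}}=2K_3$ is disconnected; so $\Gc\in\maxpd$ has order at least $6$ and the strong form of Proposition~\ref{foliatecomp} gives $\pd(G)=1$, contradicting $p=2$ (the same exclusion is what justifies assuming $\pbar\le\frac n3-1$ in the range $3\le p\le 5$). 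Your sentence that Proposition~\ref{foliatecomp} ``forces $\pd(\Gc)\le 2$'' points the proposition at the wrong graph and does not produce this contradiction; the $K_{3,3}$/connectivity observation appears only in your $p=1$ branch, where it is not needed.

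Two smaller repairs. In the middle range your estimates $\lf n/4\rf+5$ (for $p\in\{3,4\}$) and $\lf n/5\rf+6$ (for $p=5$) are weaker than what is needed: the paper gets $\lf n/4\rf+4$ and $\lf n/5\rf+5$ by combining Theorem~\ref{domn4} for $p=3$ with Corollary~\ref{cor:NGpowerdom} (i.e.\ $\pbar\le\lf n/p\rf$) for $p=4,5$; with your slack the algebra only takes over near $n\ge 39$ and $48$, which would enlarge the exceptional list beyond the claimed $\{12,\dots,24\}$, so the sharper combination cannot be deferred. Finally, your extremal family is misdescribed: attaching a pendant path of length $2$ to each vertex of $P_k$ gives, e.g., $P_6$ for $k=2$, whose power domination number is $1$, not $n/3$; the $\maxpd$ construction requires both new vertices $v',v''$ to be adjacent to $v$. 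With that construction one indeed has $\pd(G)=\frac n3$ (lower bound via Remark~\ref{twins} applied to each pair $\{v',v''\}$, upper bound since $V(H)$ dominates), $\pd(\Gc)=1$ by Lemma~\ref{foliatecomplem} or Proposition~\ref{GBarBd1}, and $\Gc$ connected for $n\ge 6$, which is the paper's tightness argument.
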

 \bpf For $n$ not a multiple of 3, $\lc\frac{n}{3}\rc + 1=\lf\frac{n}{3}\rf + 2$, and the result follows from Theorem \ref{NGsumthm}.   So assume  $n$  is a multiple of 3.  We proceed as in the proof of Theorem \ref{NGsumthm}, with the same  notational conventions $p:=\pd(G)\le \pbar:=\pd(\Gc)$, and again the bound is established for $p\ge 6$.  If $p\le 2$, then $p+\pbar \leq \frac{n}{3} + 2$ follows from Theorem \ref{n/3boundcor}, and the only way to attain $p+\pbar = \frac{n}{3} + 2$ is to have $p=2$ and    $\pbar = \frac{n}{3}$.  Since $\overline{K_{3,3}}$ is not connected, Theorem \ref{n/3boundcor} and 
 Proposition \ref{foliatecomp}  prohibit $p\ge 2$ and    $\pbar = \frac{n}{3}$.
So we assume  $p\le 5$ and  $3\le p\le \pbar\le \frac n 3 -1$; the latter requires $n\ge 12$.   Algebra shows that  $\lf\frac n 4\rf+4 \le  \frac n 3  +1$ and $\lf\frac n 5\rf+5 \le \frac n 3   +1$ for  $n\ge 27$.

  There are  graphs $G\in\maxpd$ of arbitrarily large order $n$, $\Gc$ is connected for $n\ge 6$, and these graphs attain the bound. 
\epf

%\begin{rem}\label{NGsumcon}
The tight upper bound in Theorem \ref{NGsumthmcon} for $\pd(G)+\pd(\Gc)$  with both $G$ and $\Gc$ connected  was obtained by switching from floor to ceiling.  This raises a question about the bound with floor, which has implications for products (see Section \ref{sNGprod}).

\begin{quest}\label{NGsumconQ} Do there exist graphs  $G$  of arbitrarily large order $n$ with both $G$ and $\Gc$ connected such that $\pd(G) + \pd(\Gc) =\lf \frac{n}{3}\rf+2$?
\end{quest}

 The next two examples, found via the computer program {\em Sage}, show  that there are pairs of connected graphs  $G$ and $\Gc$ of orders $n=8$ and $11$ such that $\pd(G)+\pd(\Gc)= \lf\frac{n}{3}\rf+2$.  %Although we do not have larger examples with $\pd(G)+\pd(\Gc)= \lf\frac{n}{3}\rf+2$ for both connected, we believe they exist.
%\end{rem}

\begin{ex}\label{cx:n/3+1} Let $G$  be the graph shown with  its complement  in Figure \ref{fig:n/3+1}; observe that both are connected.  It is easy to see that no one vertex power dominates either $G$ or $\Gc$ and also easy to find a power dominating set of two vertices for each. % (e.g., $\{2,5\}$ is a power dominating set for $G$ and $\{3,4\}$ is a power dominating set for $\Gc$). 
Thus  
\[\pd(G)+\pd(\Gc)=2+2=4=\lf\frac 8 3\rf+2.\]

\begin{figure}[!ht]
\begin{center}
\scalebox{.5}{\includegraphics{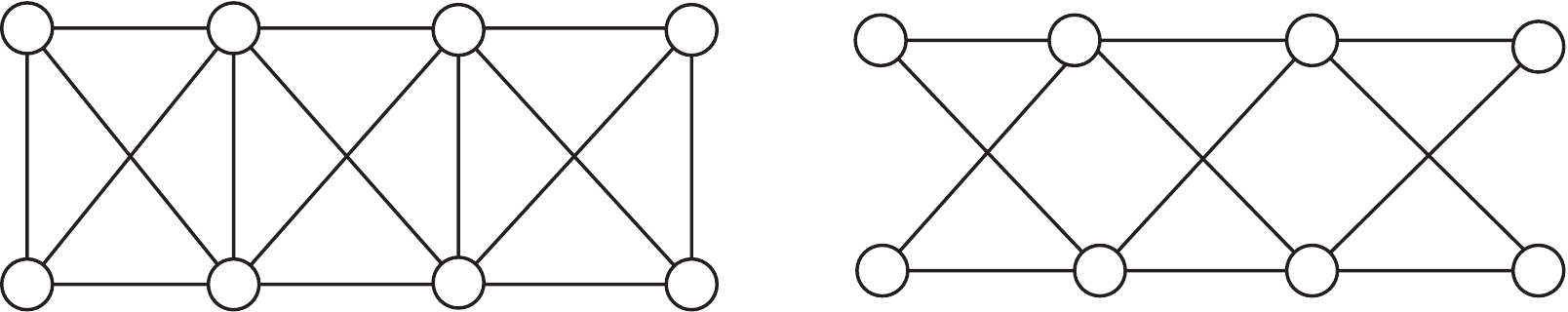}}\vspace{-5pt}
\caption{A connected graph $G$ of order 8 and its connected complement $\Gc$ such that $\pd(G)+\pd(\Gc)=\lf\frac{n}{3}\rf+2.$}\label{fig:n/3+1}\vspace{-8pt}
 \end{center}
\end{figure}
\end{ex}

\begin{ex}\label{newcx} Let $G$ be the graph  shown in Figure \ref{fig:n/3+2}.  It is easy to see that $\Gc$ is also  connected. %Observe that $G$ has a cut-vertex $v$ and $G-v$ is two disjoint copies of $K_{3,2}$. 

\begin{figure}[!h]
\begin{center}
\scalebox{.7}{\includegraphics{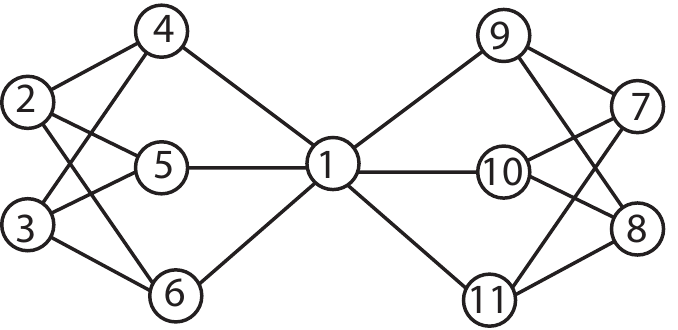}}\vspace{-5pt}
\caption{A connected graph $G$ of order 11 such that $\Gc$ is also connected and  $\pd(G)+\pd(\Gc)=\lf\frac{n}{3}\rf+2.$}\label{fig:n/3+2}\vspace{-15pt}
 \end{center}
\end{figure}

First we show that no set of two vertices is a power dominating set for $G$.  Since $\{1,2,7\}$ is a power dominating set for $G$, this will imply $\pd(G)=3=\lf\frac {11}3\rf$.  By Remark \ref{twins} applied to the sets $W_1=\{2,3\}$ and $W_2=\{7,8\}$, any power dominating set $S$ of $G$ must contain vertices $u_1\in\{ 2, 3, 4, 5, 6\}$ and analogously,  $u_2\in\{ 7, 8, 9, 10,  11\}$.  If $u_1\in\{2, 3\}$ and  $u_2\in\{7,8\}$, then vertex $1$ cannot be forced. If $u_1\in\{4,5,6\}$, then the two remaining vertices in $\{4,5,6\}$ cannot be forced; the case in which  $u_2\in\{9,10,11\}$ is symmetric. 

Next we show that no one vertex is a power dominating set for $\Gc$.  Since $\{2,7\}$ is a power dominating set for $\Gc$, this will imply $\pd(\Gc)=2$ and  $\pd(G)+\pd(\Gc)=\lf\frac {11}3\rf+2$. For each possible vertex $v\in \{1,2,3,4,5,6\}$, we apply Remark \ref{twins} with $W$ as shown: For $v\in\{1,2,3\}$, use $W=\{4,5,6\}$. For $v\in \{4,5,6\}$, use $W=\{2,3\}$. The case $v\in \{7,8,9,10,11\}$ is symmetric.
\end{ex}

The next two theorems for domination number provide an interesting comparison.

%We adapt the method of proof of the  domination number Nordhaus-Gaddum sum upper bound  stated in the next result to establish a Nordhaus-Gaddum upper bound for power domination number with the assumption that all components of $G$ and $\Gc$ have order at least three.

\begin{thm}{\rm \cite{AJ95}}\label{NGdomJA95}
For any graph $G$ of order $n$ such that $\delta(G)\ge 1$ and $\delta(\Gc)\ge 1$,
\[ \gamma(G) +\gamma(\Gc)\le \lf \frac {n}{2}\rf+2,\]
and this bound is attained for arbitrarily large $n$.  \end{thm}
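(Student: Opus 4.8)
The plan is to break into cases according to the diameters of $G$ and $\Gc$, dispatching the large-diameter case with Theorem \ref{diam3+} and the diameter-$2$ case with Theorem \ref{domn4}, together with the classical fact of Ore (see, e.g., \cite{HHS98}) that a graph $H$ with no isolated vertices satisfies $\gamma(H)\le\lfloor |V(H)|/2\rfloor$.

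First I would suppose $\diam(G)\ge 3$ or $\diam(\Gc)\ge 3$; this includes the cases where $G$ or $\Gc$ is disconnected, since then the relevant diameter is $\infty$. By Theorem \ref{diam3+}, one of $\gamma(G),\gamma(\Gc)$ is at most $2$. If $\gamma(\Gc)\le 2$, then since $\delta(G)\ge 1$ Ore's bound gives $\gamma(G)\le\lfloor n/2\rfloor$, so $\gamma(G)+\gamma(\Gc)\le\lfloor n/2\rfloor+2$; the case $\gamma(G)\le 2$ is symmetric, using $\delta(\Gc)\ge 1$.

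Next I would suppose $\diam(G)\le 2$ and $\diam(\Gc)\le 2$. The hypotheses $\delta(G)\ge 1$ and $\delta(\Gc)\ge 1$ force $n\ge 2$ and rule out $G=K_n$, so $\diam(G)\ne 1$ (otherwise $\Gc$ would be edgeless); likewise $\diam(\Gc)\ne 1$. Hence $\diam(G)=\diam(\Gc)=2$, so Theorem \ref{domn4} applies to both graphs and yields $\gamma(G)\le\lfloor n/4\rfloor+1$ and $\gamma(\Gc)\le\lfloor n/4\rfloor+1$. Since $2\lfloor n/4\rfloor\le\lfloor n/2\rfloor$ for every $n$ (check each residue of $n$ modulo $4$), we conclude $\gamma(G)+\gamma(\Gc)\le 2\lfloor n/4\rfloor+2\le\lfloor n/2\rfloor+2$.

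For tightness I would take $G=\tfrac n2K_2$ when $n$ is even, so $\gamma(G)=\tfrac n2=\lfloor n/2\rfloor$ while $\Gc$, the complement of a perfect matching, has $\gamma(\Gc)=2$; and $G=\tfrac{n-3}2K_2\cup K_3$ when $n$ is odd, so $\gamma(G)=\tfrac{n-1}2=\lfloor n/2\rfloor$ and, since $G$ is disconnected and $\Gc\ne K_n$, $\gamma(\Gc)=2$ by Theorem \ref{diam3+}. In both cases $\delta(G)=1$ and $\delta(\Gc)\ge n-3\ge 1$ for all $n\ge 4$ of the relevant parity, and $\gamma(G)+\gamma(\Gc)=\lfloor n/2\rfloor+2$. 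The only substantive point in this approach is the diameter-$2$ case, whose entire difficulty is absorbed by Theorem \ref{domn4}; what then needs care is merely the exhaustiveness of the case split (in particular excluding $\diam=1$) and the elementary inequality $2\lfloor n/4\rfloor\le\lfloor n/2\rfloor$. A proof avoiding Theorem \ref{domn4} would have to handle the diameter-$2$ case directly — e.g.\ through the neighborhood structure of a maximum-degree vertex of $G$, assembling dominating sets for $G$ and for $\Gc$ from the two ``sides'' of that vertex — and that is where the real work would lie.
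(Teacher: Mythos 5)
Your argument is correct, but note that the paper does not prove Theorem \ref{NGdomJA95} at all --- it is quoted from \cite{AJ95} --- so there is no internal proof to match; what you have given is an independent derivation from other results stated in the paper. The case split is exhaustive (the hypotheses force $n\ge 2$ and exclude $\diam(G)=1$ and $\diam(\Gc)=1$, and disconnected graphs fall under the $\diam\ge 3$ case), the $\diam\ge 3$ case correctly combines Theorem \ref{diam3+} with Ore's bound $\gamma(H)\le\lf |V(H)|/2\rf$ for $\delta(H)\ge 1$, the diameter-$2$ case correctly uses $\gamma\le\lf n/4\rf+1$ from Theorem \ref{domn4} for both $G$ and $\Gc$ together with $2\lf n/4\rf\le\lf n/2\rf$, and your extremal families $\tfrac n2K_2$ and $\tfrac{n-3}2K_2\cup K_3$ do attain $\lf n/2\rf+2$ with both minimum-degree hypotheses satisfied. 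The one caveat worth recording is methodological rather than logical: you absorb the only substantive case into Theorem \ref{domn4}, a considerably stronger (and much later, 2006/2014) theorem than the 1995 statement being proved, so your route is a quick corollary of heavy machinery rather than a reconstruction of the elementary argument of \cite{AJ95}; a self-contained proof would have to handle the $\diam(G)=\diam(\Gc)=2$ case directly (e.g.\ building dominating sets from the neighborhood and non-neighborhood of a suitable vertex), exactly as you acknowledge. Since there is no circular dependence --- Theorem \ref{domn4} nowhere relies on the Nordhaus--Gaddum bound --- the proof stands as valid.
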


\begin{thm}\label{n3dom}
{\rm \cite{HV06}} Suppose $G$ is a graph of order $n$ such that $\delta(G)\ge 7$ and $\delta(\Gc)\ge 7$.  Then
 \[ \gamma(G) + \gamma(\Gc) \leq \lf\frac{n}{3}\rf + 2. \]
\end{thm}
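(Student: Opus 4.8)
The plan is to split according to the diameters of $G$ and $\Gc$. Since $\delta(G),\delta(\Gc)\ge 7$, every vertex $v$ satisfies $7\le\deg_G(v)\le n-8$, so $n\ge 15$, neither graph is complete, and hence $\diam(G),\diam(\Gc)\ge 2$ (with the convention $\diam=\infty$ when the graph is disconnected). If $\diam(G)\ge 3$ and $\diam(\Gc)\ge 3$, then applying Theorem \ref{diam3+} to $G$ gives $\gamma(\Gc)\le 2$, and applying it to $\Gc$ (it is valid for disconnected graphs as well) gives $\gamma(G)\le 2$, so $\gamma(G)+\gamma(\Gc)\le 4\le\lf\frac{n}{3}\rf+2$ since $n\ge 15$. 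If exactly one diameter is at least $3$, say $\diam(G)\ge 3$ and $\diam(\Gc)=2$, then Theorem \ref{diam3+} still gives $\gamma(\Gc)\le 2$, so it suffices to show $\gamma(G)\le\lf\frac{n}{3}\rf$, which I would deduce from a minimum-degree domination bound applied with $\delta(G)\ge 7$ (see the obstacle below).

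For the remaining, and main, case $\diam(G)=\diam(\Gc)=2$, I would argue locally. Fix a vertex $v$ (e.g.\ one of minimum degree), put $A=N_G(v)$ and $B=V\setminus N_G[v]$ (so $|A|+|B|=n-1$), and split the complete bipartite graph between $A$ and $B$ into its $G$-edges $H_1$ and its $\Gc$-edges $H_2$. The hypothesis $\diam(G)=2$ forces that every $b\in B$ has a $G$-neighbor in $A$ and that any two vertices of $B$ are adjacent or have a common $G$-neighbor; dually, $\diam(\Gc)=2$ forces the analogous statements for $A$ inside $\Gc$. The key point is that whenever some $b_0\in B$ has a \emph{small} neighborhood $N_{H_1}(b_0)$, the pairwise-intersection property forces $N_{H_1}(b_0)$, together with $b_0$ and a few vertices absorbing the $G$-edges lying inside $B$, to dominate all of $B$ in $G$; thus $\gamma(G)$ is controlled by $\min_{b\in B}|N_{H_1}(b)|$, and dually $\gamma(\Gc)$ is controlled by $\min_{a\in A}|N_{H_2}(a)|$. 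Since $|N_{H_1}(b)|+|N_{H_2}(b)|=|A|$ and $|N_{H_1}(a)|+|N_{H_2}(a)|=|B|$, one balances these two estimates against $|A|+|B|=n-1$, using $\delta(G),\delta(\Gc)\ge 7$ to bound the degrees that enter the count, to conclude $\gamma(G)+\gamma(\Gc)\le\lf\frac{n}{3}\rf+2$; the diameter-$2$ bound $\gamma\le\lf\frac{n}{4}\rf+1$ of Theorem \ref{domn4} and the bound $\gamma\le\kappa$ of Theorem \ref{kappa} would be used to settle the leftover configurations and the finitely many small orders $n$ for which the floor estimates are tight.

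I expect the main obstacle to be exactly this diameter-$2$ analysis: making the local count at $v$ robust to edges lying inside $A$ or inside $B$ (those edges supply extra domination power that must be exploited, not merely tolerated), and choosing $v$ so that the two resulting estimates are simultaneously small. A secondary difficulty is the standalone claim $\gamma(G)\le\lf\frac{n}{3}\rf$ under $\delta(G)\ge 7$ that is needed in the mixed-diameter case: the classical bound $\gamma(G)\le n\cdot\frac{1+\ln(\delta+1)}{\delta+1}$ yields $\gamma(G)\le\frac{n}{3}$ only for $\delta\ge 9$, so for $\delta\in\{7,8\}$ one must invoke a sharper degree bound or exploit the Nordhaus--Gaddum context (here $\diam(\Gc)=2$, which constrains $G$). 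If necessary, the finitely many small orders $n$ can be dispatched by a finite computation, as in the proof of Theorem \ref{NGsumthm}.
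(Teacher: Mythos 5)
First, a point of reference: the paper does not prove this statement at all --- Theorem \ref{n3dom} is quoted from Hellwig and Volkmann \cite{HV06} and used as a black box --- so there is no in-paper proof to measure your argument against; it has to stand on its own as a proof of the cited result. As such, it does not yet stand. Your easy case ($\diam(G)\ge 3$ and $\diam(\Gc)\ge 3$, giving $\gamma(G)+\gamma(\Gc)\le 4$) is fine, but the two cases that carry all the content are left as sketches. In the mixed case ($\diam(G)\ge 3$, $\diam(\Gc)=2$) you reduce to the claim $\gamma(G)\le\lf\frac{n}{3}\rf$ from $\delta(G)\ge 7$, and you yourself note the probabilistic bound only gives this for $\delta\ge 9$. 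This is not a small technicality: no purely minimum-degree bound of the form $\gamma\le n/3$ is available at $\delta=7$ (the best known bounds of this type at $\delta=7$, e.g.\ Caro--Roditty, are about $0.35n$, and the conjecturally sharp value $\frac{7n}{20}$ also exceeds $\frac n3$), so ``invoke a sharper degree bound'' cannot close the gap; one must genuinely use the complement hypotheses ($\Delta(G)\le n-8$, $\diam(\Gc)=2$, $\diam(G)\ge 3$), and you do not indicate how.

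The main case $\diam(G)=\diam(\Gc)=2$ is also not established. Your local decomposition at $v$ with $A=N_G(v)$, $B=V\setminus N_G[v]$ rests on the assertion that if some $b_0\in B$ has few $G$-neighbors in $A$, then $N_{H_1}(b_0)\cup\{b_0\}$ plus ``a few vertices absorbing the $G$-edges inside $B$'' dominates $B$; but two vertices of $B$ at distance $2$ in $G$ may have all their common neighbors inside $B$, so nothing forces $B$ to be dominated by $b_0$ and its $A$-neighbors, and ``a few'' is never quantified --- this is exactly where the bound $\frac n3$ would have to come from. The fallbacks you name do not rescue it: Theorem \ref{domn4} gives only $\gamma(G)+\gamma(\Gc)\le 2\lf\frac n4\rf+2\approx\frac n2+2$ when both diameters are $2$, and Theorem \ref{kappa} gives $\gamma\le\kappa\le\delta\le n-8$, neither of which approaches $\lf\frac n3\rf+2$. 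So the proposal is an outline whose two essential steps (the mixed-diameter case and the double-diameter-$2$ case) remain unproven; to verify the statement one should consult the actual argument in \cite{HV06} rather than the balancing heuristic described here.
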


From Theorem \ref{n3dom} we see that the same sum upper bound we obtained for power domination number (with the weaker hypothesis that every component has order at least 3) is obtained for domination number when we make the stronger assumption that the minimum degrees of both $G$ and $\Gc$ are  at least 7.
Theorem \ref{NGdomJA95} is a more direct parallel to Theorem \ref{NGsumthm} but with a higher bound.    Theorem \ref{NGdomJA95} has a weaker hypothesis, which is equivalent to ``every component of  $G$ and $\Gc$ has order at least 2."   The next example shows that if Theorem \ref{NGdomJA95} is restated to require  
 both $G$ and $\Gc$ to be connected, the bound remains tight.  This provides a direct comparison with Theorem  \ref{NGsumthmcon} and shows that  for graphs $G$ with both $G$ and $\Gc$  connected, the upper bound  for the domination sum is substantially higher than the upper bound for the power domination sum.

\begin{ex}\label{ex:comb}  Let $G_k$ denote the $k$th comb, constructed by adding a leaf to every vertex of a path $P_k$ ($G_9$ is shown in Figure \ref{fig:comb}); the order of $G_k$ is $2k$.  Then every dominating set $S$ must have at least $k$ elements, because for each of the $k$ leaves, either the leaf or its neighbor must be in $S$. Since two vertices are needed to dominate $\overline{G_k}$,  $\gamma(G_k)+\gamma(\overline{G_k})=k+2=\frac{2k} 2 +2$.  The results for power domination are very different.  For $k=3s$,  one third of the vertices in $P_k$ can power dominate $G_k$, and one vertex can power dominate $\overline{G_k}$, so 
$\pd(G_k)+\pd(\overline{G_k})=s+1=\frac{2k} 6 +1$.

\begin{figure}[!ht]
\begin{center}
\scalebox{.7}{\includegraphics{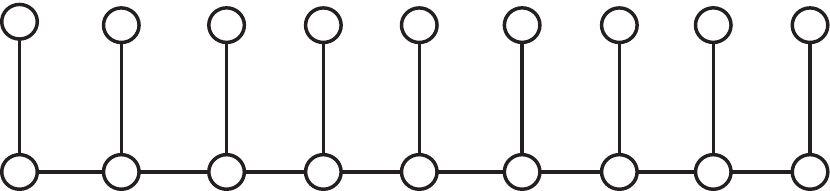}}\vspace{-5pt}
\caption{The comb $G_9$ with the vertices of a minimum power dominating set colored. %  Observe that $\gamma(G_9)=9$ and $\pd(G_9)=3$
}\label{fig:comb}\vspace{-8pt}
 \end{center}
\end{figure}
\end{ex}

We can also % use  the bound in Theorem \ref{NGsumthm} to 
improve the bound in Corollary \ref{NGpowerdom} when $G$ has some components of order less than 3 and $G$ has at least one edge.

\begin{thm}\label{smallcomp}
Let $G$ be a graph of order $n$ that has $n_1$ isolated vertices and $n_2$ copies of $K_2$ as components such that   $n_1$ and $n_2$ are not both zero. Then\vspace{-3pt}
 \[ \pd(G) + \pd(\Gc) \leq 1 + \frac{n}{3} + \frac{2n_1}{3} + \frac{n_2}{3}. \vspace{-3pt}\]
\end{thm}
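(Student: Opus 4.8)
The plan is to decompose $G$ into its ``small'' components (the $n_1$ isolated vertices and the $n_2$ copies of $K_2$) together with a remaining subgraph $G'$ consisting of all components of order at least $3$, bound $\pd$ on each piece, and then bound $\pd(\Gc)$ by $1$ using the fact that any small component of $G$ produces a (near-)universal vertex in $\Gc$.

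First I would set $n_3 := n - n_1 - 2n_2$, the order of $G'$, and observe that every component of $G'$ is connected of order at least $3$, since any connected graph on one or two vertices is $K_1$ or $K_2$. Hence Theorem~\ref{n/3boundcor} gives $\pd(G') \le \lf \frac{n_3}{3} \rf \le \frac{n_3}{3}$. Since power domination acts independently on connected components and $\pd(K_1) = \pd(K_2) = 1$, this yields
\[ \pd(G) \;=\; n_1 + n_2 + \pd(G') \;\le\; n_1 + n_2 + \frac{n_3}{3}. \]

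Next I would prove $\pd(\Gc) = 1$, assuming $n \ge 3$ (which excludes only $G = K_2$, where the displayed inequality in fact fails since $1+2 > 2$). If $n_1 \ge 1$, an isolated vertex $v$ of $G$ satisfies $N_{\Gc}[v] = V(G)$, so $\{v\}$ dominates $\Gc$. If $n_1 = 0$, then $n_2 \ge 1$, and for a $K_2$-component $\{a,b\}$ of $G$ we have $N_{\Gc}[a] = V(G) \setminus \{b\}$; since $n \ge 3$ there is a vertex $u \notin \{a,b\}$, which is adjacent to $b$ in $\Gc$ and satisfies $N_{\Gc}(u) \setminus N_{\Gc}[a] = \{b\}$, so $u$ forces $b$, and $\{a\}$ power dominates $\Gc$.

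Finally, combining the two bounds and substituting $n_3 = n - n_1 - 2n_2$,
\[ \pd(G) + \pd(\Gc) \;\le\; n_1 + n_2 + \frac{n_3}{3} + 1 \;=\; 1 + \frac{n + 2n_1 + n_2}{3} \;=\; 1 + \frac{n}{3} + \frac{2n_1}{3} + \frac{n_2}{3}, \]
which is the asserted bound; it is sharp, for instance for $G = r K_3 \cup K_1$, where $\pd(G) + \pd(\Gc) = r + 2 = 1 + \frac{n}{3} + \frac{2}{3}$ with $n = 3r + 1$. I do not expect a serious obstacle here; the two points that require care are confirming that every non-small component of $G$ has order at least $3$ so that Theorem~\ref{n/3boundcor} applies, and checking the $\pd(\Gc) = 1$ step in the case where $G$ has no large component at all, which is exactly the computation that isolates $G = K_2$ as the sole degenerate exception.
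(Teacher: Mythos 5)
Your proposal is correct and follows essentially the same route as the paper: bound $\pd(G)$ by $n_1+n_2+\frac{n-n_1-2n_2}{3}$ via Theorem~\ref{n/3boundcor} applied to the components of order at least $3$, and observe that an isolated vertex (or a vertex of a $K_2$-component) power dominates $\Gc$, giving $\pd(\Gc)=1$. The only difference is that you spell out the forcing step in $\Gc$ and correctly flag the degenerate case $G=K_2$, where $\pd(\Gc)=2$ and the stated inequality actually fails --- an edge case the paper's proof passes over silently.
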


\bpf
As a consequence of Theorem \ref{n/3boundcor},\vspace{-3pt}
 \[ \pd(G) \leq n_1 + n_2 + \left( \frac{n - n_1 - 2n_2}{3} \right).\vspace{-3pt} \]  Because $n_1\geq 1$ or $n_2 \geq 1$, an isolated vertex (respectively, one of the vertices in a $K_2$ component) power dominates the complement, so $\pd(\Gc) = 1$.  Hence,\vspace{-3pt}
\[ \pd(G) + \pd(\Gc) \leq  n_1 + n_2 + \left( \frac{n - n_1 - 2n_2}{3} \right) + 1. \qedhere\]
\epf

We can also improve the upper bound in some special cases.  

\begin{thm}\label{n4sum}
Suppose $G$ is a graph of order $n$ with $\diam (G)=\diam(\Gc)=2$, and one of the following is true: \vspace{-3pt}
\ben
\item $G$ or $\Gc$ is planar.\vspace{-3pt}
\item $\kappa(G)\le 3$  or $\kappa(\Gc)\le 3$. \vspace{-3pt}
\item $G$ or $\Gc$ is not super-$\lambda$.\vspace{-3pt}
\een
If $n\ge 24$, then $\pd (G)+ \pd (\Gc)\leq \lf {n\over 4}\rf+2$, and 
  $\pd (G)+ \pd (\Gc)\leq \lf {n\over 4}\rf+3$ for $n\le 23$. 
\end{thm}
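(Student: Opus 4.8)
The plan is to combine the case analysis from Theorem~\ref{NGsumthm} with the sharper diameter-2 domination bound of Theorem~\ref{domn4} and the ``either $\pd$ is small on one side'' dichotomy of Corollary~\ref{pdle2}. As before, assume without loss of generality $p := \pd(G) \le \pbar := \pd(\Gc)$. Under each of the three hypotheses, Corollary~\ref{pdle2} (parts~\eqref{c1a}, \eqref{c3}, \eqref{c4}) applies because $\diam(G) = \diam(\Gc) = 2$ forces every component of $G$ and $\Gc$ to have order at least $3$ (a component of order $1$ or $2$ would make the diameter $1$ or $\infty$). Hence in every case $p \le 2$, so it suffices to bound $\pbar = \pd(\Gc)$ alone. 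Since $\diam(\Gc) = 2$, Theorem~\ref{domn4} gives $\pbar \le \gamma(\Gc) \le \lf \frac n 4 \rf$ when $n \ge 24$ and $\pbar \le \lf \frac n 4 \rf + 1$ when $n \le 23$. Adding $p \le 2$ yields $\pd(G) + \pd(\Gc) \le \lf \frac n 4 \rf + 2$ for $n \ge 24$ and $\le \lf \frac n 4 \rf + 3$ for $n \le 23$, which is exactly the claim.

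The one point requiring a little care is which side of the dichotomy we land on. Corollary~\ref{pdle2} only guarantees $\pd(G) \le 2$ \emph{or} $\pd(\Gc) \le 2$; it does not say the small one is $\pd(G)$. So I would argue: if $\pd(\Gc) \le 2$ as well, then trivially $p \le \pbar \le 2$ and $\pd(G)+\pd(\Gc) \le 4$, which is below all the stated bounds (for $n \ge 24$, $\lf n/4\rf + 2 \ge 8$; for the smallest relevant $n$ it still holds). Otherwise $\pd(G) \le 2$, and then $\pbar = \pd(\Gc)$ is bounded by $\gamma(\Gc)$ via Theorem~\ref{domn4} as above. Either way the bound follows. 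I should also note that if $G$ or $\Gc$ is not connected the hypothesis $\diam = 2$ fails, so connectivity is automatic and there is no need for a separate disconnected case.

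I do not anticipate a genuine obstacle here — the theorem is essentially an assembly of Corollary~\ref{pdle2} and Theorem~\ref{domn4}. The only mild subtlety is making sure Theorem~\ref{domn4} is being applied to the right graph: it needs $\diam(\Gc) = 2$, which is given by hypothesis, and it controls $\gamma(\Gc)$, which dominates $\pd(\Gc)$ since $\pd \le \gamma$ always. One should double-check the small-$n$ arithmetic, namely that $4 \le \lf n/4 \rf + 3$ holds down to whatever the smallest $n$ is for which a graph with $\diam(G) = \diam(\Gc) = 2$ exists (this needs $n \ge 5$, and $\lf 5/4\rf + 3 = 4$, so it is tight but fine). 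The proof is short; I would write it as: reduce to $p \le 2$ via Corollary~\ref{pdle2}, bound $\pbar$ via Theorem~\ref{domn4} using $\pd(\Gc) \le \gamma(\Gc)$, and add.
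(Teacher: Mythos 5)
Your proposal is correct and follows essentially the same route as the paper: invoke Corollary \ref{pdle2} (noting $\diam=2$ forces connectedness, so the component hypothesis holds) to get $\pd\le 2$ on one side, then bound the other side by $\gamma\le\lf n/4\rf$ (or $\lf n/4\rf+1$ for $n\le 23$) via Theorem \ref{domn4} and $\pd\le\gamma$, and add. Your extra care about which side of the dichotomy is small is handled in the paper by a symmetry/WLOG remark, but the substance is identical.
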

\begin{proof}
By Corollary \ref{pdle2}, $\pd (G)\leq 2$ or $\pd (\Gc)\leq 2$. 
Assume without loss of generality that $\pd (G)\leq 2$. Applying Theorem \ref{domn4} to $\Gc$,   
$\pd (\Gc)\leq \lf {n\over 4}\rf$ for $n\ge 24$ and $ \pd (\Gc)\leq \lf {n\over 4}\rf+1$ for $n\le 23$. %,  so $\pd (G)+ \pd (\Gc)\leq \lf {n\over 4}\rf+3$ (and $\pd (G)+ \pd (\Gc)\leq \lf {n\over 4}\rf+2$ if $n\ge 28$).
\end{proof}

%%%%%%
% note to us: next thm does not follow from previous since does not assume diam 2
%%%%%%

\begin{thm}\label{3regNGsum}
Suppose $G$ is a $3$-regular graph of order $n\ge 6$ such that  no component  is $ K_{3,3}$.  Then $\pd(G)\le \lf \frac{n}{4} \rf$,  $\pd(\Gc) \leq 2$, and  $\pd(G) + \pd(\Gc) \leq \lf \frac{n}{4} \rf + 2$, and all these inequalities are tight for arbitrarily large $n$.
\end{thm}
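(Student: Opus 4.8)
The plan is to handle the three claimed inequalities separately and then exhibit examples showing tightness. First I would establish $\pd(G)\le \lf \frac{n}{4}\rf$: if $G$ is connected, this is immediate from Theorem \ref{3reg} since $G\ne K_{3,3}$; if $G$ is disconnected, each component is itself a connected $3$-regular graph on at least $4$ vertices (every vertex needs degree $3$), none equal to $K_{3,3}$ by hypothesis, so applying Theorem \ref{3reg} componentwise and summing the floors gives $\pd(G)=\sum_i \pd(G_i)\le \sum_i \lf \frac{n_i}{4}\rf\le \lf \frac{n}{4}\rf$. (One should note $K_{3,3}$ is excluded as a component, so the edge case in Theorem \ref{3reg} never arises.)

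Next I would show $\pd(\Gc)\le 2$. Since $G$ is $3$-regular, $\Gc$ is $(n-4)$-regular, so $\delta(G)=3$ and neither $G$ nor $\Gc$ has isolated vertices (for $n\ge 6$, $\Gc$ has minimum degree $n-4\ge 2$). I would aim to invoke Corollary \ref{pdle2}: since $\delta(G)=3$ we have $\kappa(G)\le \delta(G)=3$, so part \eqref{c3} of Corollary \ref{pdle2} would give $\pd(G)\le 2$ or $\pd(\Gc)\le 2$ — \emph{provided} the hypothesis ``every component of $G$ and $\Gc$ has order at least $3$'' holds. Components of $G$ have order at least $4$; for $\Gc$, a component of order $\le 2$ would force very small $n$, and one checks directly that for $n\ge 6$ the complement of a $3$-regular graph has all components of order at least $3$ (indeed $\Gc$ is connected once $n\ge 6$, since a disconnected complement of a regular graph forces $n$ small). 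Then from Corollary \ref{pdle2} we get $\pd(G)\le 2$ or $\pd(\Gc)\le 2$; combined with the first part, $\pd(G)\le\lf\frac n4\rf$, and a short case check (if $\pd(G)\le 2$ then since $G$ is $3$-regular $\Gc$ also has small connectivity $\kappa(\Gc)\le\delta(\Gc)=n-4$, and one still needs $\pd(\Gc)\le 2$ — here I would instead argue directly that any two non-adjacent vertices of $G$ whose closed neighborhoods in $G$ together miss at most... ) — actually the cleanest route is: $\Gc$ has diameter $2$ or $\ge 3$; if $\diam(\Gc)\ge 3$ then $\gamma(G)\le 2$ so $\pd(G)\le 2$, but we want $\pd(\Gc)\le 2$, so instead use $\diam(G)$: if $\diam(G)\ge 3$ then $\gamma(\Gc)\le 2$ by Theorem \ref{diam3+}; if $\diam(G)=2$ then $\kappa(G)\le 3<4$ puts us in Case 1 or Case 2 of Theorem \ref{kappapd}, and Case 2 (every vertex outside the cut-set has $\ge 2$ neighbors in it) is impossible when $|S|=\kappa(G)\le 3$ and $G$ is $3$-regular unless the graph is tiny, so Case 1 applies and $\pd(\Gc)\le 2$. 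I would write this last part carefully as the delicate point.

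The sum bound $\pd(G)+\pd(\Gc)\le \lf\frac n4\rf+2$ is then immediate by adding the two previous bounds. Finally, for tightness of all three inequalities simultaneously I would use the necklace graphs: by Theorem \ref{3reg}, $\pd(N_r)=\lf\frac{n}{4}\rf$ for the $r$th necklace (which has order $n=4r$ and is $3$-regular, connected, and not $K_{3,3}$), and by Lemma \ref{Nrlem}, $\pd(\overline{N_r})=2$; hence $\pd(N_r)+\pd(\overline{N_r})=\lf\frac n4\rf+2$, and letting $r\to\infty$ gives arbitrarily large $n$ attaining all three bounds with equality.

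I expect the main obstacle to be pinning down $\pd(\Gc)\le 2$ rigorously: Corollary \ref{pdle2} only yields the disjunction ``$\pd(G)\le 2$ or $\pd(\Gc)\le 2$,'' and one must combine it with the $3$-regularity to force the conclusion onto $\Gc$ specifically — which requires either a direct domination argument in $\Gc$ (two vertices in $G$ at distance $\ge 3$, or two non-adjacent vertices with disjoint small $G$-neighborhoods, dominate $\Gc$) or a careful case analysis splitting on $\diam(G)$ and invoking Theorems \ref{diam3+} and \ref{kappapd} as sketched above, while separately dispatching the small-order cases $n=6,7,8,9$ (e.g.\ by noting the only $3$-regular graphs there are $K_{3,3}$, $K_4$, the prism, etc., and checking $\Gc$ directly or computationally).
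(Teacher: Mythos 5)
Your overall architecture (componentwise bound for $\pd(G)$, reduction of $\pd(\Gc)\le 2$ to the case $\diam(G)=2$ via Theorem \ref{diam3+}, tightness via $N_r$ and Lemma \ref{Nrlem}) matches the paper, but your treatment of the crucial case $\diam(G)=2$ is genuinely different. The paper uses a Moore-type count: a $3$-regular graph of diameter $2$ has at most $1+3+6=10$ vertices, so it simply lists the five cubic diameter-$2$ graphs (C2, C5, C7, the Petersen graph, and $K_{3,3}$, which is excluded) and verifies $\pd(\Gc)\le 2$ for each by inspection. You instead run the cut-set argument: take a minimum cut-set $S$ with $|S|=\kappa(G)\le\delta(G)=3$ and argue that the ``Case 1'' situation of Theorem \ref{kappapd} must occur, which yields $\pd(\Gc)\le 2$ directly. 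Your route avoids consulting an atlas of cubic graphs and is in that sense more self-contained; the paper's route avoids any cut-set analysis and reduces everything to a finite check. Be aware, though, that you cannot cite the \emph{statement} of Theorem \ref{kappapd} for this (it only gives the disjunction ``$\pd(G)\le\kappa(G)-1$ or $\pd(\Gc)\le 2$,'' and the first alternative is useless to you); you must reuse the Case 1 argument from its proof (a vertex $u$ outside $S$ with exactly one neighbor in $S$, together with a vertex in another component of $G-S$, power dominates $\Gc$), or reprove it.

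The one claim you leave essentially unproved is that Case 2 is ``impossible unless the graph is tiny.'' This must be made quantitative, and it is easy to do: in Case 2 every vertex of $V\setminus S$ sends at least $2$ edges to $S$, while $S$ sends out at most $3|S|\le 9$ edges, so $2(n-|S|)\le 3|S|$ gives $2n\le 5|S|\le 15$, i.e.\ $n\le 7$, hence $n=6$ by parity of cubic graphs. The only cubic graph of order $6$ other than the excluded $K_{3,3}$ is the triangular prism, whose complement is $C_6$ with $\pd(C_6)=1$, so the bound holds there too. With that count in place you do \emph{not} need the separate sweep over $n=6,7,8,9$ that you mention at the end (nor the detour through Corollary \ref{pdle2}, which, as you noticed, only yields a disjunction and cannot be forced onto $\Gc$). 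Alternatively, you could simply borrow the paper's observation that $\diam(G)=2$ and $3$-regularity force $n\le 10$ and finish by inspecting the handful of such graphs. Your tightness argument via $N_r$ is exactly the paper's and is fine.
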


 \bpf
Suppose first that $G$ is connected.  Then $\pd(G)\le\lf\frac n 4 \rf$ by Theorem \ref{3reg} (since  $G\neq K_{3,3}$), so it suffices to show $\pd(\Gc)\le 2$. Since $G\ne K_4$ and $G$ is 3-regular, $\diam(G)\ge 2$.  Since $\diam(G)\ge 3$ implies $\pd(\Gc)\le 2$ by Theorem \ref{diam3+}, we  assume $\diam(G)=2$.  For any vertex $v$, there are at most 10 vertices at distance 0, 1, or 2 from $v$ ($v$, its 3 neighbors, and  two additional neighbors of each of the neighbors of $v$), so  $n\le 10$.  An examination of  3-regular graphs with $6\le n \le 10$  (see, for example, \cite[p. 127]{atlas}) shows the only such graphs  of diameter 2 are the five graphs shown in Figure \ref{fig:cubic} (named as in \cite{atlas}): C3 = $K_{3,3}$, C2, C5, C7, and C27 (the Petersen graph). It is straightforward to verify that $\pd(\Gc)=1$ for $G\in\{$C2, C7$\}$ and $\pd(\Gc)=2$ for $G\in\{$C5, C27$\}$.  
%Since 3-regularity implies $\kappa(G)\le 3$, $\pd(\Gc)=2$ by Theorem  \ref{n4sum}.  
This completes the proof for the case in which $G$ is connected.

\begin{figure}[!ht]
\begin{center}
\scalebox{.5}{\includegraphics{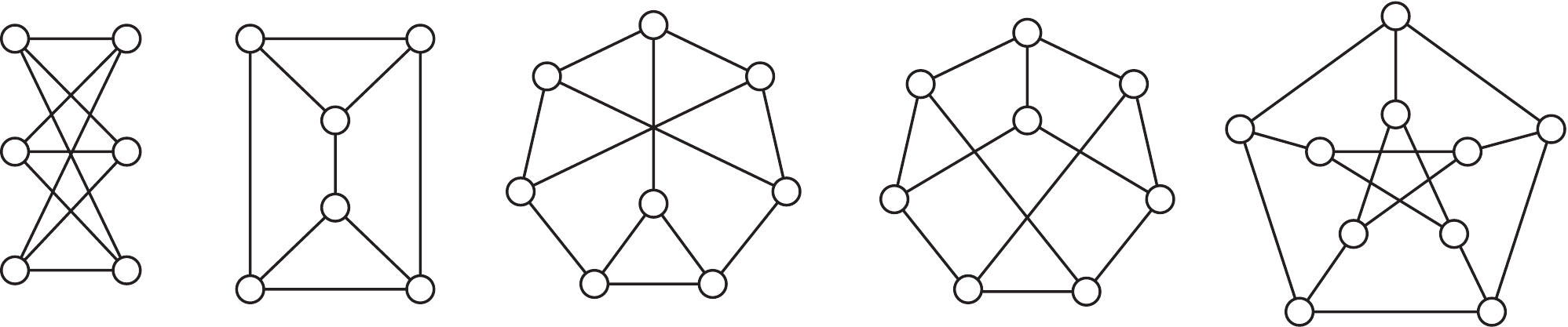}}\vspace{-5pt}
\caption{The five cubic graphs of diameter 2: C3 = $K_{3,3}$, C2, C5, C7, and C27 = the Petersen graph. }\label{fig:cubic}\vspace{-8pt}
 \end{center}
\end{figure}

Now assume $G$ has components $G_1,\dots,G_s$ with $s\ge 2$.  Then $\pd(\Gc)\le 2$ by Theorem \ref{diam3+}.  Since $G_i\ne K_{3,3}$, \vspace{-3pt}
\[\pd(G)=\sum_{i=1}^s \pd(G_i)\le \sum_{i=1}^s\lf \frac{n_i}4\rf\le \lf \frac{\sum_{i=1}^sn_i}4\rf=\lf \frac n 4\rf.\]

The graphs $N_r$ attain the bound by Theorem \ref{3reg} and Lemma \ref{Nrlem}.
\epf

%%%%%%%%%%%%%%%%%%%%%%%%%%%%%%%%%%%%%%%%%

\section{Nordhaus-Gaddum product bounds for power domination}\label{sNGprod}

As with the sum, the tight product lower bound for the power domination number for all graphs $G$ remains unchanged even with the additional requirement that both $G$ and $\Gc$ be  connected (using the path).  In Section \ref{sNGsum}, we achieved a tight sum upper bound for such graphs. However, since this was achieved with $\pd(\Gc)=1$ for both $G$ and $\Gc$ connected, and with $\pd(\Gc)=2$ when each component of both $G$ and $\Gc$ has order at least 3, there are few immediate implications for products (see  Section \ref{sdiscuss} for further discussion of connections between sum and product bounds).  

\begin{quest}\label{ordge3prod} Does there exist a graph $G$  of order $n$ such that all components of $G$ and $\Gc$ have order at least $3$ and $\pd(G) \cdot \pd(\Gc) > 2\lf \frac{n}{3}\rf$?
\end{quest}

\begin{rem} \label{rK3p} If  the answer to Question \ref{ordge3prod} is negative, then the graphs $G=rK_3$ with $r\ge 2$ %(disjoint union of two or more copies of $K_3$) 
show $2\lf \frac{n}{3}\rf$ is a tight  upper bound for the product, because $\pd(G)=\frac n 3$ and $\pd(\Gc)=2$.
\end{rem}

\begin{rem} \label{conprodrem} If  the answer to Question \ref{NGsumconQ} is positive, then such graphs %(disjoint union of two or more copies of $K_3$) 
show $2\lf \frac{n}{3}\rf$ can be attained for arbitrarily large $n$ for the product with both $G$ and $\Gc$ connected.
\end{rem}

We can improve the product bound in certain special cases. % and conjecture this bound for all graphs $G$ such that each component of both $G$ and $\Gc$ has order at least 3. 
The next result follows from  Corollary \ref{pdle2}  and Theorem \ref{n/3boundcor}.

\begin{cor}
Let $G$ be a  graph of order $n$ such that  every component of $G$ and $\Gc$ has order at least $3$.   Then   $\pd(G) \cdot \pd(\Gc) \leq 2\lf \frac{n}{3}\rf$ if at least one of the following is true:\vspace{-3pt}
\ben
%\item $G$ is a cograph.
\item  $\diam(G)\geq 3$ or $\diam(\Gc)\geq 3$.\vspace{-3pt}
\item $G$ or $\Gc$ is planar.\vspace{-3pt}
\item $\kappa(G)\le 3$  or $\kappa(\Gc)\le 3$. \vspace{-3pt}
\item $G$ or $\Gc$ is not super-$\lambda$.\vspace{-3pt}

\een

 \end{cor}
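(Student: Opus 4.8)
The plan is to mirror the structure of the sum-bound result in Theorem~\ref{n4sum}, since the hypotheses here are exactly those that trigger Corollary~\ref{pdle2}. First I would observe that the diameter assumption excludes $\diam(G)=1$ and $\diam(\Gc)=1$, so $\diam(G)=\diam(\Gc)=2$ puts us in the regime where each of conditions (1)--(4) is available through Corollary~\ref{planar2pd}, Theorem~\ref{kappapd}, and Proposition~\ref{superlambda}, respectively. Actually, since the corollary as stated only assumes every component of $G$ and $\Gc$ has order at least $3$ (not diameter $2$), I would invoke Corollary~\ref{pdle2} directly: under any of the four listed conditions, it yields $\pd(G)\le 2$ or $\pd(\Gc)\le 2$.

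The second step is the arithmetic. Assume without loss of generality $\pd(G)\le 2$. By Theorem~\ref{n/3boundcor}, since every component of $\Gc$ has order at least $3$, we have $\pd(\Gc)\le \lf \frac n3\rf$. Multiplying gives $\pd(G)\cdot\pd(\Gc)\le 2\lf\frac n3\rf$, which is exactly the claimed bound. That is essentially the whole argument: there is no case analysis on the value of $p=\pd(G)$ needed here (unlike the sum version), because the product of a quantity bounded by $2$ with a quantity bounded by $\lf n/3\rf$ is immediately bounded by $2\lf n/3\rf$.

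The one point requiring a little care is the edge case where $\pd(G)=1$: then the product is at most $\lf n/3\rf\le 2\lf n/3\rf$, so the bound still holds (indeed with room to spare). I expect the main obstacle — to the extent there is one — is simply making sure Corollary~\ref{pdle2} genuinely applies under the stated hypotheses of this corollary rather than the slightly different hypotheses of Theorem~\ref{n4sum}; but since Corollary~\ref{pdle2} is stated with precisely the assumption ``every component of $G$ and $\Gc$ has order at least $3$'' plus one of the four conditions, this is a direct citation with no gap. So the proof is a two-line consequence of Corollary~\ref{pdle2} and Theorem~\ref{n/3boundcor}, exactly as the sentence preceding the statement advertises.

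\begin{proof}
By Corollary~\ref{pdle2}, under any of the four listed conditions we have $\pd(G)\le 2$ or $\pd(\Gc)\le 2$; assume without loss of generality that $\pd(G)\le 2$. Since every component of $\Gc$ has order at least $3$, Theorem~\ref{n/3boundcor} gives $\pd(\Gc)\le \lf \frac n3\rf$. Therefore $\pd(G)\cdot\pd(\Gc)\le 2\lf \frac n3\rf$.
\end{proof}
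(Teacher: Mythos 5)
Your proposal is correct and matches the paper's intended argument exactly: the paper states this corollary without proof, noting only that it "follows from Corollary \ref{pdle2} and Theorem \ref{n/3boundcor}," which is precisely your two-step derivation (one of $\pd(G),\pd(\Gc)$ is at most $2$, the other is at most $\lf \frac n3\rf$ by the component hypothesis, and multiply).
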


%{\red [We are trying to prove if $\pd(G)\ge 3$ and $\pd(\Gc)\ge 3)$ then $\pd(G)\cdot\pd(\Gc)\le n-\gamma(G)-\gamma(G)+1\le n-\sqrt n$.  We need that $\gamma(G)=\pd(G)$ (and all components both order at least 3) implies $\pd(\Gc)\le 2$.]}

%\begin{quest}\label{connprod} If $G$ is a  graph of order $n$ such that both $G$ and $\Gc$ are connected, then what is a tight upper bound for $\pd(G) \cdot \pd(\Gc) $?  \end{quest}

%\begin{rem} We have $\pd(G) \cdot \pd(\Gc)\le 2\lf \frac{n}{3}\rf+2$ for $n=8$ and $\pd(G) \cdot \pd(\Gc)= \lf \frac{n}{3}\rf+1$ for $n=9, 10$.
%\end{rem}

The next two results are  product analogs of Theorems \ref{smallcomp} and  \ref{n4sum}.  The proofs, which are analogous, are omitted.

\begin{thm}
Let $G$ be a graph of order $n$ that has $n_1$ isolated vertices and $n_2$ copies of $K_2$ as components such that   $n_1$ and $n_2$ are not both zero.  Then\vspace{-3pt}
 \[ \pd(G) \cdot \pd(\Gc) \leq \frac{n}{3} + \frac{2n_1}{3} + \frac{n_2}{3}.\vspace{-3pt} \]
\end{thm}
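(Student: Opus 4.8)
The plan is to mirror the proof of Theorem~\ref{smallcomp}, replacing the additive sum estimate by the multiplicative one. First I would invoke Theorem~\ref{n/3boundcor} componentwise on $G$: since $G$ has $n_1$ isolated vertices, $n_2$ components equal to $K_2$, and the remaining components all of order at least $3$ on a total of $n-n_1-2n_2$ vertices, each isolated vertex contributes $1$, each $K_2$ contributes $1$, and the rest contribute at most $\lf (n-n_1-2n_2)/3\rf$, giving
\[ \pd(G)\le n_1+n_2+\frac{n-n_1-2n_2}{3}. \]
Next, because $n_1\ge 1$ or $n_2\ge 1$, an isolated vertex (or one endpoint of a $K_2$ component) is adjacent in $\Gc$ to every other vertex, hence is a dominating — thus power dominating — set for $\Gc$, so $\pd(\Gc)=1$. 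Multiplying, $\pd(G)\cdot\pd(\Gc)=\pd(G)\le n_1+n_2+(n-n_1-2n_2)/3 = n/3 + 2n_1/3 + n_2/3$, which is exactly the claimed bound.

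Since the statement explicitly says the proof is ``analogous'' to that of Theorem~\ref{smallcomp} and is to be omitted, there is essentially no obstacle: the only difference between the sum and product versions is the absence of the ``$+1$'' from $\pd(\Gc)$, since in the product we multiply by $1$ rather than add it. I would therefore simply note that the argument is identical to that of Theorem~\ref{smallcomp} with $\pd(G)\cdot\pd(\Gc)=\pd(G)\cdot 1$ in place of $\pd(G)+\pd(\Gc)=\pd(G)+1$. The one point worth a moment's care is confirming $\pd(\Gc)=1$ (not just $\gamma(\Gc)$ small): this follows because a single vertex that is isolated in $G$ dominates all of $\Gc$ in one step, and $\pd\le\gamma$ with $\gamma(\Gc)\le 1$ forcing $\pd(\Gc)=1$; likewise if instead $n_2\ge1$, pick an endpoint $v$ of a $K_2$ component, whose only $G$-neighbor is the other endpoint $w$, so in $\Gc$ the closed neighborhood of $v$ is $V(\Gc)\setminus\{w\}$ and then $w$ is forced (it has a unique white neighbor among its $\Gc$-neighbors once everything else is blue), or more simply $v$ together with nothing already dominates all but $w$ and any vertex forces $w$.

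In LaTeX the write-up is just one or two sentences, so I would present it as: apply Theorem~\ref{n/3boundcor} to obtain the displayed bound on $\pd(G)$, observe $\pd(\Gc)=1$ as in the proof of Theorem~\ref{smallcomp}, and conclude by taking the product. No new case analysis, no computer search, and no appeal to the diameter-$2$ machinery of Section~\ref{stools} is needed here — the hypotheses about $n_1,n_2$ already pin down $\pd(\Gc)$ completely, which is what makes both this theorem and its sum analog so short.
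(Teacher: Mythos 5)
Your proposal is correct and is essentially the proof the paper intends: the authors omit it precisely because it is the argument of Theorem~\ref{smallcomp} verbatim, namely $\pd(G)\le n_1+n_2+\frac{n-n_1-2n_2}{3}$ from Theorem~\ref{n/3boundcor} together with $\pd(\Gc)=1$ from an isolated vertex or a $K_2$-endpoint, with the product replacing the sum. Nothing further is needed.
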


\begin{thm}
Suppose $G$ is a graph of order $n$ with $\diam (G)=\diam(\Gc)=2$, and one of the following is true: \vspace{-3pt}
\ben
\item $G$ or $\Gc$ is planar.\vspace{-3pt}
\item $\kappa(G)\le 3$  or $\kappa(\Gc)\le 3$. \vspace{-3pt}
\item $G$ or $\Gc$ is not super-$\lambda$.\vspace{-3pt}
\een
If $n\ge 24$, then $\pd (G)\cdot \pd (\Gc)\leq 2\lf {n\over 4}\rf$, and $ \pd(G)\cdot \pd (\Gc)\leq 2\lf {n\over 4}\rf+2$ for $n\le 23$. 
\end{thm}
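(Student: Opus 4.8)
The plan is to carry out the product analog of the proof of Theorem~\ref{n4sum}, so the argument will be short.

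First I would observe that $\diam(G)=\diam(\Gc)=2$ forces both $G$ and $\Gc$ to be connected, and that a graph of diameter $2$ has order at least $3$; hence every component of $G$ and of $\Gc$ has order at least $3$, and the hypothesis of Corollary~\ref{pdle2} is met. Under hypothesis~(1), (2), or (3) of the theorem, part~\eqref{c1a}, \eqref{c3}, or \eqref{c4} of Corollary~\ref{pdle2} (respectively) yields $\pd(G)\le 2$ or $\pd(\Gc)\le 2$; these are precisely the planar, small-vertex-connectivity, and non-super-$\lambda$ cases, and they apply here because we are in the $\diam=2$ setting (they trace back to Corollary~\ref{planar2pd}, Theorem~\ref{kappapd}, and Proposition~\ref{superlambda}).

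Next I would assume, without loss of generality, that $\pd(G)\le 2$. Since $\diam(\Gc)=2$, Theorem~\ref{domn4} applied to $\Gc$ gives $\gamma(\Gc)\le\lf n/4\rf$ for $n\ge 24$ and $\gamma(\Gc)\le\lf n/4\rf+1$ for $n\le 23$; as $\pd(\Gc)\le\gamma(\Gc)$, the same bounds hold for $\pd(\Gc)$. Multiplying by $\pd(G)\le 2$ (both factors being nonnegative) then gives $\pd(G)\cdot\pd(\Gc)\le 2\lf n/4\rf$ when $n\ge 24$, and $\pd(G)\cdot\pd(\Gc)\le 2\bigl(\lf n/4\rf+1\bigr)=2\lf n/4\rf+2$ when $n\le 23$, which is the claim.

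I do not anticipate a genuine obstacle: the entire content beyond the cited results is the trivial remark that $a\le 2$ and $b\le M$ imply $ab\le 2M$. The only step worth a second look is checking that the diameter-$2$ hypothesis makes ``every component of $G$ and $\Gc$ has order at least $3$'' automatic, so that the three relevant sub-cases of Corollary~\ref{pdle2} may legitimately be invoked; with that in hand, the proof is routine.
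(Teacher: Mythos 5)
Your proposal is correct and follows exactly the route the paper intends: the paper omits this proof as ``analogous'' to that of Theorem~\ref{n4sum}, and you have carried out precisely that analog --- Corollary~\ref{pdle2} (legitimately invoked, since $\diam(G)=\diam(\Gc)=2$ forces both to be connected of order at least $3$) gives $\pd(G)\le 2$ or $\pd(\Gc)\le 2$, and Theorem~\ref{domn4} applied to the other graph, together with $\pd\le\gamma$, yields the stated product bounds upon multiplying. No gaps; this matches the paper's argument.
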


The next result follows immediately from Theorem  \ref{3regNGsum}.
\begin{cor}
Suppose $G$ is a $3$-regular graph of order $n\ge 6$ with no $ K_{3,3}$ component.  Then $\pd(G) \cdot \pd(\Gc) \leq 2\lf \frac{n}{4} \rf $, and this bound is attained for arbitrarily large $n$.
\end{cor}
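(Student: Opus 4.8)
The plan is simply to combine the two separate bounds already packaged in Theorem~\ref{3regNGsum}. That theorem asserts that for a $3$-regular graph $G$ of order $n\ge 6$ with no $K_{3,3}$ component we have both $\pd(G)\le\lf\frac n4\rf$ and $\pd(\Gc)\le 2$. Since the power domination number of any graph is a positive integer, multiplying these two inequalities term by term immediately gives $\pd(G)\cdot\pd(\Gc)\le 2\lf\frac n4\rf$. No extremal case analysis is required here: unlike the sum statement in Theorem~\ref{3regNGsum}, where one must rule out simultaneous near-equality in both summands, the constant bound $\pd(\Gc)\le 2$ already does all of the balancing for the product, so one factor contributes at most $\lf\frac n4\rf$ and the other at most $2$, with nothing further to check.

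For the tightness claim I would reuse the same extremal family as in Theorem~\ref{3regNGsum}, namely the necklace graphs $N_r$ with $r\ge 2$, which are $3$-regular of order $n=4r$ and have no $K_{3,3}$ component. By Theorem~\ref{3reg}, $\pd(N_r)=\lf\frac n4\rf=r$, and by Lemma~\ref{Nrlem}, $\pd(\overline{N_r})=2$; hence $\pd(N_r)\cdot\pd(\overline{N_r})=2r=2\lf\frac n4\rf$. As $r$ is arbitrary, the bound is attained for arbitrarily large $n$.

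Honestly, there is no real obstacle at the level of this corollary: all of the substance was already cleared inside Theorem~\ref{3regNGsum} (the connected case there reduces, via the diameter-$2$ restriction, to inspecting the five cubic graphs of diameter $2$ in Figure~\ref{fig:cubic}, and the disconnected case is handled componentwise using $\lf\frac{n_i}{4}\rf$ superadditivity). The only thing worth stating explicitly in the write-up is that the product bound $2\lf\frac n4\rf$ is a genuine consequence of the two individual bounds, so that a reader is not left wondering whether it requires the sharper sum bound $\lf\frac n4\rf+2$ from Theorem~\ref{3regNGsum}.
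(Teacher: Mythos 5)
Your proposal is correct and matches the paper's argument: the paper derives this corollary immediately from Theorem~\ref{3regNGsum} by multiplying the bounds $\pd(G)\le\lf\frac n4\rf$ and $\pd(\Gc)\le 2$, with tightness witnessed by the necklace graphs $N_r$ via Theorem~\ref{3reg} and Lemma~\ref{Nrlem}, exactly as you do. Nothing is missing.
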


%The next result is the product analog of Proposition \ref{treesum}, and the proof is analogous.

\begin{prop}
Let $G$ be a tree on $n \ge 4$ vertices. If $G$ is not $K_{1,3}$ or $K_{1,4}$, then\vspace{-3pt}
 \[\pd(G) \cdot\pd(\Gc)\le \lf\frac{n}{3}\rf \vspace{-3pt}\] and this bound is attained for arbitrarily large $n$.
\end{prop}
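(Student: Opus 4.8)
The plan is to handle the two factors separately and then combine. For a tree $G$ on $n\ge 4$ vertices, $G$ is connected with $\diam(G)\ge 2$, so we first ask whether $\diam(G)\ge 3$: if so, Theorem~\ref{diam3+} gives $\gamma(\Gc)\le 2$, hence $\pd(\Gc)\le 2$; combined with $\pd(G)\le \lf n/3\rf$ from Theorem~\ref{n/3boundcor} this is not quite enough by itself, so the real work is to show $\pd(G)$ is much smaller than $n/3$ for most trees, or that $\pd(\Gc)=1$. The key observation is that a tree has many leaves that are twins (two leaves adjacent to a common vertex) or at least a leaf adjacent to a vertex of degree $\ge 2$; I would use Lemma~\ref{foliatecomplem}-type reasoning: if $G$ has a vertex $v$ with a leaf neighbor $v'$ and another leaf neighbor $v''$ (a ``cherry''), and some vertex $u\in N(v)$ and some vertex $w\notin N(v)$ exist, then $\pd(\Gc)=1$. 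Since any tree on $n\ge 4$ vertices other than a path or a star has such a configuration, and for such trees $\diam(G)\ge 3$ gives $\gamma(\Gc)\le 2$ anyway, the bound $\pd(G)\cdot\pd(\Gc)\le 2\lf n/3\rf$ is far from tight; we need the sharper $\lf n/3\rf$.

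Concretely I would split into cases on the structure of the tree. \textbf{Case 1: $\pd(\Gc)=1$.} Then the claim reduces to $\pd(G)\le\lf n/3\rf$, which is Theorem~\ref{n/3boundcor} provided every component of $G$ has order $\ge 3$ — true since $G$ is a tree on $n\ge 4$ vertices. This case covers all trees that are not paths and not stars $K_{1,m}$, via a cherry argument as above, plus possibly some stars: for $G=K_{1,m}$ with $m\ge 2$ we have $\pd(G)=1$, so the product is $\pd(\Gc)$, and $\pd(\Gc)\le\lf n/3\rf$ is needed; the excluded graphs $K_{1,3},K_{1,4}$ are exactly where $\pd(\Gc)>\lf n/3\rf$ might fail (for $K_{1,3}$, $n=4$, $\lf n/3\rf=1$, and $\Gc=K_1\cup K_3$ has $\pd(\Gc)=1$... so actually one must check these small cases carefully — the exclusions suggest $\pd(\Gc)=2>1=\lf 4/3\rf$ for one of them and similarly $K_{1,4}$). \textbf{Case 2: $G=P_n$.} Then $\pd(P_n)=1=\pd(\overline{P_n})$ as noted in the introduction, so the product is $1\le\lf n/3\rf$ for $n\ge 4$ (indeed $n\ge 3$). \textbf{Case 3: $G=K_{1,m}$, $m\ge 3$, $m\ne 3,4$.} Here $\pd(G)=1$ and $\Gc=K_1\cup K_m$; one power dominating set of $\Gc$ is a single vertex of the $K_m$ part together with... actually $\pd(K_1\cup K_m)=2$ for $m\ge 2$ (the isolated vertex needs to be in any power dominating set, and $K_m$ needs one vertex), wait — the isolated vertex is its own component so must be chosen, and $K_m$ needs one more, giving $\pd(\Gc)=2$; then the product is $2\le\lf m/3\rf=\lf(n-1)/3\rf$ iff $n-1\ge 6$ iff $n\ge 7$, i.e. $m\ge 6$; so $m=3,4,5$ need separate attention, and $K_{1,3},K_{1,4}$ are excluded — leaving $K_{1,5}$ ($n=6$, product $=2\le 2=\lf 6/3\rf$, OK).

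For tightness, I would exhibit a family of trees with product exactly $\lf n/3\rf$ for arbitrarily large $n$: take $G$ to be a ``spider'' or caterpillar where $\pd(G)=1$ and $\pd(\Gc)=\lf n/3\rf$, or more naturally a tree $T\in\maxpd$ built on a path — for $H=P_k$, the tree in $\maxpd$ (if we only add one pendant per vertex, i.e., a comb $G_k$) has $\pd(G_k)=\lceil k/3\rceil$ and $\pd(\overline{G_k})=1$, giving product $\lceil k/3\rceil = \lf n/6\rf$, which is too small; instead a subdivided star or the graph obtained by attaching a path of length 2 to each leaf of a star gives a tree with $\pd=\lf n/3\rf$ and complement power domination number 1. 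The main obstacle is identifying the correct extremal family and nailing the finitely many small cases ($n=4,5,6$ and the stars $K_{1,3},K_{1,4},K_{1,5}$), since the stated exclusions show the inequality genuinely fails for $K_{1,3}$ and $K_{1,4}$; everything else follows cleanly from Theorem~\ref{n/3boundcor} once $\pd(\Gc)\le 1$ or $\pd(G)=1$ is established, which is where Lemma~\ref{foliatecomplem} and Theorem~\ref{diam3+} do the heavy lifting.
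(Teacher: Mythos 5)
There are two genuine gaps. First, your Case 1 rests on the claim that every tree on $n\ge 4$ vertices other than a path or a star contains a ``cherry'' (a vertex with two leaf neighbors) and that this yields $\pd(\Gc)=1$. The claim is false: the spider obtained by subdividing every edge of $K_{1,3}$ (so $n=7$) is neither a path nor a star, yet no vertex is adjacent to two leaves, and the same holds for any tree in which each support vertex supports exactly one leaf. For such trees your argument only gives $\pd(\Gc)\le 2$ via Theorem \ref{diam3+}, hence only $\pd(G)\cdot\pd(\Gc)\le 2\lf\frac n3\rf$, not the claimed bound. (Note also that Lemma \ref{foliatecomplem} as stated requires $v'$ and $v''$ to be adjacent, which is impossible in a tree, so even where a cherry exists you would need a modified argument.) The paper closes this case in one line with Proposition \ref{GBarBd1}: if a tree is not a star, then $\Gc$ is connected, so neither $G$ nor $\Gc$ has isolated vertices, and since $\delta(G)=1$ we get $\pd(\Gc)=1$; combined with $\pd(G)\le\lf\frac n3\rf$ from Theorem \ref{n/3boundcor} this handles every non-star tree. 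Your treatment of paths and of stars (product $=2\le\lf\frac n3\rf$ exactly when $n\ge 6$, which is precisely why $K_{1,3}$ and $K_{1,4}$ are excluded) agrees with the paper, apart from the momentary use of $\lf\frac m3\rf$ in place of $\lf\frac n3\rf$, which you effectively corrected by checking $K_{1,5}$ directly.

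Second, the tightness claim is not established. You rightly discard the comb (product roughly $\frac n6$), but your proposed replacements also fail: for the tree obtained by attaching a path with two edges to each leaf of a star (a spider with legs of length three), the center alone is a power dominating set---it dominates the whole first layer, after which forcing propagates down each leg---so $\pd(G)=1$, not $\lf\frac n3\rf$; the subdivided star likewise has $\pd=1$. The family the paper uses, and which Theorem \ref{n/3boundcor} essentially dictates because trees attaining $\pd=\frac n3$ must have their components in $\maxpd$, is: take an arbitrary tree $T$ and attach two (nonadjacent) new leaves to each vertex of $T$. The resulting tree $G$ has $\pd(G)=\frac n3$, and $\pd(\Gc)=1$ by the same Proposition \ref{GBarBd1} argument, so the product equals $\frac n3$ for arbitrarily large $n$.
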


\bpf Note first that since $G$ is connected, $\pd(G) \le \lf\frac{n}{3}\rf$ by Theorem \ref{n/3boundcor}.   If a tree is not   a star, then its complement is also connected, and by Proposition \ref{GBarBd1}, $\pd(\Gc) = 1$.    For a star graph $K_{1,n-1}$, we have $\pd(K_{1,n-1})\cdot\pd(\overline{K_{1,n-1}})=2$, which is less than or equal to $\frac{n}{3}$ when $n \ge 6$. 
 The bound is attained for arbitrarily large $n$ because if $G$ is constructed from any tree $T$    by adding two leaves to each vertex of $T$, then $\pd(G)=\frac n 3$.
\epf

%%%%%%%%%%%%%%%%%%%%%%%%%%%%%%%%%%%%%%%%%
%\newpage
\section{Summary and discussion}\label{sdiscuss}

Table \ref{sumtab} summarizes what is known about Nordhaus-Gaddum sum bounds for power domination number, domination number, and zero forcing number.  

\begin{table}[h!]\caption{Summary of tight bounds for $\zeta(G)+\zeta(\Gc)$ for $\zeta = \pd, \gamma, \Z$}\label{sumtab}
\begin{center} 
\renewcommand{\arraystretch}{1.3}
\noindent \begin{tabular}{|c|c|c|}
\hline 
$\zeta$ \& restrictions &  lower &  upper \\
\hline 
\hline 
$\pd$ & $2$ &  $n+1$\\
\hline
$\pd$ \& all components  of both $G$ and $\Gc$ of order $\ge 3$ \& $n\ge  21$  & $2$   & $\lf \frac n 3 \rf +2$\\ %[.2mm]
\hline 
$\pd$ \& both $G$ and $\Gc$ connected \& $n\ge  25$  & $2$   &  $\lc \frac n 3 \rc +1$\\ %[.4mm]
\hline
\hline 
$\gamma$ \& $n\ge 2$ & $3$   & $n+1$\\
\hline
$\gamma$ \& both $G$ and $\Gc$ connected %all components order $\ge 2$ 
& $3$   & $\lf \frac n 2 \rf +2$\\[.4mm]
\hline 
\hline 
$\Z$ \& $n\ge 2$ & $n-2$  & $2n-1$\\
\hline $\Z$ \& both connected  & $n-2$  & $ 2n-o(n)$\\
\hline 
\hline 
\end{tabular}\vspace{-8pt}
\end{center}
\end{table}

Both the sum and product upper and lower bounds for the domination number were determined by Jaeger and Payan in 1972 (see Theorem \ref{NGdom}), and analogous bounds for power domination are immediate corollaries.  Since then, there have been numerous improvements to the sum upper bound for domination number under various conditions on $G$ and $\Gc$.  Examples of such conditions include requiring every component of both   $G$ and $\Gc$ to have order at least 2 or requiring both to be connected %(see Theorem \ref {NGdomJA95}), 
or requiring both to have minimum degree at least  7. % (see Theorem  \ref{n3dom}).  
In Section \ref{sNGsum} we established   better upper bounds for the power domination number in the cases where both $G$ and $\Gc$ are connected or both  have every component of order at least 3.   

 By contrast, results on products are very sparse for both domination number and power domination number.  Historically, the Nordhaus-Gaddum sum upper bound has often been determined first, and then used to obtain the product upper bound, as in 
 the case of Nordhaus and Gaddum's original results \cite{NG} (see Theorem  \ref{NGthm}).  In order to use this technique of getting a tight product bound from a tight sum bound, one needs the sum upper bound to be optimized with approximately equal values or the sum lower bound to be optimized on extreme values. The sum lower bound for the domination number  is optimized at the extreme values, and therefore the tight lower bound for the sum yields a tight lower bound for the product.  However, all available evidence suggests that, for both the domination number and  the power domination number, the sum upper bound is optimized only  at extreme values.  For example,  the sum upper bound of $n+1$ over all graphs  is attained only by the values $1$ and $n$ for both the domination and power domination numbers. 
  Thus, for the domination number and the power domination number, the Nordhaus-Gaddum product  upper bound  presents  challenges.

Further evidence indicating that the sum bound is optimized only on extreme values comes from random graphs.  %In addition to the extreme behavior considered with graphs that realize the Nordhaus-Gaddum bounds, 
And  it is also interesting to consider the ``average'" behavior, or expected value, of the sum and product of $\Z, \gamma$, and $\pd$ using  the Erd\H{o}s R\'enyi random graph $G(n,\frac 1 2)$ (whose complement is also a random graph with edge probability $\frac 1 2$).  
Let $G=G(n,\frac 1 2)$.  Then $\Z(G)=n-o(n)$, since  $\tw(G)=n-o(n)$ \cite{PS14} and $\tw(H)\le \Z(H)\le n$ for all graphs of order $n$ ($\tw(H)$ denotes the tree-width of $H$).  Thus $\Z(G)+\Z(\Gc)= 2n-o(n)$ and $\Z(G)\cdot\Z(\Gc)= n^2-o(n^2)$, and this establishes that the upper bound listed in Table \ref{sumtab} for  connected graphs $G$ and $\Gc$.
%Let $G=G(n,\frac 1 2)$.  
 For any $\epsilon>0$, $(1-\epsilon)\log_2n\le\gamma(G)\le (1+\epsilon)\log_2n$ with probability going to 1 as $n\to\infty$ \cite{NS94, WG01}.  Thus    $\gamma(G)+\gamma(\Gc)=2\log_2n\pm o(\log_2 n)$ and  $\gamma(G)\cdot\gamma(\Gc)=(\log_2n\pm o(\log_2 n))^2$ for $G=G(n,\frac 1 2)$.
Since $\pd(H)\le\gamma(H)$ for all graphs $H$, $\pd(G)\le \log_2n+o(\log_2 n)<<\lc \frac n 3 \rc +1$  for $G=G(n,\frac 1 2)$ as $n\to\infty$ (observe that $G$ and $\Gc$ are both connected with probability approaching 1 as $n\to \infty$).

%%%%%%%%%%%%%%%%%%%%%%%%%%%%%%%%%%%%%%%%%

%%%%%%%%%%%%%%%%%%%%%%%%%%%%%%%%

\subsection*{Acknowledgements}
This research was supported by the American Institute of Mathematics (AIM), the  Institute for Computational and Experimental Research in  Mathematics (ICERM), and the National Science Foundation (NSF) through DMS-1239280, and the authors thank AIM, ICERM, and NSF. We also thank  S. Arumugam for sharing a paper with us via email that was very helpful, and  Brian Wissman for fruitful  discussions  about preliminary work and for providing the {\em Sage} code for power domination.

%%%%%%%%%%%%%%%%%%%%%%%%%%%%%%%%%%%%%%%%%

%\bibitem{V11} L. Volkmann. Nordhaus-Gaddum type results for domination sums in graphs with minimum degree at least four, five or six. {\em Util. Math.} 85: 113--128, 2011.
%\bibitem{O09} K.D. \!Owens. Properties of the zero forcing number. Master's Thesis, Brigham Young University, 2009.  Available at \url{http://scholarsarchive.byu.edu/cgi/viewcontent.cgi?article=3215&context=etd}.

%\bibitem{PCW10} K.-J. Pai, J.-M. Chang, Y.-L. Wang.  Restricted power domination and fault-tolerant power domination on grids.  {\em Discrete Appl. Math.} 158: 1079--1089, 2010.

%\bibitem{SRRGW15}	S. Stephen, B. Rajana, J. Ryan, C. Grigorious, and A. William. Power domination in certain chemical structures. {\em J. Discrete Algorithms}, 15: 519--529, 2002.  

%\bibitem{V11} S. Varghese. Studies on some generalizations of line graphs and the power domination problem in graphs. Thesis submitted to the. Diss. Cochin University of Science and Technology, 2011.

%\bibitem{WG01} B. Wieland, A. P. Godbole. On the domination number of a random graph. {\em Electron. J. Combin.} 8, Research Paper 37, 13 pp., 2001.

\end{document}